\newcommand{\EE}{\mathbb{E}}
\newcommand{\RR}{\mathbb{R}}
\newcommand{\Pro}{\mathbb{P}}
\newcommand{\Nbb}{\mathbb{N}}
\newcommand{\LL}{\mathcal{L}}
\newcommand{\CC}{\mathcal{C}}
\newcommand{\Ecal}{\mathcal{E}}
\newcommand{\FF}{\mathcal{F}}
\newcommand{\HH}{\mathcal{H}}
\newcommand{\Hbb}{\mathbb{H}}
\newcommand{\MM}{\mathcal{M}}
\newcommand{\Scal}{\mathcal{S}}
\newcommand{\NN}{\mathcal{N}}
\newcommand{\XX}{\mathcal{X}}
\newcommand{\YY}{\mathcal{Y}}
\newcommand{\Xbb}{\mathbb{X}}
\newcommand{\norm}[1]{\left\Vert #1\right\Vert}
\newcommand{\floor}[1]{\lfloor #1\rfloor}
\renewcommand{\b}[1]{\left\{ #1\right\}}
\renewcommand{\c}[1]{\left[ #1\right]}
\newcommand{\p}[1]{\left( #1\right)}
\newcommand{\abs}[1]{\left| #1\right|}
\newcommand{\iid}{\overset{i.i.d}{\sim}}
\newcommand{\iidt}{\textit{i.i.d }}
\newcommand{\ind}[1]{\mathbbm{1}_{#1}}
\newcommand{\inner}[1]{\left\langle#1\right\rangle}
\DeclareMathOperator{\Span}{span}
\DeclareMathOperator{\Tr}{Tr}
\DeclareMathOperator{\argmin}{argmin}
\DeclareMathOperator{\Diag}{Diag}
\DeclareMathOperator{\supp}{supp}
\newcommand{\rk}{rk}
\newcommand{\vertiii}[1]{{\left\vert\kern-0.25ex\left\vert\kern-0.25ex\left\vert #1 
		\right\vert\kern-0.25ex\right\vert\kern-0.25ex\right\vert}}
\newcommand{\essinf}{ess~inf}
\newcommand{\esssup}{ess~sup}
\newtheorem{theorem}{Theorem}
\newtheorem{lemma}{Lemma}[theorem]
\newtheorem{definition}{Definition}
\newtheorem{proposition}{Proposition}[theorem]
\newtheorem{corollary}{Corollary}[theorem]
\newtheorem{remark}{remark}
\newtheorem{model}{Model}
\newtheorem{assumption}{Assumption}
\title{$L^2-$posterior contraction rates for Gaussian process and random series priors in Bayesian nonparametric regression models}
\author{Paul Rosa \\ University of Cambridge}
\date{\today}
\begin{document}
	
	\maketitle
	
	\begin{abstract}
		The nonparametric regression model with normal errors has been extensively studied, both from the frequentist and Bayesian viewpoint. A central result in Bayesian nonparametrics is that under assumptions on the prior, the data-generating distribution (assuming a true frequentist model) and a semi-metric $\rho(.,.)$ on the space of regression functions that satisfy the so called testing condition, the posterior contracts around the true distribution with respect to $\rho(.,.)$, and the rate of contraction can be estimated. In the regression setting, the semi-metric $\rho(.,.)$ is often taken to be the Hellinger distance or the empirical $L^2$ norm (i.e., the $L^2$ norm with respect to the empirical distribution of the design) in the present regression context. However, extending contraction rates to the ``integrated" $L^2$ norm usually requires more work, and has previously been done for instance under sufficient smoothness or boundedness assumptions, which may not necessarily hold. In this work we show that, for classes of priors based on random basis expansions or Gaussian processes with RKHS of Sobolev type and in the random design setting, such $L^2$ posterior contraction rates can be obtained under substantially weaker assumptions than those currently used in the literature. Importantly we do not require a known a priori upper bound on its supremum norm or that its smoothness is larger than $d/2$, where $d$ is the dimension of the covariates. Our proof crucially relies on an application of the matrix Bernstein concentration inequality to empirical inner product matrices, which require explicit upper bounds on the basis functions at hand that we prove in several cases of interest. In particular we obtain upper bounds on the supremum norm of Mercer eigenfunctions of several reproducing kernels (including several Matérn kernels) which are of independent interest.
	\end{abstract}
	
	\tableofcontents
	
	\section{Introduction}\label{section:intro}
	
	In this paper we consider the following nonparametric regression model with random design :
	
	\begin{model}\label{model:nonparametric_regression}
		\[
		y_i = f_0(x_i) + \varepsilon_i, \quad i = 1,\ldots,n,
		\]
		where $n \geq 1, \quad \p{x_i,y_i} \iid P_0$ on $\XX \times \RR$ for some input domain $\XX$. We write $f_0(x) := E_0 \c{y|x}$ for the regression function of $y$ over $x$ and $\mu_0(dx) := \int_{y \in \RR} P_0\p{dxdy}$ the marginal distribution of $x$.
	\end{model}
	
	Model \ref{model:nonparametric_regression} is fundamental in modern statistics. Numerous estimation procedures have been proposed in order to recover the unknown regression function $f_0 \in \FF \subset \RR^\XX$ based only on the observations $\Xbb^n = \p{X_i,Y_i}_{i=1}^n$. Among them, Bayesian nonparametric methods have received much attention in recent years since the development of the general testing approach for quantification of posterior contraction rates in generic, potentially non-conjugate settings as initiated in the seminal works of \cite{ghosalConvergenceRatesPosterior2000} \& \cite{ghosalConvergenceRatesPosterior2007}, with applications to model \ref{model:nonparametric_regression} found for instance in \cite{castilloThomasBayesWalk2014,vaartRatesContractionPosterior2008,vaartInformationRatesNonparametric2011} or \cite{shenAdaptiveBayesianProcedures2015}. More precisely, under the frequentist model \ref{model:nonparametric_regression}, given a prior $\Pi$ on $\FF$ and a semi-metric $\rho$ on $\FF$, we say that a sequence $\varepsilon_n > 0$ is a \textit{posterior contraction rate} at $f_0$ with respect to $\rho$ if, for some $M>0$,
	\begin{equation}\label{def:posterior_contraction_rate}
		\EE_0 \Pi \c{ f : \rho\p{f,f_0} > M\varepsilon_n | \Xbb^n } \xrightarrow[n \to \infty]{} 0,
	\end{equation}
	where $\EE_0$ denotes the expectation under the probability distribution $\Pro_0$ of $\Xbb^n$ and $\Pi \c{\cdot | \Xbb^n}$ the posterior distribution which in this setting is given by Baye's formula
	\begin{equation}\label{eqn: Bayes_formula}
		\Pi \c{df | \Xbb^n} = \frac{\prod_{i=1}^n L\p{Y_i | f(X_i)} \Pi\c{df}}{\int_\FF \prod_{i=1}^n L\p{Y_i | f(X_i)} \Pi\c{df}},
	\end{equation}
	for some likelihood $L$. The testing approach then provides sufficient conditions on the relationship between $\Pi,\rho$ and $f_0$ in order to prove the existence of $\varepsilon_n$ satisfying \ref{def:posterior_contraction_rate}. It relies on a prior mass condition (a lower bound on the prior probability of neighbourhoods of $f_0$ formulated in the Kullback-Leibler sense), the existence of a subset $\FF_n \subset \FF$ containing most of the prior probability and the existence of a test $\phi_n : \Xbb^n \to \phi_n\p{\Xbb^n} \in \c{0,1}$ between the null hypothesis $f = f_0$ and the alternative $f \in \FF_n \cap \b{\rho(f,,f_0) > M\varepsilon_n}$ having appropriately decaying type I and type II errors, see\cite{ghosalFundamentalsNonparametricBayesian2017} or \cite{castilloBayesianNonparametricStatistics2024} for a recent introduction to the theory. The existence of the test $\phi_n$ is often deduced from the control of the $\rho-$metric entropy of $\FF_n$ and a structural requirement on $\rho$ commonly referred to as the \textit{testing condition} : for some fixed $A,K,\xi > 0$ for any $f_1 \in \FF$ with $\rho(f_0,f_1) > \varepsilon$, there exists a test $\phi$ satisfying
	\begin{equation}\label{eqn:testing_condition}
		\EE_{f_0} \c{\phi} \leq Ae^{-Kn\varepsilon^2} \text{ and } \sup_{\rho(f,f_1) < \xi \varepsilon} \EE_f \c{1-\phi} \leq Ae^{-Kn\varepsilon^2}.
	\end{equation}
	As a result, the testing condition limits the possible choice of metrics $\rho$ with respect to which posterior contraction rates \ref{def:posterior_contraction_rate} can be proven with the testing approach. In the context of nonparametric regression and for a Gaussian likelihood $L\p{y|f(x)} \propto \exp \p{-\p{y - f(x)}^2/2\sigma^2}, \quad \sigma > 0$, possible choices of semi-metrics satisfying the testing condition include the empirical $L^2$ norm
	\[
	\rho(f,f_0) = \norm{f-f_0}_n := \p{\frac{1}{n} \sum_{i=1}^n \p{f(X_i) - f_0(X_i)}^2}^{1/2},
	\]
	and, assuming an \iidt sampled design $X_i \iid \mu_0$, the Hellinger distance
	\[
	\rho(f,f_0) = H\p{f,f_0} = \int \p{1-\exp\p{-\frac{\p{f(x) - f_0(x)}^2}{8\sigma^2}}} \mu_0(dx),
	\]
	see \cite{ghosalFundamentalsNonparametricBayesian2017} Lemma 8.27 and Appendix D for the construction of appropriate tests (see also Section $8.5.2$ in \cite{ghosalFundamentalsNonparametricBayesian2017} for possible extensions to non Gaussian likelihoods). In the \iidt sampled design $X_i \iid \mu_0$, another natural choice for $\rho$ is the integrated version of $\norm{\cdot}_n$, namely the $L^2\p{\mu_0}$ norm
	\[
	\rho(f,f_0) = \norm{f-f_0}_{L^2\p{\mu_0}} := \p{\int \p{f(x) - f_0(x)}^2 \mu_0(dx)}^{1/2},
	\]
	for which the testing condition is not easily seen to hold (unless for instance one with priors supported on sets of uniformly bounded regression functions, see \cite{huangConvergenceRatesPosterior2004}). Obviously, if for some fixed $M>0$ we have $\norm{f_0}_\infty$ and $\Pi \c{\norm{f}_\infty \leq M | \Xbb^n} = 1$ for $\Pro_0-$amost every $\Xbb^n$, then $\Pi \c{\norm{f-f_0}_{L^2\p{\mu_0}} \leq C H\p{f,f_0} | \Xbb^n} = 1$ (where $C = \sup_{\abs{y} \leq 2M} \abs{y}/\sqrt{1-e^{-y^2/8\sigma^2}}$), and therefore a Hellinger posterior contraction rate is also an $L^2\p{\mu_0}$ one up to a constant multiple. The condition $\Pi \c{\norm{f}_\infty \leq M | \Xbb^n} = 1$ is certainly satisfied if $\Pi \c{\norm{f}_\infty \leq M} = 1$, and as a consequence the problem of optimal $L^2\p{\mu_0}$ contraction rates does not arise for priors supported on sets of uniformly bounded regression functions ; importantly this is not the case for Gaussian processes or random series with unbounded coefficients.
	
	In this spirit, if a known a priori upper bound on $\norm{f_0}_\infty$  is available, \cite{huangConvergenceRatesPosterior2004} shows that suitable priors (that are in particular supported on a set of uniformly bounded regression functions) do lead to optimal $L^2\p{\mu_0}$ contraction rates (although the proof is quite different than the simple argument presented above), and in an analogous way \cite{yangBayesianManifoldRegression2016,tangAdaptiveBayesianRegression2025} show that correctly clipped posterior distributions also possess similar optimal contraction properties. Beyond boundedness, most approaches require a minimal smoothness on $f_0$ in order to guarantee an optimal $L^2\p{\mu_0}$ posterior contraction rates. More precisely, Bernstein's concentration inequality has been used in \cite{vaartInformationRatesNonparametric2011,rosaPosteriorContractionRates2023,rosaNonparametricRegressionRandom2024a} in order relate the $\norm{\cdot}_n$ and the $\norm{\cdot}_{L^2\p{\mu_0}}$ and get optimal $L^2\p{\mu_0}$ contraction rates, while however requiring $s > d/2$ in order for the proof to go through, where here $s$ is the regularity of $f_0$ and $d$ the dimension of the input space $\XX$. Optimal $L^\infty\p{\mu_0}$ contraction rates (and therefore optimal $L^2\p{\mu_0}$ rates up to logarithmic factors) have also been derived in \cite{yooSupremumNormPosterior2016} for Gaussian B-splines priors by using the explicit representation of the posterior distribution (a consequence of conjugacy between Gaussian prior and likelihood), but still requires $s > d/2$. In \cite{xieRatesContractionRespect2019}, optimal $L^2\p{\mu_0}$ contraction rates for random series and Gaussian splines priors have been obtained by showing that the testing condition does hold for the $L^2\p{\mu_0}$ metric under specific assumptions on the prior support, which unfortunately rule out the $s \leq d/2$ case. Finally, it should be noted that optimal supremum norm contraction rates have been obtained in \cite{castilloBayesianSupremumNorm2014a} for (potentially non conjugate) product priors in the Gaussian white noise model for arbitrary positive regularities, but the results are not applicable to the nonparametric regression model \ref{model:nonparametric_regression}. Finally the problem of supremum norm contraction rates in the nonparametric regression model \ref{model:nonparametric_regression} has been investigated in \cite{yangFrequentistCoverageSupnorm2017}, where explicit rates are derived for (appropriately rescaled) Gaussian process priors with kernels having polynomially decaying Mercer eigenvalues, and thus the reproducing kernel Hilbert spaces (RKHSs) can be thought of as of Sobolev type. The smoothness conditions on $f_0$ that are used there are however not easy to connect with more classical function space norms. Indeed, even if by interpolation the $\alpha-$Sobolev norm as defined in Equation $(16)$ of the corresponding paper could potentially be seen to be equivalent to classical $\alpha-$Sobolev norms in concrete setting, an application of Theorem $3.1$ of the same paper yields the optimal $L^2\p{\mu_0}$ contraction rates for $s-$Sobolev functions ($n^{-\frac{s}{2s+d}}$ up to logarithmic factors) only if one assumes $s+d/2-$Sobolev smoothness of $f_0$. The problem does not appear however when using the results of the same paper regarding H\"older smooth functions, but the definition of the H\"older norm given in Equation $(17)$ is not classical, and does not appear to behave well under interpolation as in the Sobolev case. Furthermore, the conditions of applicability of Theorem $3.1$ in \cite{yangFrequentistCoverageSupnorm2017} necessitate in particular uniform boundedness of the Mercer eigenfunctions of the kernel operator (in the sense that $\sup_{j \geq 1} \norm{e_j}_\infty < +\infty$ where $\p{e_j}_{j \geq 1}$ are the Mercer eigenfunctions), and as we will see in Section \ref{sec:gaussian_processes}, this situation is not generic. The proof strategy used in \cite{yangFrequentistCoverageSupnorm2017} is however simple but powerful, and we will use it again in Section \ref{sec:gaussian_processes} : using conjugacy of the model we can explicitly write the posterior Gaussian process as another Gaussian process centered around a kernel ridge regression estimator with updated variance, and then control each term in the bias variance decomposition separately.\\
	
	In this note, we show how to obtain optimal $L^2\p{\mu_0}$ posterior contraction rates for two classes of priors : Gaussian processes with Sobolev type RKHS and random series. For Gaussian processes we follow the proof strategy of \cite{yangFrequentistCoverageSupnorm2017} (explicit representation for the posterior by conjugacy and connection with kernel ridge regression) and the results on kernel ridge regression estimators obtained in \cite{fischerSobolevNormLearning2020a}. The proof crucially uses an application of the matrix Bernstein's concentration inequality to the empirical inner product matrix of sufficiently many Mercer eigenfunctions, and doing so requires new (to the best of the author's knowledge) bounds on the Mercer eigenfunctions of the corresponding kernel operators which can be of independent interest. For random series priors $\Pi$ of the form $f \sim \sum_{j \geq 1} c_j e_j, \quad c \sim \tilde \Pi$, we can use the same matrix concentration inequality to directly relate the $\norm{f-f_0}_n$ and $\norm{f-f_0}_{L^2\p{\mu_0}}$ by explicit factors (even under non Gaussian priors) under a mild requirement on the chosen basis functions $\p{e_j}_{j \geq 1}$. In order to do so we require in particular for the posterior distribution to satisfy $\Pi \c{\norm{\sum_{j > J_n} c_j e_j}_\infty > M \varepsilon_n| \Xbb^n} = o_{\Pro_0}(1)$ where $\varepsilon_n$ is the desired contraction rate, for some $J_n \asymp \frac{n}{\ln n}$ (see Section \ref{section:random_series_priors} for precise statements). As we will see, although not universal these assumptions are not hard to meet in practice.\\
	
	The rest of the paper is organised as follows : we start by investigating the case of Gaussian process priors in Section \ref{sec:gaussian_processes} by using the functional analytical framework from \cite{fischerSobolevNormLearning2020a}. After stating a generic abstract theorem in Section \ref{subsection:general_gps} we apply it to the concrete case of Gaussian processes with RKHS (norm equivalent to) Sobolev spaces on domains and on compact Dirichlet spaces in Section \ref{subsection:examples_GPs}. Then we focus on random series priors in Section \ref{section:random_series_priors} where similarly after stating a generic theorem in Section \ref{subsec:general_results_random_series} we apply it to different cases of interest (sieve priors in Section \ref{subsection:sieve_priors} and hierarchical Gaussian processes on compact Dirichlet spaces in Section \ref{subsection:hierarchical_GPs_dirichlet_spaces}). In Section \ref{subsection:extension_rggs} we also adapt the results of Section \ref{subsection:sieve_priors} to the case of graph based semi supervised learning, where the basis functions depend on the covariates. Finally, Section \ref{section:discussion} ends with a discussion.\\
	
	\textbf{Notations :} $\RR$ will denote the set of real numbers, $\Nbb$ the set of natural number $\b{0,1,\ldots}$ and $\Nbb^* := \Nbb \backslash \b{0}$. For two Banach spaces $\p{E,\norm{\cdot}_E}$ and $\p{F,\norm{\cdot}_F}$ we will write $E \cong F$ if they are norm equivalent, i.e. if there exists a linear isomorphism $\varphi : E \to F$ and two constants $c_1,c_2>0$ such that for all $x \in E$ we have $c_1 \norm{\varphi(x)}_F \leq \norm{x}_E \leq c_2 \norm{\varphi(x)}_F$. For $p \in [1,\infty) \cup \b{\infty}$ and $\theta \in \p{0,1}$, we denote by $\p{E,F}_{\theta,p}$ the real interpolation space between $E$ and $F$, see e.g. \cite{lunardiInterpolationTheory2018}. If $E \subset F$ are such that the canonical injection $\iota : E \to F$ is continuous, we write $E \hookrightarrow F$ and $\norm{E \hookrightarrow F}$ for the operator norm of $\iota$, i.e. $\norm{E \hookrightarrow F} = \sup \b{\norm{x}_F : \norm{x}_E \leq 1}$. For a measured space $\p{\XX,\mu}$ and $p \geq 1$, the space $L^p\p{\mu}$ is then defined as the Banach space of equivalence classes of functions $f$ satisfying $\int \abs{f}^p d\mu  +\infty$ under the equivalence relation of $\mu-$almost everywhere equality. For any $d \geq 1$ and vector $v \in \RR^d$, we will write $\norm{v}_{\RR^d}$ for its Euclidean norm i.e. $\norm{v}_{\RR^d}^2 = \sum_{l=1}^d v_l^2$. For $n,p \geq 1$ we define $\RR^{n \times p}$ the space of real valued matrices of size $n \times p$, and for any $A \in \RR^{n \times p}$ we denote by $\norm{A}$ its operator norm, i.e. $\norm{A} = \sup \b{\norm{Ax}_{\RR^n} : \norm{x}_{\RR^p} \leq 1}$. For $A \in \RR^{n \times p}$ we define $\lambda_{\min}\p{A}$ (respectively $\lambda_{\max}\p{A}$) as the minimal (respectively maximal) eigenvalue of $A$, if it exists. In particular if $A$ is symmetric positive semidefinite we have $\lambda_{\max}\p{A} = \norm{A}$. For a linear operator $A : E \to F$ between two vector spaces we will use the notation $\rk \p{A}$ for its rank, i.e. $\rk \p{A} = \dim A(E)$. For a function $k : \RR^d \to \RR$ we will write $\hat k$ for its Fourier tranform, if it exists. If $X$ is a real valued random variable, we define $\norm{X}_{\psi_1} = \sup \b{\frac{1}{p}\EE \c{\abs{X}^p}^{1/p} : p \in \mathbb N^*}$ its $\psi_1-$norm, see e.g. \cite{vershyninHighDimensionalProbabilityIntroduction2018}. If $\p{X,Y}$ is a random vector with $Y$ being real valued, for any $x$ we define $\norm{Y | X = x}_{\psi_1}$ as $\norm{Y | X = x}_{\psi_1} = \sup \b{\frac{1}{p} \EE \c{\abs{Y}^p | X = x} : p \in \Nbb^*}$. For two probability measure $P,Q$ on a single measurable space $\XX$, we define $KL\p{P,Q} = +\infty$ if $P$ is anot absolutely continuous with respect to $Q$ and $KL\p{P,Q} = \int dP \ln \frac{dP}{dQ}$ otherwise. Similarly if $P$ is absolutely continuous with respect to $Q$ and $V\p{P,Q} = \int dP \ln^2 \frac{dP}{dQ} < +\infty$ we define $V_{2,0}\p{P,Q} = \int \p{\ln \frac{dP}{dQ} - KL\p{P,Q}}^2 dP$. Finally if $\p{\XX,\mu}$ is a measured space and $f$ a measurable function over $\XX$, we define $\essinf_\mu f$ as  the essential infimum of $f$ over $\XX$, i.e. $\essinf_\mu f = \sup \b{C \in \RR : C \leq f~\mu-~\text{a.e.}}$, and similarly $\esssup_\mu f = \inf \b{C \in \RR : C \geq f~\mu-~\text{a.e.}}$ for the essential supremum. Finally, if $f$ and $g$ are two real valued functions (including sequences), we write $f \asymp g$ (respectively $f = O(g), f \lesssim g,f \gtrsim g$) if there exists $c_1,c_2 > 0$ such that $c_1 \abs{f} \leq \abs{g} \leq c_2 \abs{f}$ (respectively $f \leq c_1 g,~f \geq c_1 g$), and $f = o(g)$ if $f/g \to 0$, all in the relevant asymptotic.
	
	\section{Gaussian processes}\label{sec:gaussian_processes}
	
	In this section we present results on $L^2\p{\mu_0}$ posterior contraction for conjugate Gaussian process regression. After introducing the functional analytical framework, we state a general theorem (Theorem \ref{thm:general_thm_GPs}) that we then apply to two different cases of interest (Corollaries \ref{corollary:matern_domains} \& \ref{corollary:matern_compact_dirichlet_spaces}).
	
	\subsection{General theorem}\label{subsection:general_gps}
	
	We adopt the framework of \cite{fischerSobolevNormLearning2020a}. We fix a separable RKHS $\Hbb$ on $\XX$ with a bounded and measurable kernel $k$ and we consider the canonical injection $I_{\mu_0} : \Hbb \to L^2\p{\nu}$ as well the integral operator
	
	\[
	S_{\mu_0} := I_{\mu_0}^* : \p{\begin{array}{ccc}
			L^2\p{\mu_0} & \to & \Hbb \\
			f & \mapsto & \int k(\cdot,y)f(y)\mu_0(dy)
	\end{array}}.
	\]
	
	We also define $T_{\mu_0} := I_{\mu_0} S_{\mu_0} : L^2\p{\mu_0} \to L^2\p{\mu_0}$ and $C_{\mu_0} := S_{\mu_0} I_{\mu_0} : \Hbb \to \Hbb$. Following \cite{fischerSobolevNormLearning2020a} and the references therein, the injection $I_{\mu_0}$ is Hilbert-Schmidt and thus compact, and so are the compositions $T_{\mu_0} = I_{\mu_0} S_{\mu_0}$ and $C_{\mu_0} = S_{\mu_0} I_{\mu_0}$. The spectral theorem for compact self-adjoint operator then implies
	\[
	T_{\mu_0} = \sum_{j \geq 1} s_j \inner{\cdot|e_j}_{L^2\p{\mu_0}} e_j, \quad \text{and} \quad C_{\mu_0} = \sum_{j \geq 1} s_j^2 \inner{\cdot|e_j}_{\Hbb} e_j,
	\]
	for some $s_j \geq 0$ and an orthonormal basis $\p{e_j}_{j \geq 1}$ of $\overline{I_{\mu_0}\p{\Hbb}}$. Here $\inner{\cdot|\cdot}_{L^2\p{\mu_0}}$ and $\inner{\cdot|\cdot}_\Hbb$ denote the inner products in the Hilbert spaces $L^2\p{\mu_0}$ and $\Hbb$, respectively. We then define the power spaces $\Hbb_{\mu_0}^\tau$ as
	\[
	\Hbb_{\mu_0}^\tau := \b{\sum_{j \geq 1} a_j s_j^{\tau/2} e_j : a \in \ell^2\p{\mathbb{N}^*}} \subset L^2\p{\nu},
	\]
	equipped with the norm
	\[
	\norm{\sum_{j \geq 1} a_j s_j^{\tau/2} e_j}_{\Hbb_{\mu_0}^\tau} := \norm{a}_{\ell^2\p{\mathbb{N}^*}}.
	\]
	In the case $\tau = 1$ we have $\Hbb_{\mu_0}^1 = \Hbb$ and
	\[
	\norm{\sum_{j \geq 1} a_j s_j^{1/2} e_j}_\Hbb = \norm{a}_{\ell^2\p{\mathbb{N}^*}}.
	\]
	Moreover when $0 < \tau < 1$ the space $\Hbb_{\mu_0}^\tau$ can be described by real interpolation
	\begin{equation}\label{eqn:real_interpolation}
		\Hbb_\nu^\tau \cong \p{L^2\p{\mu_0},\Hbb}_{\tau,2},
	\end{equation}
	see e.g. Theorem 4.6 in \cite{steinwartMercersTheoremGeneral2012}. As in \cite{fischerSobolevNormLearning2020a} we make the following assumptions :
	
	\begin{assumption}\label{assumption:EVD}
		There exists $0 < p < 1$ and constants $c_1,c_2$ such that
		\[
		c_1 j^{-1/p} \leq s_j \leq c_2 j^{-1/p}, \quad j \geq 1.
		\]
	\end{assumption}
	
	\begin{assumption}\label{assumption:EMB}
		For some $0 < \tau \leq 1$ we have
		\[
		\norm{\Hbb_{\mu_0}^\tau \hookrightarrow L^\infty\p{\mu_0}} < +\infty.
		\]
	\end{assumption}
	
	\begin{assumption}\label{assumption:SRC}
		$f_0 \in L^\infty\p{\mu_0}$ and for some $0 < \beta \leq 2$ we have
		\[
		\norm{f_0}_{\Hbb_{\mu_0}^\beta} < +\infty.
		\]
	\end{assumption}
	
	\begin{assumption}\label{assumption:MOM}
		For some $L,\sigma > 0$ we have
		\[
		\int \abs{y - f_0(x)}^m P_0(dxdy) \leq \frac{\sigma^2}{2} m! L^{m-2}
		\]
		for all $m \geq 2$ and $\mu_0-$almost every $x \in \XX$. We also assume that $\abs{P_0}_2^2 := \int y^2 P_0\p{dxdy} < +\infty$.
	\end{assumption}
	
	Let us comment on these assumptions. As we will see in concrete settings \ref{corollary:matern_domains} \& \ref{corollary:matern_compact_dirichlet_spaces}, by real interpolation \ref{eqn:real_interpolation} the space $\Hbb_\nu^\tau$ for $\tau \in \p{0,1}$ will be seen to be norm equivalent to the Sobolev space $H^{\tau\p{\alpha + d/2}}\p{\XX}$ for some $\alpha > 0$ such that $\Hbb \cong H^{\alpha + d/2}\p{\XX}$, where $H^s\p{\XX}$ denotes the $s-$order Sobolev space over $\XX$ for all $s > 0$. The proof of Corollaries \ref{corollary:matern_domains} \& \ref{corollary:matern_compact_dirichlet_spaces} will allow us to take $p = \frac{d}{2\alpha + d}$ where $d$ is the dimension of $\XX$ and $\tau > p$ as close to $p$ as desired. As a consequence, Assumption \ref{assumption:EMB} will be a consequence of the Sobolev embedding theorem (which is classical in the setting of Corollary \ref{corollary:matern_domains}, but for which we provide a proof for compact Dirichlet spaces, see Proposition \ref{prop:embeddings}). The proof of Corollaries \ref{corollary:matern_domains} \& \ref{corollary:matern_compact_dirichlet_spaces} will also allow us to take $\beta = \frac{2s}{2\alpha + d}$ for some $s>0$, so that Assumption \ref{assumption:SRC} is actually simply the requirement $f_0 \in L^\infty \p{\nu} \cap H^s\p{\XX}$, i.e. a boundedness and $s-$Sobolev smoothness assumption on $f_0$. Importantly we do not require $s > d/2$. Assumption \ref{assumption:MOM} allows for heteroscedasticity of the errors $y - f_0(x)$ ; a reformulation is
	\[
	\norm{\frac{y-f_0(x)}{L}}_{L^m\p{P_0}} \leq \p{\frac{\sigma^2}{2L^2} m!}^{1/m} \asymp m
	\]
	for any $m \geq 2$, where we have used Stirling's approximation. Thus $P_0$ satisfies the first bound in Assumption \ref{assumption:MOM} for some $L,\sigma > 0$ if and only if the errors $y-f_0(x)$ are subexponential, see e.g. \cite{vershyninHighDimensionalProbabilityIntroduction2018}. If
	\begin{equation}\label{eqn:condition_subexp}
	\int \norm{y-f_0(x) | X = x}_{\psi_1}^m \mu_0(dx) \leq c^m
	\end{equation}
	for some $c>0$, then
	\[
	\int \p{y-f_0(x)}^m P_0(dxdy) =  \int \int \p{y-f_0(x)}^m P_0(dy|x) \mu_0(dx) \leq \int \p{\norm{y-f_0(x) | x}_{\psi_1} m}^m \mu_0(dx) \leq \p{cm}^m,
	\]
	so that the first bound of Assumption \ref{assumption:MOM} does hold in this case. The condition \ref{eqn:condition_subexp} is in particular guaranteed whenever $x \mapsto \norm{y-f_0(x)}_{\psi_1} \in L^\infty\p{\mu_0}$, for instance Gaussian or Laplace errors with essentially bounded standard deviations. Moreover using $y^2 = \p{y-f_0(x) + f_0(x)}^2 \leq 2\p{y-f_0(x)}^2 + 2f_0(x^2) \leq 2\p{y-f_0(x)}^2 + 2\norm{f_0}_\infty^2$ we get
	\[
	\abs{P_0}^2 \leq 2\norm{f_0}_\infty^2 + 2 \norm{y-f_0(x)}_{L^2\p{P_0}}^2 \lesssim \norm{f_0}_\infty^2 + \norm{y-f_0(x)}_{\psi_1}^2,
	\]
	and therefore Assumption \ref{assumption:SRC} and subexponentiality of $y-f_0(x)$ directly implies $\abs{P_0}_2^2 <+\infty$. Finally, Assumption \ref{assumption:EVD} is a structural property that must be satisfied by the kernel $k$ and rules out exponentially decaying eigenvalues (see e.g. heat kernels on compact manifolds or Dirichlet spaces \cite{devitoReproducingKernelHilbert2021a,castilloThomasBayesWalk2014}), but as we will see typically holds if the RKHS $\Hbb$ is of Sobolev type. More precisely, in our examples \ref{corollary:matern_domains} \& \ref{corollary:matern_domains} we will (following \cite{fischerSobolevNormLearning2020a}) verify Assumption \ref{assumption:EVD} by only using that the RKHS $\Hbb$ is norm equivalent to a classical Sobolev space and leveraging the connection with the approximation numbers of the embedding $\Hbb \hookrightarrow L^2\p{\mu_0}$ (see e.g. Chapter $3$ in \cite{edmundsFunctionSpacesEntropy1996} and Chapter $4$ in \cite{carlEntropyCompactnessApproximation1990}).\\
	
	Under Assumptions \ref{assumption:EVD} \ref{assumption:EMB} \ref{assumption:SRC} \& \ref{assumption:MOM}, \cite{fischerSobolevNormLearning2020a} proved sharp convergence rates for the kernel regression estimator associated with $\Hbb$ and $k$ under several Sobolev-type metrics. We generalise here the result to Gaussian random elements with RKHS $\Hbb$ and kernel $k$ by a careful analysis of the posterior variance, which is explicit thanks to conjugacy.
	
	\begin{theorem}\label{thm:general_thm_GPs}
		Let $f \sim \Pi$ be a mean zero Gaussian random element in $L^2\p{\nu}$ with RKHS $\Hbb$. Consider the posterior distribution
		\[
		\Pi \c{df | X,Y} \propto \prod_{i=1}^n \p{2\pi \sigma^2}^{-1/2} \exp \p{- \frac{\p{y_i - f(x_i)}^2}{2\sigma^2}} \Pi \c{df},
		\]
		corresponding to the observation model
		\[
		y_i = f(x_i) + \varepsilon_i, \quad \varepsilon_i 
		\iid \NN\p{0,\sigma^2},
		\]
		for some $\sigma > 0$. Then if Assumptions \ref{assumption:EVD} \ref{assumption:EMB}  \ref{assumption:SRC} and \ref{assumption:MOM} hold with either $0 < p < \tau < \beta < \beta + p \leq 1$ or $0 < \beta \leq p < \tau < \beta + p \leq 1$, for any sequence $M_n \to \infty$ we have
		\[
		\EE_0 \c{\norm{f - f_0}_{L^2\p{\nu}} > M_n \varepsilon_n | \Xbb^n} \to 0, \quad \varepsilon_n = n^{-\frac{\beta}{2}}.
		\]
	\end{theorem}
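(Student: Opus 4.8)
My plan is to exploit conjugacy, which makes the posterior itself Gaussian, and then to split a posterior draw as $f = \bar f + (f-\bar f)$ with $\bar f$ the posterior mean, controlling the ``bias'' $\norm{\bar f - f_0}_{L^2(\mu_0)}$ and the ``fluctuation'' $\norm{f-\bar f}_{L^2(\mu_0)}$ separately, each by $O(\varepsilon_n)$ in $\Pro_0$-probability. First I would set up the conjugate picture. Writing $f = \sum_{j\geq1}\sqrt{s_j}\,\xi_j e_j$ with $\xi_j \iid \mathcal N(0,1)$ -- so that $\norm{f}_\Hbb^2 = \sum_j\xi_j^2$, since $\{\sqrt{s_j}e_j\}_j$ is an orthonormal basis of $\Hbb$ -- the observation model becomes the Bayesian linear model $Y = \Phi\xi + \varepsilon$, with $\Phi_{ij} = \sqrt{s_j}\,e_j(x_i)$ and $\varepsilon\sim\mathcal N(0,\sigma^2 I_n)$, so that $\xi\mid\Xbb^n \sim \mathcal N\p{\mu_n,\p{I+\sigma^{-2}\Phi^\top\Phi}^{-1}}$. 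Hence $f\mid\Xbb^n$ is Gaussian in $L^2(\mu_0)$ with mean $\bar f$ equal to the kernel ridge regression estimator at regularisation $\lambda_n = \sigma^2/n$, and with covariance operator $\bar C$ -- depending only on the design -- satisfying $\Tr\bar C = \Tr\p{D\p{I + n\sigma^{-2}D^{1/2}\widehat\Sigma_n D^{1/2}}^{-1}}$, where $D = \mathrm{diag}(s_j)$ and $\widehat\Sigma_n = \tfrac1n\sum_{i=1}^n\phi(x_i)\phi(x_i)^\top$, $\phi = (e_j)_j$, is the empirical inner-product matrix of the Mercer eigenfunctions, with $\EE_0\widehat\Sigma_n = I$. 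Up to the weights $D$, $\widehat\Sigma_n$ is the empirical covariance operator $\widehat C_n$ of \cite{fischerSobolevNormLearning2020a}.

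The key lemma I would establish is the operator comparison
\[
c\p{C_{\mu_0}+\lambda_n} \preceq \widehat C_n + \lambda_n \preceq C\p{C_{\mu_0}+\lambda_n}
\]
on a $\Pro_0$-event of probability tending to one, for fixed constants $0<c\le C$. I would obtain this from a matrix/operator Bernstein inequality applied to $\widehat\Sigma_n$ after regularising by $\lambda_n$; it succeeds as soon as the regularised statistical dimensions $\mathcal N(\lambda_n) := \Tr\p{T_{\mu_0}\p{T_{\mu_0}+\lambda_n}^{-1}} \asymp \lambda_n^{-p}$ and $\mathcal N_\infty(\lambda_n) := \sup_x\norm{\p{C_{\mu_0}+\lambda_n}^{-1/2}k(x,\cdot)}_\Hbb^2 \lesssim \lambda_n^{-\tau}$ -- this last bound being exactly Assumption \ref{assumption:EMB}, in the form $\sup_x\sum_j s_j^\tau e_j(x)^2 < \infty$ -- are both $o\p{n/\ln n}$. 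Since $\lambda_n = \sigma^2/n$ and $p,\tau<1$, this is the case. This is the step where quantitative control of the eigenfunctions is essential, and where a naive argument breaks down at the small scale $\lambda_n\asymp1/n$ imposed by the likelihood; it is also, in the concrete corollaries, where explicit $\norm{e_j}_\infty$ bounds replace the abstract Assumption \ref{assumption:EMB}.

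Granting the comparison, I would handle the fluctuation term as follows: given $\Xbb^n$, $f-\bar f$ is a centred Gaussian element of $L^2(\mu_0)$ with $\EE\c{\norm{f-\bar f}_{L^2(\mu_0)}^2\mid\Xbb^n} = \Tr\bar C \lesssim \sigma^2\sum_{j\geq1}\frac{s_j}{ns_j+\sigma^2} \asymp n^{-(1-p)} \leq \varepsilon_n^2$, the last inequality by $\beta+p\leq1$. Thus $\EE_0\Tr\bar C = O(\varepsilon_n^2)$, so $\Pro_0(\Tr\bar C > M_n\varepsilon_n^2) = O(1/M_n)\to0$ by Markov, and on the complementary event the Borell--TIS inequality applied to the conditional law gives $\Pi\c{\,\norm{f-\bar f}_{L^2(\mu_0)} > M_n\varepsilon_n/2 \mid \Xbb^n} \to 0$ for any $M_n\to\infty$. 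For the bias term, $\lambda_n = \sigma^2/n$ sits (asymptotically) above the bias--variance-optimal level $n^{-1/(\beta+p)}$ because $\beta+p\leq1$, so we are in the over-regularised, bias-dominated regime: running the \cite{fischerSobolevNormLearning2020a} analysis with the comparison above and Assumptions \ref{assumption:EVD}, \ref{assumption:SRC}, \ref{assumption:MOM} yields the approximation error $\norm{\p{T_{\mu_0}+\lambda_n}^{-1}T_{\mu_0}f_0 - f_0}_{L^2(\mu_0)} \lesssim \lambda_n^{\beta/2} \asymp n^{-\beta/2}$ (using $\beta\leq2$ and Assumption \ref{assumption:SRC}) and the stochastic error $\lesssim \sqrt{\sigma^2\mathcal N(\lambda_n)/n} \asymp n^{-(1-p)/2} \leq n^{-\beta/2}$, whence $\norm{\bar f - f_0}_{L^2(\mu_0)} = O(\varepsilon_n)$ with $\Pro_0$-probability tending to one. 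Finally, from $\Pi\c{\,\norm{f-f_0}_{L^2(\mu_0)} > M_n\varepsilon_n \mid \Xbb^n} \leq \Pi\c{\,\norm{f-\bar f}_{L^2(\mu_0)} > M_n\varepsilon_n/2 \mid \Xbb^n} + \ind{\norm{\bar f-f_0}_{L^2(\mu_0)} > M_n\varepsilon_n/2}$, taking $\EE_0$ and using bounded convergence over the design completes the argument.

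The main obstacle is precisely the operator comparison at the likelihood-imposed scale $\lambda_n = \sigma^2/n$: the usual regularised-least-squares concentration is only run at larger scales, and pushing it down to $\lambda_n\asymp1/n$ forces one to exploit $\tau<1$ (so that $\mathcal N_\infty(\lambda_n)\lesssim\lambda_n^{-\tau} = o(n/\ln n)$) together with the matrix Bernstein control of the empirical eigenfunction inner-product matrices. The two parameter regimes $0<p<\tau<\beta$ and $0<\beta\leq p<\tau$ enter only through which of the competing terms in the fluctuation and bias bounds dominates and need minor separate bookkeeping; in both of them the bias $\lambda_n^{\beta/2}\asymp n^{-\beta/2}$ is the bottleneck, which is why the rate is $\varepsilon_n = n^{-\beta/2}$.
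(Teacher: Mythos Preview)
Your strategy is correct and matches the paper's: exploit conjugacy, split into the posterior mean (which is the KRR estimator at $\lambda_n=\sigma^2/n$) and the centred fluctuation, control the former via the \cite{fischerSobolevNormLearning2020a} analysis and the latter via a matrix Bernstein input on the empirical eigenfunction Gram matrix, with Assumption~\ref{assumption:EMB} (equivalently $\mathcal N_\infty(\lambda)\lesssim\lambda^{-\tau}$) as the crucial quantitative input that makes the concentration work at the small scale $\lambda_n\asymp 1/n$.

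The one place where your route differs from the paper is the variance bound. You argue through the full operator comparison $c(C_{\mu_0}+\lambda_n)\preceq \widehat C_n+\lambda_n$ on a high-probability event and then read off $\Tr\bar C\lesssim \lambda_n\mathcal N(\lambda_n)\asymp n^{p-1}$. The paper instead truncates at level $L\asymp n^p$, works with the finite $L\times L$ empirical Gram matrix $\tfrac1n U_L^\top U_L$ of the first $L$ Mercer eigenfunctions, and proves by explicit linear algebra (splitting $\sigma_n^2$ into a ``low-frequency'' piece $(I)$ controlled by $\lambda_{\min}(\tfrac1n U_L^\top U_L)$ and a ``cross'' piece $(II)$ shown to be nonpositive) the deterministic inequality $\ind{E}\sigma_n^2\le \frac{\sigma^2 L}{n\kappa}+\sum_{l>L}s_l$, where $E=\{\tfrac1n U_L^\top U_L\succeq\kappa I_L\}$. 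The two routes feed on the same ingredient---Lemma~\ref{lemma:generic_matern_bound_basis} / the bound $\sup_x\sum_{l\le L}e_l(x)^2\lesssim L^{\tau/p}$---but yours is more abstract while the paper's is more hands-on; the truncation argument has the incidental benefit of producing Proposition~\ref{proposition:bound_mercer_eigenfunctions_domains_dirichlet_spaces} as a byproduct. Two minor points: Borell--TIS is unnecessary for the fluctuation (Markov on $\Tr\bar C$ suffices, as in the paper), and your ``minor separate bookkeeping'' for the two parameter regimes is real but small---the paper packages it as Theorem~\ref{theorem:cv_krr}, where an extra term of order $\tfrac1n\lambda_n^{-(\tau-\beta)_+}$ beyond your approximation and stochastic errors must be checked to be $O(n^{-\beta})$, which is precisely where the constraint $\tau<\beta+p$ is used in the $\beta\le p$ case.
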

	
	As we will se in the proof of the examples \ref{corollary:matern_domains} \& \ref{corollary:matern_compact_dirichlet_spaces}, the regime $0 < p < \tau < \beta < \beta + p \leq 1$ corresponds to the case $s > d/2$ while $0 < \beta \leq p < \tau < \beta + p \leq 1$ corresponds to $s \leq d/2$. The proof of Theorem \ref{thm:general_thm_GPs} (which can be found in details in Section \ref{appendix:proof_GPs}) starts from the bias variance decomposition, where the bias is identified with the $L^2\p{\mu_0}$ risk of the kernel ridge regression estimator $\hat f$ associated with $k$, which is bounded using the results in \cite{fischerSobolevNormLearning2020a} (although a small refinement of the proof is required in order to handle the case $s \leq d/2$). For the posterior variance, using technical but straightforward linear algebra we are able to show that, for any $L \geq 1$,
	\[
	\ind{E_L} \int \norm{\hat f - f}_{L^2\p{\mu_0}}^2 \Pi \c{df | \Xbb^n} \leq \frac{\sigma^2 L}{n \kappa} + \sum_{l > L} \mu_l,
	\]
	where $E_L$ is the event $\b{\frac{1}{n} U_L^T U_L \geq \kappa I_L}$, for $U_L$ the $n \times L$ matrix with entries $\p{U_L}_{il} = e_l(x_i)$. The last term in the right hand side of the last display is controlled using Assumption \ref{assumption:EVD}, and in order to show that $\Pro_0\p{E_L} \to 1$ we apply the matrix Bernstein concentration inequality, see Theorem \ref{theorem:1} in Section \ref{subsec:general_results_random_series} (with $L \asymp n^p$). In order to do so we crucially have to find an almost sure upper bound on the quantity $\sup_{x \in \XX} \sum_{l = 1}^L e_l^2(x)$, which we find by exploiting Assumptions \ref{assumption:EVD} \& \ref{assumption:EMB}, see Lemma \ref{lemma:generic_matern_bound_basis}.\\
	
	Before applying Theorem \ref{thm:general_thm_GPs} to two concrete examples, let us mention a byproduct of our analysis which is of independent interest. As detailed above, the proof of Theorem \ref{thm:general_thm_GPs} required to find an almost sure bound on $\sup_{x \in \XX} \sum_{l = 1}^L e_l^2(x)$ for some $L \asymp n^p$. Lemma \ref{lemma:generic_matern_bound_basis} provides such a bound : for any $\tau \in \p{p,\beta}$ (if $\beta > p$) or $\tau \in \p{p,\beta+p}$ (if $\beta \leq p$),
	\[
	\sup_{x \in \XX} \sum_{l = 1}^L e_l^2(x) \lesssim L^{\tau/p}, \quad L \geq 1,
	\]
	up to a comparison constant depending only on $\tau$ and $\mu_0$. In particular we obtain :
	
	\begin{proposition}\label{proposition:bound_mercer_eigenfunctions_domains_dirichlet_spaces}
		Under the conditions of Theorem \ref{thm:general_thm_GPs}, for any $\delta > 0$ there exists a constant $c>0$ that depends only on $\delta$ and $\mu_0$ such that
		\[
		\sup_{x \in \XX} \sum_{l = 1}^L e_l^2(x) \leq c^2 L^{1+\delta}, \quad L \geq 1.
		\]
		In particular for any $\delta > 0$ there exists $c>0$ such that $\norm{e_l}_\infty \leq c l^{1/2 + \delta}$ for all $l \geq 1$.
	\end{proposition}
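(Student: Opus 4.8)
The plan is to deduce the proposition directly from Lemma~\ref{lemma:generic_matern_bound_basis}. That lemma, valid under the hypotheses of Theorem~\ref{thm:general_thm_GPs}, states that for every admissible exponent $\tau$ --- that is, $\tau \in (p,\beta)$ when $\beta > p$, and $\tau \in (p,\beta+p)$ when $\beta \leq p$ --- one has
\[
\sup_{x \in \XX} \sum_{l=1}^L e_l^2(x) \leq c_\tau^2\, L^{\tau/p}, \qquad L \geq 1,
\]
with $c_\tau$ depending only on $\tau$ and $\mu_0$. As $\tau$ ranges over the admissible interval, the attainable exponents $\tau/p$ fill the open interval $(1,r)$, where $r := \beta/p$ if $\beta > p$ and $r := (\beta+p)/p = \beta/p + 1$ if $\beta \leq p$; in both cases $r > 1$. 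Thus the content of the proposition is just a reparametrisation of this bound, writing $\delta$ in place of $\tau$.

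First I would fix $\delta > 0$ and split into two cases. If $1 + \delta < r$, then $\tau := p(1+\delta)$ is admissible, and Lemma~\ref{lemma:generic_matern_bound_basis} gives $\sup_{x} \sum_{l=1}^L e_l^2(x) \leq c^2 L^{1+\delta}$ with $c := c_\tau$, which depends only on $\delta$ and $\mu_0$ since $p$ is fixed. If instead $1 + \delta \geq r$, I would apply the first case with $\delta$ replaced by any fixed $\delta_0 \in (0, r-1)$, say $\delta_0 = (r-1)/2$: since $L \geq 1$ we have $L^{1+\delta_0} \leq L^{1+\delta}$, hence $\sup_{x} \sum_{l=1}^L e_l^2(x) \leq c^2 L^{1+\delta}$ with $c = c(\delta_0,\mu_0)$. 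In either case the first displayed bound of the proposition holds for all $\delta > 0$.

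For the bound on $\norm{e_l}_\infty$, recall that each $e_l$ lies in $\Hbb^\tau_{\mu_0} \hookrightarrow L^\infty(\mu_0)$, so $\norm{e_l}_\infty$ is well defined. Since all summands are nonnegative and $l \geq 1$, the $L^\infty$-representatives satisfy $e_l^2 \leq \sum_{m=1}^l e_m^2$ $\mu_0$-a.e., so by the first part $\norm{e_l}_\infty^2 \leq c^2\, l^{1+\delta'}$ for any $\delta' > 0$, with $c = c(\delta',\mu_0)$. Hence $\norm{e_l}_\infty \leq c\, l^{1/2 + \delta'/2}$; taking $\delta' = 2\delta$ yields $\norm{e_l}_\infty \leq c\, l^{1/2+\delta}$ for all $l \geq 1$, as claimed.

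I do not expect a real obstacle here: all the substance --- converting Assumptions~\ref{assumption:EVD} and \ref{assumption:EMB} into the $L^{\tau/p}$ bound on the partial sums of squared eigenfunctions --- is already packaged in Lemma~\ref{lemma:generic_matern_bound_basis}. The only mildly delicate point is that Lemma~\ref{lemma:generic_matern_bound_basis} only furnishes exponents strictly below $r$, which may be smaller than a prescribed $1+\delta$; the monotonicity of $L \mapsto L^{1+\delta}$ on $[1,\infty)$ disposes of the remaining large-$\delta$ regime at no cost.
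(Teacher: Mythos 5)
Your proof is correct and follows essentially the same route the paper itself takes: the paper does not give a standalone proof of Proposition \ref{proposition:bound_mercer_eigenfunctions_domains_dirichlet_spaces} but derives it immediately from Lemma \ref{lemma:generic_matern_bound_basis} via the preceding discussion, which records exactly the bound $\sup_x \sum_{l \leq L} e_l^2(x) \lesssim L^{\tau/p}$ for $\tau$ in the admissible interval. Your case split (choose $\tau = p(1+\delta)$ when $1+\delta$ sits below $r = \beta/p$ or $(\beta+p)/p$, and otherwise fall back to a fixed small $\delta_0$ and use monotonicity of $L \mapsto L^{1+\delta}$ on $[1,\infty)$), followed by $e_l^2 \leq \sum_{m\leq l} e_m^2$ to get the individual sup-norm bound, is precisely the intended argument; the only point worth flagging is that applying Lemma \ref{lemma:generic_matern_bound_basis} with $\tau$ arbitrarily close to $p$ tacitly assumes Assumption \ref{assumption:EMB} holds throughout the admissible interval and not just at the single $\tau$ fixed in Theorem \ref{thm:general_thm_GPs} --- but the paper's own discussion makes the same tacit assumption (and it holds in both concrete corollaries via the Sobolev embedding), so your write-up is faithful to the paper.
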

	
	The bound $\norm{e_l}_\infty \leq c j^{1/2 + \delta}$ of Proposition \ref{proposition:bound_mercer_eigenfunctions_domains_dirichlet_spaces} is essentially sharp using only Assumptions \ref{assumption:EVD} \& \ref{assumption:EMB}. Indeed, on $\XX = \c{-1,1}^d$, $k$ the kernel
	\[
	k(x,y) := \sum_{l \geq 1} 2^{-2l(\alpha + d/2)}\sum_{k = 1}^{2^{ld}} \psi_{lk}(x)\psi_{lk}(y)
	\]
	for some $S-$regular, compactly and boundary corrected Daubechies wavelet basis on $\XX$ (see e.g. \cite{gineMathematicalFoundationsInfiniteDimensional2015} Chapter $4$), the associated kernel operator with respect to $\mu_0 = \mu$ the Lebesgue measure is diagonal in the basis $\p{\psi_{lk}}$ with eigenvalues $s_j = 2^{-2l(\alpha+d/2)}$. By relabeling $\p{l,k}$ into a new index $j \geq 1$ according to the lexicographic order we easily see that $s_j \asymp j^{-(1+2\alpha/d)}$. Indeed the eigenvalue associated with $\psi_{lk}$ is $2^{-2l(\alpha + d/2)}$, and therefore $j = 2^{ld}$ yields $2^{-2l(\alpha + d/2)} = j^{-(1+2\alpha/d)}$, while if $2^{ld} < j \leq 2^{(l+1)d}$ we obtain $2^{-(2\alpha + d)} 2^{-2l(\alpha + d/2)} \leq j^{-(1+2\alpha/d)} < 2^{-2l(\alpha + d/2)}$, i.e. Assumption \ref{assumption:EVD} holds with $p = \frac{d}{2\alpha + d}$. Moreover the norm of the Hilbert space $\Hbb_{\mu_0}^\tau$ is by definition given by
	\[
	\norm{f}_{\Hbb_{\mu_0}^\tau}^2 = \sum_{l \geq 1} 2^{2\tau l\p{\alpha + d/2}} \sum_{k = 1}^{2^{ld}} \inner{\psi_{lk} | f}_{L^2\p{\mu_0}}^2,
	\]
	which (since $\mu_0 = \mu$) is an equivalent norm on the Sobolev space $H^{\tau\p{\alpha + d/2}}\p{\XX}$, and we have a continuous embedding $\Hbb_{\mu_0}^\tau \hookrightarrow L^\infty\p{\mu_0}$ for $\tau \p{\alpha + d/2} > d/2 \iff \tau > p$, see \cite{gineMathematicalFoundationsInfiniteDimensional2015} Chapter $4$. But $\norm{\psi_{lk}}_\infty \asymp 2^{ld/2}$ which, according to the lexicographic relabeling, is of order $j^{1/2}$ (by the same argument as above that was used to show that $s_j \asymp j^{-(1+2\alpha/d)}$). Whether or not the conclusions of Proposition \ref{proposition:bound_mercer_eigenfunctions_domains_dirichlet_spaces} hold in general with $\delta = 0$ is however not clear. Obviously the conclusion $\norm{e_l}_\infty = O\p{l^{1/2 + \delta}}$ for all $\delta > 0$ can be suboptimal for specific basis, see the discussion in Remark \ref{remark:suboptimality_laplacian_eigenbasis}, but that appears to be irrelevant for proof of posterior contraction.
	
	\subsection{Examples}\label{subsection:examples_GPs}
	
	We start by applying Theorem \ref{thm:general_thm_GPs} to Gaussian processes on domains of $\RR^d$ with $\CC^\infty$ boundary (in the sense of \cite{triebelTheoryFunctionSpaces1983}, Section $3.2.1$).
	
	\begin{corollary}\label{corollary:matern_domains}
		In the setting of Section \ref{subsection:general_gps}, let $\XX$ be a non empty, open, connected and bounded domain of $\RR^d$ with $\CC^\infty-$boundary, $\mu_0$ have a density $p_0$ with respect to the Lebesgue measure $\mu$ on $\XX$ that satisfies $0 < \inf_\XX p_0 \leq \sup_\XX p_0 < +\infty$ and $\Hbb \subset L^2\p{\nu}$ be an RKHS with reproducing kernel $k$ that is equal as set and norm equivalent to the Sobolev space $H^{\alpha + d/2}\p{\XX}$ for some $\alpha > 0$, as defined in \cite{adamsSobolevSpaces2003}, Chapter $7$. Then if $f_0 \in L^\infty \p{\mu} \cap H^s\p{\XX}, s>0$, Assumption \ref{assumption:MOM} holds and $f$ is a mean zero Gaussian process on $\XX$ with covariance kernel $k$, with $\Pi \c{df | \Xbb^n}$ the posterior distribution
		\[
		\Pi \c{df | \Xbb^n} \propto \prod_{i=1}^n \p{2\pi \sigma^2}^{-1/2} \exp \p{- \frac{\p{y_i - f(x_i)}^2}{2\sigma^2}} \Pi \c{df},
		\]
		corresponding to the observation model
		\[
		y_i = f(x_i) + \varepsilon_i, \quad \varepsilon \iid \NN\p{0,\sigma^2}
		\]
		for some $\sigma > 0$, for any $M_n \to \infty$ we have
		\[
		\EE_0 \Pi \c{\norm{f-f_0}_{L^2\p{\nu}} > M_n \varepsilon_n | X,Y} \to 0, \quad \varepsilon_n = n^{-\frac{\alpha \wedge s}{2\alpha + d}}.
		\]
	\end{corollary}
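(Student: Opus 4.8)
The plan is to check that, under the hypotheses of the corollary, Assumptions \ref{assumption:EVD}--\ref{assumption:MOM} of Theorem \ref{thm:general_thm_GPs} hold with $p=\frac{d}{2\alpha+d}\in(0,1)$, with $\beta=\frac{2\p{\alpha\wedge s}}{2\alpha+d}=\min\!\p{1-p,\tfrac{2s}{2\alpha+d}}\in(0,1)$, and with a suitable $\tau\in(p,1)$, and then to read off the conclusion from Theorem \ref{thm:general_thm_GPs}. Throughout I would write $\nu=\mu_0$ and let $\mu$ be the Lebesgue measure on $\XX$; the two-sided bound $0<\inf_\XX p_0\le\sup_\XX p_0<+\infty$ makes the identity an isomorphism $L^2\p{\mu_0}\cong L^2\p{\mu}$ and gives $L^\infty\p{\mu_0}=L^\infty\p{\mu}$ with the same norm. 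The first step is Assumption \ref{assumption:EVD}: the singular numbers of $I_{\mu_0}:\Hbb\to L^2\p{\mu_0}$ are $\p{s_j^{1/2}}_{j\ge1}$, and writing $I_{\mu_0}$ as the composition $\Hbb \xrightarrow{\sim} H^{\alpha+d/2}\p{\XX} \hookrightarrow L^2\p{\XX} \xrightarrow{\sim} L^2\p{\mu_0}$ (the outer two maps being isomorphisms, by $\Hbb=H^{\alpha+d/2}\p{\XX}$ with equivalent norms and by $0<\inf_\XX p_0\le\sup_\XX p_0<+\infty$ respectively), the ideal property of approximation numbers shows that $s_j^{1/2}$ is comparable to the $j$-th approximation number of $H^{\alpha+d/2}\p{\XX}\hookrightarrow L^2\p{\XX}$, which on a bounded $\CC^\infty$ domain is of order $j^{-(\alpha+d/2)/d}$ (\cite{edmundsFunctionSpacesEntropy1996}, \cite{carlEntropyCompactnessApproximation1990}; this is also how the analogous assumption is verified in \cite{fischerSobolevNormLearning2020a}). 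Hence $s_j\asymp j^{-(2\alpha+d)/d}=j^{-1/p}$.

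Next I would identify the power spaces. For $\tau\in(0,1)$, the real interpolation description \ref{eqn:real_interpolation}, combined with the fact that the isomorphism $L^2\p{\mu_0}\cong L^2\p{\mu}$ and the identity $\Hbb=H^{\alpha+d/2}\p{\XX}$ pass through the interpolation functor $\p{\cdot,\cdot}_{\tau,2}$, gives
\[
\Hbb_{\mu_0}^\tau \cong \p{L^2\p{\mu_0},\Hbb}_{\tau,2} \cong \p{L^2\p{\XX},H^{\alpha+d/2}\p{\XX}}_{\tau,2} \cong H^{\tau\p{\alpha+d/2}}\p{\XX},
\]
the last norm equivalence being the classical real interpolation of $L^2$-Sobolev spaces on a smooth bounded domain (\cite{adamsSobolevSpaces2003}, \cite{triebelTheoryFunctionSpaces1983}, \cite{lunardiInterpolationTheory2018}). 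Assumption \ref{assumption:EMB} then follows from the Sobolev embedding $H^{\tau\p{\alpha+d/2}}\p{\XX}\hookrightarrow L^\infty\p{\XX}=L^\infty\p{\mu_0}$, which holds precisely when $\tau\p{\alpha+d/2}>d/2$, i.e.\ when $\tau>p$, so every $\tau\in(p,1)$ is admissible. For Assumption \ref{assumption:SRC}: $f_0\in L^\infty\p{\mu}=L^\infty\p{\mu_0}$ by hypothesis, and since $\beta=\tfrac{2\p{\alpha\wedge s}}{2\alpha+d}\in(0,1)$ satisfies $\beta\p{\alpha+d/2}=\alpha\wedge s\le s$, one has $f_0\in H^s\p{\XX}\hookrightarrow H^{\beta\p{\alpha+d/2}}\p{\XX}\cong\Hbb_{\mu_0}^\beta$, so $\norm{f_0}_{\Hbb_{\mu_0}^\beta}<+\infty$. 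Assumption \ref{assumption:MOM} is part of the hypotheses of the corollary.

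It then remains to place $p,\tau,\beta$ into one of the two admissible orderings of Theorem \ref{thm:general_thm_GPs}. Since $\alpha\wedge s\le\alpha$ we always have $\beta+p=\frac{2\p{\alpha\wedge s}+d}{2\alpha+d}\le1$. If $\alpha\wedge s>d/2$ then $\beta>p$, and any $\tau\in(p,\beta)$ realises the chain $0<p<\tau<\beta<\beta+p\le1$ (the case $s>d/2$). If $\alpha\wedge s\le d/2$ then $\beta\le p$, and any $\tau\in(p,\beta+p)$ --- an interval nonempty because $\beta>0$ --- realises $0<\beta\le p<\tau<\beta+p\le1$. In either case, since the mean zero Gaussian process with covariance kernel $k$ is a Gaussian random element of $L^2\p{\mu_0}$ with RKHS $\Hbb$ by standard facts on Gaussian processes, Theorem \ref{thm:general_thm_GPs} applies and yields $\EE_0\Pi\c{\norm{f-f_0}_{L^2\p{\nu}}>M_n\varepsilon_n\,|\,\Xbb^n}\to0$ with $\varepsilon_n=n^{-\beta/2}=n^{-\frac{\alpha\wedge s}{2\alpha+d}}$, which is the assertion.

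The only genuinely substantive inputs are the sharp two-sided bound $s_j\asymp j^{-1/p}$ and the interpolation identity $\p{L^2\p{\XX},H^{\alpha+d/2}\p{\XX}}_{\tau,2}\cong H^{\tau\p{\alpha+d/2}}\p{\XX}$; both are classical on $\CC^\infty$ domains but have to be transported across the equivalence $L^2\p{\mu_0}\cong L^2\p{\mu}$, which is exactly where the two-sided bound on $p_0$ enters and the point at which I would be most careful. The bookkeeping at the borderline parameter values ($s=\alpha$, or $\alpha\wedge s=d/2$) is harmless: the equality $\beta+p=1$ is permitted by Theorem \ref{thm:general_thm_GPs}, and one retains enough freedom in the choice of $\tau$ to keep all the remaining inequalities strict.
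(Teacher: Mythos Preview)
Your proof is correct and follows essentially the same route as the paper: verify Assumptions \ref{assumption:EVD}--\ref{assumption:SRC} with $p=\frac{d}{2\alpha+d}$, $\beta=\frac{2(\alpha\wedge s)}{2\alpha+d}$, identify $\Hbb_{\mu_0}^\tau\cong H^{\tau(\alpha+d/2)}(\XX)$ via real interpolation and the two-sided bound on $p_0$, invoke the Sobolev embedding for Assumption \ref{assumption:EMB}, and then split into the two orderings of Theorem \ref{thm:general_thm_GPs} according to whether $\alpha\wedge s>d/2$ or $\alpha\wedge s\le d/2$. Your treatment is slightly more explicit about the approximation-number argument and the borderline cases, but there is no substantive difference.
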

	
	Corollary \ref{corollary:matern_domains} is obtained by an application of Theorem \ref{thm:general_thm_GPs} by setting $\beta = \frac{2s}{2\alpha +d}$ and $p = \frac{d}{2\alpha + d}$. As in \cite{fischerSobolevNormLearning2020a}, Assumption \ref{assumption:EVD} follows because the eigenvalues $s_j$ can be seen as the square of the approximation numbers of the embedding $\Hbb \hookrightarrow L^2\p{\mu_0}$ (see Equation $4.4.12$ in \cite{carlEntropyCompactnessApproximation1990}), which themselves are upper and lower bounded in \cite{edmundsFunctionSpacesEntropy1996} Chapter $3$. Moreover as in \cite{fischerSobolevNormLearning2020a} we have $\Hbb_\nu^\tau \cong H^{\tau(\alpha+d/2)}\p{\XX}$ for $0 < \tau < 1$, proving that Assumption \ref{assumption:EMB} holds (by the Sobolev embedding theorem, see \cite{adamsSobolevSpaces2003} Chapter $7$) and that Assumption \ref{assumption:SRC} holds as well.
	
	There are many Gaussian processes with RKHS $\Hbb$ satisfying $\Hbb \cong H^{\alpha + d/2}\p{\XX}$ : for instance Mat\'ern processes \cite{kanagawaGaussianProcessesReproducing2025}, but more generally any restriction to $\XX$ of a Gaussian process on $\RR^d$ with RKHS $\tilde \Hbb$ and kernel $\tilde k$ of the form $\tilde k(x,y) = F\p{\norm{x-y}_{\RR^d}}$ for some continuous function $F : [0,\infty) \to \RR$ such that
	\[
	c_1 \p{1 + \norm{\omega}_{\RR^d}^2}^{-(\alpha + d/2)} \leq \hat{\tilde k}\p{\omega} \leq c_2 \p{1 + \norm{\omega}_{\RR^d}^2}^{-(\alpha + d/2)}, \quad \omega \in \RR^d,
	\]
	satisfies $\tilde \Hbb \cong H^{\alpha + d/2}\p{\XX}$ (here by abuse of notations we have identified $\tilde k(x,y)$ and $\tilde k(x-y)$). Indeed, Lemma $5.1$ in \cite{yangBayesianManifoldRegression2016} shows that
	\[
	\norm{h}_\Hbb = \inf_{\substack{\tilde h \in \tilde \Hbb \\ \tilde h_{|\XX} = f}} \norm{\tilde h}_{\tilde \Hbb}, \quad h \in \Hbb,
	\]
	and the results of Chapter $5$ in \cite{triebelTheoryFunctionSpaces1992} imply that
	\[
	h \mapsto \inf_{\substack{\tilde h \in H^{\alpha + d/2}\p{\RR^d} \\ \tilde h_{|\XX} = h}} \norm{\tilde h}_{H^{\alpha + d/2}\p{\RR^d}}
	\]
	is actually an equivalent norm on $H^{\alpha + d/2}\p{\XX}$.\\
	
	Beyond domains of $\RR^d$, extending the conclusion of Corollary \ref{corollary:matern_domains} is of particular interest, for instance to Gaussian processes defined on compact manifolds or even more generally compact Dirichlet spaces that we now introduce, following closely \cite{coulhonHeatKernelGenerated2012,castilloThomasBayesWalk2014}. We consider a metric measured space $\p{\XX,\mu,\rho}$ satifying the following assumptions :
	
	\begin{assumption}\label{assumptions:dirichlet_spaces}
		$\p{\XX,d}$ is compact and has the volume regularity property : for some $C,c,d>0$,
		\[
		cr^d \leq \mu \p{B(x,r)} \leq C r^d, \quad x \in \XX, \quad r>0.
		\]
		We also assume that $\XX$ is equipped with a self adjoint positive linear operator $L$ defined on a dense domain $D \subset L^2 := L^2\p{\XX,\mu}$, and $e^{-tL}$ the semigroup of linear operators with infinitesimal generator $L$ consists of integral operators with kernels $p_t(\cdot,\cdot)$ with respect to the measure $\mu$\footnote{In the sense that $e^{-tL}f = \int f(y) p_t(\cdot,y)\mu(dy), \quad t > 0.$} satisfying :
		\begin{enumerate}
			\item small time Gaussian upper bounds :
			\[
			p_t(x,y) \leq \frac{C \exp \p{-c\rho^2\p{x,y}/t}}{\sqrt{\mu \p{B(x,\sqrt{t})} \mu \p{B(y,\sqrt{t})}}}, \quad x,y \in \XX, \quad 0 < t \leq 1,
			\]
			\item H\"older continuity : for some $\eta > 0$, and for every $0 < t \leq 1, \quad x,y,y' \in \XX$ with $\rho(y,y') \leq \sqrt{t}$,
			\[
			\abs{p_t(x,y) - p_t(x,y')} \leq C \p{\frac{\rho(y,y')}{\sqrt{t}}}^\eta \frac{\exp \p{-c\rho^2(x,y)/t}}{\sqrt{\mu\p{B(x,\sqrt{t})} \mu\p{B(y,\sqrt{t})}}},
			\]
			\item Markov property :
			\[
			\int p_t(x,y) \mu(dy) = 1, \quad x \in \XX, \quad t > 0.
			\]
		\end{enumerate}
	\end{assumption}
	
	The set of assumptions \ref{assumptions:dirichlet_spaces} implies the ones made in \cite{coulhonHeatKernelGenerated2012} and \cite{castilloThomasBayesWalk2014}. Moreover, the compactness of $\p{\XX,d}$ together with the set of assumptions \ref{assumptions:dirichlet_spaces} and Proposition \cite{coulhonHeatKernelGenerated2012} automatically imply $\mu \p{\XX} < +\infty$. The results in \cite{castilloThomasBayesWalk2014} were proven under the normalisation $\mu\p{\XX} = 1$, but readily extend to the more general case $\mu\p{\XX} < +\infty$. The property $\mu\p{\XX} < \infty$ in turn implies the non collapsing condition $(1.3)$ in \cite{coulhonHeatKernelGenerated2012}, as shown in Proposition $2.1$ of the same paper. Thus when working with the set of assumptions \ref{assumptions:dirichlet_spaces}, we may use all of the results from \cite{coulhonHeatKernelGenerated2012} that are valid in the compact case, as well as the ones from \cite{castilloThomasBayesWalk2014}. Several examples of spaces satisfying Assumption \ref{assumptions:dirichlet_spaces} are presented in \cite{coulhonHeatKernelGenerated2012,castilloThomasBayesWalk2014} and include compact manifolds but also $\XX = \c{-1,1}$ equipped with the Jacobi operator. Framing domains of $\RR^d$ as the ones studied in Corollary \ref{corollary:matern_domains} (and others with rougher boundaries) as particular cases of spaces satisfying the set of assumptions \ref{assumptions:dirichlet_spaces} is possible by choosing appropriate boundary conditions for the chosen operator $L$, see the discussion in Section $1.3.2$ of \cite{coulhonHeatKernelGenerated2012}. In particular the arguments developped here should extend to Gaussian processes on $\XX = \c{-1,1}^d$.\\

	As a consequence of the set of assumptions \ref{assumptions:dirichlet_spaces}, Proposition $3.20$ in \cite{coulhonHeatKernelGenerated2012} shows that the spectrum of $L$ is discrete and of the form $0 \leq \lambda_1 \leq \lambda_2 \leq \ldots$,
	\[
	L^2 = \overset{\perp}{\bigoplus}_{j \geq 1} E_{\lambda_j}, \quad E_{\lambda_j} = \ker \p{L - \lambda_j I} \text{ and } \dim \p{E_{\lambda_j}} < \infty.
	\]
	The orthogonal decomposition of $L^2$ in the last display has to be understood in the Hilbert sum sense, see e.g. Chapter 5 in \cite{brezisFunctionalAnalysisSobolev2011a}. Notice that contrariwise to \cite{coulhonHeatKernelGenerated2012}, the sequence $\p{\lambda_j}_{j \geq 1}$ corresponds to the spectrum of $L$ with multiplicities. According to Equation $3.51$ in \cite{coulhonHeatKernelGenerated2012}, the heat kernel $p_t$ has the following representation
	\[
	p_t(x,y) = \sum_{j \geq 1} \lambda_j u_j(x) u_j(y), \quad x,y \in \XX,
	\]
	with uniform convergence over $\XX^2$ and where $\p{u_j}_{j \geq 1}$ is an orthonormal basis of $L^2\p{\mu}$. The basis $\p{u_j}_{j \geq 1}$ can be used to define the Sobolev spaces $H^s\p{\XX}, \quad s>0$ over $\XX$ :
	
	\begin{definition}\label{definition:sobolev_spaces_dirichlet_spaces}
	For $\XX$ a space satisfying the set of assumptions \ref{assumptions:dirichlet_spaces} we define the Sobolev spaces $H^s\p{\XX}, \quad s > 0$ as the Hilbert subspaces of $L^2\p{\mu}$ of elements $f$ satisfying
	\[
	\norm{f}_{H^s\p{\XX}}^2 := \sum_{j \geq 1} \p{1 + \lambda_j}^s \inner{u_j|f}_{L^2\p{\mu}}^2 < + \infty.
	\]
	\end{definition}
	
	In the case where $\XX$ is a compact manifold, the Sobolev spaces $H^s\p{\XX}$ admit equivalent chart definitions, see e.g. \cite{devitoReproducingKernelHilbert2021a} and the references therein. We give in Section \ref{subsection:dirichlet_spaces} additional useful results on compact Dirichlet spaces. Using Definition \ref{definition:sobolev_spaces_dirichlet_spaces} we can now formulate a corollary of Theorem \ref{thm:general_thm_GPs} in this setting :
	
	\begin{corollary}\label{corollary:matern_compact_dirichlet_spaces}
		In the setting of Section \ref{subsection:general_gps}, let $\XX$ be a space satisfying the set of assumptions \ref{assumptions:dirichlet_spaces}, $\mu_0$ have a density $p_0$ with respect to $\mu$ that satisfies $0 < \inf_\XX p_0 \leq \sup_\XX p_0 < +\infty$ and $\Hbb \subset L^2\p{\mu}$ be an RKHS with reproducing kernel $k$ that is equal as set and norm equivalent to the Sobolev space $H^{\alpha + d/2}\p{\XX}$ for some $\alpha > 0$. Then if $f_0 \in L^\infty \p{\mu} \cap H^s\p{\XX}, s>0$, Assumption \ref{assumption:MOM} holds and $f$ is a mean zero Gaussian process over $\XX$ with covariance kernel $k$, with $\Pi \c{df | X,Y}$ the posterior distribution
		\[
		\Pi \c{df | X,Y} \propto \prod_{i=1}^n \p{2\pi \sigma^2}^{-1/2} \exp \p{- \frac{\p{y_i - f(x_i)}^2}{2\sigma^2}} \Pi \c{df},
		\]
		corresponding to the observation model
		\[
		y_i = f(x_i) + \varepsilon_i, \quad \varepsilon \iid \NN\p{0,\sigma^2}.
		\]
		we have, for any $M_n \to \infty$,
		\[
		\EE_0 \Pi \c{\norm{f-f_0}_{L^2\p{\mu_0}} > M_n \varepsilon_n | X,Y} \to 0, \quad \varepsilon_n = n^{-\frac{s \wedge \alpha}{2\alpha + d}},
		\]
	\end{corollary}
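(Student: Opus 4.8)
The plan is to invoke Theorem \ref{thm:general_thm_GPs} with $p = \frac{d}{2\alpha+d}$ and $\beta = \frac{2\p{s\wedge\alpha}}{2\alpha+d}$, so that $\varepsilon_n = n^{-\beta/2} = n^{-\frac{s\wedge\alpha}{2\alpha+d}}$ is the announced rate. With these choices $\beta < 1$, $\beta + p = \frac{2\p{s\wedge\alpha}+d}{2\alpha+d} \le 1$ (equality iff $s \ge \alpha$, using $s\wedge\alpha \le \alpha$) and $\beta + p > p$, so one of the two admissible regimes of Theorem \ref{thm:general_thm_GPs} can be realised by taking $\tau > p$ as close to $p$ as needed. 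Assumption \ref{assumption:MOM} (including $\abs{P_0}_2^2 < \infty$) is assumed, so it remains to verify Assumptions \ref{assumption:EVD}, \ref{assumption:EMB} and \ref{assumption:SRC}. The common backbone is to identify the abstract power spaces $\Hbb_{\mu_0}^\tau$, built from the integral operator $T_{\mu_0}$ attached to the design measure $\mu_0$, with the intrinsic Sobolev spaces $H^{\tau\p{\alpha+d/2}}\p{\XX}$ of Definition \ref{definition:sobolev_spaces_dirichlet_spaces}, built from the spectral calculus of $L$ and the base measure $\mu$. Since $0 < \inf_\XX p_0 \le \sup_\XX p_0 < \infty$, $L^2\p{\mu_0}$ and $L^2\p{\mu}$ agree with equivalent norms; combined with $\Hbb \cong H^{\alpha+d/2}\p{\XX}$ and the fact that norm-equivalent endpoint couples produce norm-equivalent real interpolation spaces, this gives $\p{L^2\p{\mu_0},\Hbb}_{\tau,2} \cong \p{L^2\p{\mu},H^{\alpha+d/2}\p{\XX}}_{\tau,2}$. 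Since $H^\sigma\p{\XX}$ is, by Definition \ref{definition:sobolev_spaces_dirichlet_spaces}, the domain of $\p{1+L}^{\sigma/2}$ with its graph norm, the Dirichlet Sobolev scale is a Hilbert scale and standard Hilbert-scale interpolation gives $\p{L^2\p{\mu},H^{\alpha+d/2}\p{\XX}}_{\tau,2} \cong H^{\tau\p{\alpha+d/2}}\p{\XX}$; with \ref{eqn:real_interpolation} this yields $\Hbb_{\mu_0}^\tau \cong H^{\tau\p{\alpha+d/2}}\p{\XX}$ for $0 < \tau < 1$, the case $\tau = 1$ being the hypothesis $\Hbb \cong H^{\alpha+d/2}\p{\XX}$.

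For Assumption \ref{assumption:EVD} I would argue as follows: the $s_j$ are the eigenvalues of $T_{\mu_0} = I_{\mu_0} I_{\mu_0}^*$, so the $\sqrt{s_j}$ are the approximation numbers of $I_{\mu_0} : \Hbb \hookrightarrow L^2\p{\mu_0}$; by the norm equivalences above and the ideal property of approximation numbers these are comparable to those of $H^{\alpha+d/2}\p{\XX} \hookrightarrow L^2\p{\mu}$, which is diagonal in the basis $\p{u_j}$ with singular values $\p{1+\lambda_j}^{-\p{\alpha+d/2}/2}$. The on-diagonal heat kernel bound $p_t(x,x) \le \frac{C}{\mu\p{B(x,\sqrt t)}} \asymp t^{-d/2}$ from Assumption \ref{assumptions:dirichlet_spaces} integrates to $\sum_{j \ge 1} e^{-t\lambda_j} = \int p_t(x,x)\mu(dx) \lesssim t^{-d/2}$, and a Tauberian argument together with the two-sided spectral estimates of \cite{coulhonHeatKernelGenerated2012} gives the Weyl-type asymptotics $\lambda_j \asymp j^{2/d}$. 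Hence $s_j \asymp \p{1+\lambda_j}^{-\p{\alpha+d/2}} \asymp j^{-\p{2\alpha+d}/d} = j^{-1/p}$, which is Assumption \ref{assumption:EVD}.

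For Assumption \ref{assumption:EMB}, taking $\tau > p$ gives $\tau\p{\alpha+d/2} > p\p{\alpha+d/2} = d/2$, and the Sobolev embedding on compact Dirichlet spaces (Proposition \ref{prop:embeddings}) yields $\Hbb_{\mu_0}^\tau \cong H^{\tau\p{\alpha+d/2}}\p{\XX} \hookrightarrow L^\infty\p{\XX} = L^\infty\p{\mu_0}$; the proof of Proposition \ref{prop:embeddings} rests on the uniform bound $\sup_{x \in \XX} \sum_{j \ge 1}\p{1+\lambda_j}^{-\sigma}\abs{u_j(x)}^2 < \infty$ for $\sigma > d/2$, obtained by writing this series as a Gamma integral of the on-diagonal heat trace $\sum_{j \ge 1} e^{-t\lambda_j}\abs{u_j(x)}^2 = p_t(x,x) \lesssim t^{-d/2}$ and then applying Cauchy--Schwarz. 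For Assumption \ref{assumption:SRC}, $f_0 \in L^\infty\p{\mu}$ holds by hypothesis, and since $\beta\p{\alpha+d/2} = s\wedge\alpha \le s$ we get $f_0 \in H^s\p{\XX} \subseteq H^{s\wedge\alpha}\p{\XX} = H^{\beta\p{\alpha+d/2}}\p{\XX} \cong \Hbb_{\mu_0}^\beta$, so $\norm{f_0}_{\Hbb_{\mu_0}^\beta} < \infty$. All the hypotheses of Theorem \ref{thm:general_thm_GPs} are then in force and its conclusion is exactly the claimed $L^2\p{\mu_0}$ contraction rate.

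The main obstacle, relative to Corollary \ref{corollary:matern_domains} where Triebel's theory of function spaces on smooth domains delivers the interpolation identity, the eigenvalue asymptotics and the Sobolev embedding off the shelf, is that on a general compact Dirichlet space all three facts are only accessible through the spectral calculus of $L$ and must be extracted from Assumption \ref{assumptions:dirichlet_spaces} via the small-time Gaussian heat kernel estimates --- this is the role of the auxiliary results in Section \ref{subsection:dirichlet_spaces}, in particular Proposition \ref{prop:embeddings}. Once that machinery is available the corollary reduces to bookkeeping, the one point needing care being to keep track of the passage from the design measure $\mu_0$ to the base measure $\mu$, which is harmless thanks to the two-sided bound on $p_0$.
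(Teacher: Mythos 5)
Your proposal is correct and follows essentially the same route as the paper's proof: use the two-sided bound on $p_0$ together with $\Hbb \cong H^{\alpha+d/2}(\XX)$ and real interpolation to identify $\Hbb_{\mu_0}^\tau \cong H^{\tau(\alpha+d/2)}(\XX)$, obtain the Weyl-type estimate $\lambda_j \asymp j^{2/d}$ from the heat kernel bounds to verify Assumption \ref{assumption:EVD} via approximation numbers, invoke Proposition \ref{prop:embeddings} for Assumption \ref{assumption:EMB}, check Assumption \ref{assumption:SRC} by inclusion of Sobolev spaces, and then apply Theorem \ref{thm:general_thm_GPs}. The only cosmetic differences are that the paper first reduces to $s \le \alpha$ whereas you carry $s\wedge\alpha$ throughout, and you phrase the spectral asymptotics as a heat-trace Tauberian step while the paper cites Proposition \ref{proposition:growth_eigenpairs_dirichlet_spaces} directly; both are equivalent in substance.
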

	
	The proof of Corollary \ref{corollary:matern_compact_dirichlet_spaces} can be found in Section \ref{appendix:proof_GPs}. It is similar to the one of Corollary \ref{corollary:matern_domains}, although in that case we need to directly estimate the approximation numbers of the embedding $\Hbb \hookrightarrow L^2\p{\mu_0}$, which we do by connecting these with the eigenvalues $\p{\lambda_j}_{j \geq 1}$ of $L$ for which we provide a growth estimate in Lemma \ref{proposition:growth_eigenpairs_dirichlet_spaces} (it is essentially a direct application of the results in \cite{coulhonHeatKernelGenerated2012}).
	
	As for Corollary \ref{corollary:matern_domains}, many Gaussian processes have an RKHS $\Hbb$ that satisfies $\Hbb \cong H^{\alpha + d/2}\p{\XX}$ : by construction the centered Gaussian process with kernel
	\[
	k(x,y) := \sum_{j \geq 1} \p{1 + \lambda_j}^{-(\alpha + d/2)} u_j(x) u_j(y)
	\]
	is one of them (as in that case we have exact equality $\norm{\cdot}_\Hbb = \norm{\cdot}_{H^{\alpha + d/2}\p{\XX}}$), but also
	\[
	k(x,y) := \sum_{j \geq 1} c_1\p{c_2 + c_3\lambda_j}^{-(\alpha + d/2)} u_j(x) u_j(y)
	\]
	for any choices of $c_1,c_2,c_3 > 0$, see Remark $13$ in \cite{rosaPosteriorContractionRates2023}, which readily extends from the setting of compact manifolds to general compact Dirichlet spaces satisfying \ref{assumptions:dirichlet_spaces}. In a more complicated way, if $\XX$ is a compact submanifold of $\RR^D$ of dimension $d < D$ then $\Hbb \cong H^{\alpha + d/2}\p{\XX}$ whenever $f$ is the restriction to $\XX$ of a Gaussian process $\tilde f$ on $\RR^D$ with RKHS $\tilde \Hbb$ that is norm equivalent to the Sobolev space $H^{\alpha + D/2}\p{\RR^D}$, for instance a Mat\'ern process on $\RR^D$ with smoothness parameter $\alpha > 0$ (see \cite{rosaPosteriorContractionRates2023}), or more generally arguing as above a centered Gaussian process $\tilde f$ on $\RR^D$ with kernel $\tilde k$ of the form $\tilde k(x,y) = F\p{\norm{x-y}_{\RR^d}}$ for some continuous function $F : [0,\infty) \to \RR$ and such that
	\[
	c_1 \p{1 + \norm{\omega}_{\RR^d}^2}^{-(\alpha + d/2)} \leq \hat{\tilde k}\p{\omega} \leq c_2 \p{1 + \norm{\omega}_{\RR^d}^2}^{-(\alpha + d/2)}, \quad \omega \in \RR^d.
	\]
	
	\section{Random series priors}\label{section:random_series_priors}
	
	The $L^2\p{\mu_0}$ posterior contraction rates obtained in Section \ref{sec:gaussian_processes} crucially used an application of the matrix Bernstein concentration inequality \ref{theorem:1} in order to lower bound the minimal eigenvalue of the empirical inner product matrix of sufficiently many Mercer eigenfunctions of the kernel operator associated with the Gaussian process at hand. In this section, we show how to simply extend the argument to priors based on random series but with potentially non Gaussian coefficients.
	
	\subsection{General reduction to $\norm{\cdot}_n-$contraction rates}\label{subsec:general_results_random_series}
	
	We consider a measured space $\p{\XX,\mu}$ as well as sequence of functions $\p{u_j}_{j = 1}^J$ (for some $J \geq 1$) and we define the constant $C_J > 0$ by
	\[
	C_J^2 := \norm{\frac{1}{J} \sum_{j=1}^J u_j^2}_{L^\infty\p{\mu}}.
	\]
	We notice that $C_J$ can be bounded by $O(1)$ (or almost) in various cases of interest :
	\begin{enumerate}\label{examples:C_J_random_series}
		\item If the $u_j's$ are uniformly bounded by $M>0$ (e.g. for $\XX = \c{0,1}$, $\p{u_j}_{j \geq 1}$ the trigonometric basis) we have $C_J \leq M^2$.
		\item If $\XX = \c{0,1}^d, d \geq 1$ and $\p{u_j}_{j \geq 1}$ is a wavelet basis on $\XX$ (see Section 4.3.6 in \cite{gineMathematicalFoundationsInfiniteDimensional2015}), then $C_J = O\p{1}$ Indeed, if we label this basis as $\p{\psi_{lk}}_{l \geq -1, k \in I_l}$ where $\# I_l = O\p{2^{ld}}$ then for any index $L$ and point $x \in \XX$ we have (using Proposition 4.2.8 in \cite{gineMathematicalFoundationsInfiniteDimensional2015})
		\[
		\sum_{l=1}^L \sum_{k \in I_l} \psi_{lk}(x)^2 \leq \norm{\sum_{l=1}^L \sum_{k \in I_l} \psi_{lk}(x)\psi_{lk}}_\infty
		\leq \sum_{l=1}^L \norm{ \sum_{k \in I_l} \psi_{lk}(x) \psi_{lk}}_\infty
		\lesssim \sum_{l = 1}^L 2^{ld/2} \max_{k \in I_l} \abs{\psi_{lk}(x)}
		\lesssim \sum_{l=1}^L 2^{ld}
		\lesssim 2^{Ld},
		\]
		which proves the result for $J = \sum_{l=1}^L \#I_l \asymp 2^{Ld}$. For intermediates $J = \sum_{l=1}^{L+1} \# I_{l'}, I_l' \subsetneq I_l$ we use $Q_J(x,x) \leq Q_{J'}(x,x) \lesssim 2^{(L+1)d} \lesssim 2^{Ld} \leq J$, for $J' = \sum_{l=1}^{L+1} \# I_l$.
		\item If $\XX$ is a compact Dirichlet space that satisfies the set of assumptions \ref{assumptions:dirichlet_spaces} as in Section \ref{subsection:examples_GPs} and $\p{u_j}_{j \geq 1}$ the associated eigenbasis of the operator $L$, then we show in Proposition \ref{proposition:growth_eigenpairs_dirichlet_spaces} that $C_J = O\p{1}$.
		\item In the setting of Section \ref{subsection:examples_GPs}, if $\p{\XX,\mu}$ is either an open, connected and bounded domain of $\RR^d$ with $\CC^\infty-$boundary and $\mu$ the Lebesgue measure, or a compact Dirichlet space satisfying the set of assumptions \ref{assumptions:dirichlet_spaces}, $k$ a reproducing kernel over $\XX$ with RKHS $\Hbb$ that is norm equivalent to the Sobolev space $H^{\alpha + d/2}\p{\XX}$ and $\mu_0$ a probability measure over $\XX$ with a density $p_0 = \frac{d\mu_0}{d\mu}$ with respect to $\mu$ that is upper and lower bounded by positive constants, then for $\p{u_j}_{j \geq 1}$ an orthonormal basis of $L^2\p{\mu_0}$ associated with the kernel operator $f \mapsto \int k(\cdot,y)f(y)\mu_0(dy)$ associated with eigenvalues $\lambda_1 \geq \lambda_2 \geq \ldots$, we have $C_J = O\p{J^\delta}$ for any $\delta > 0$, see the discussion at the end of Section \ref{subsection:general_gps}.
	\end{enumerate}
	
	\begin{remark}\label{remark:suboptimality_laplacian_eigenbasis}
	The constant $C_J$ automatically allows to controls the supremum norm of the $u_j's$ :
	\[
	\norm{u_j}_{L^\infty\p{\mu}} \leq C_j \sqrt{j}.
	\]
	This bound is sharp in the the wavelet basis example $2)$ above, but can be pessimistic in some settings of interest : for instance the optimal scaling when $\p{u_j}_{j \geq 1}$ is an orthonormal basis of the Laplace-Beltrami operator on a compact manifold associated with eigenvalues $\p{\lambda_j}_{j \geq 1}$ ordered descreasingly (for which we know that $C_j = O(1)$ according the point $3)$ above) is known to be $\lambda_j^{\frac{d-1}{4}} \asymp j^{\frac{1}{2} - \frac{1}{2d}} = o\p{\sqrt{j}}$ (where we have used the asymptotic estimate $\lambda_j \asymp j^{2/d}$ \ref{proposition:growth_eigenpairs_dirichlet_spaces}), see e.g. \cite{donnellyEigenfunctionsLaplacianCompact2006}. Nevertheless, this suboptimality will not be an obstruction to derive optimal contraction rates, see the results in Section \ref{subsection:hierarchical_GPs_dirichlet_spaces} below.
	\end{remark}
	
	We now consider a probability measure $\mu_0$ on $\XX$ and $\p{X_i}_{i=1}^n \iid \mu_0$. We let $\Sigma_n \in \RR^{J \times J}$ be the empirical inner product matrix of $\p{u_1,\ldots,u_J}$ with respect to the design $\p{X_i}_{i=1}^n$
	\[
	\p{\Sigma_n}_{jk} := \Pro_n \c{u_j u_k} = \frac{1}{n} \sum_{i=1}^n u_j\p{X_i} u_k\p{X_i},
	\]
	as well as $\Sigma$ its population counterpart
	\[
	\Sigma_{jk} = \int u_j(x) u_k(x) p_0(x) \mu(dx).
	\]
	We assume that $\Sigma$ satisfies the following condition
	\begin{equation}\label{eqn:assumption_mu_0_u_j}
	p_- \norm{v}_{\RR^J}^2 \leq v^T \Sigma v \leq p_+ \norm{v}_{\RR^J}^2, \quad v \in \RR^J,
	\end{equation}
	for some $p_-,p_+ > 0$, or equivalently that $\Sigma$ has a spectrum contained in $\c{p_-,p_+}$. This is the case for instance if the $u_j's$ are orthonormal in $L^2\p{\mu_0}$ (and in that case $p_- = p_+ = 1$), but also if they are orthonormal in $L^2\p{\mu}$ and $\mu_0$ has a density $p_0$ with respect to $\mu$ that satisfies
	\[
	0 < p_- = \underset{x \in \XX}{\essinf_\mu} p_0 \leq p_+ = \underset{x \in \XX}{\esssup_\mu} p_0 < +\infty.
	\]
	as in that case
	\[
	v^T \Sigma v = \sum_{j,k = 1}^J v_j v_k \int u_j(x) u_k(x) p_0(x) \mu(dx) = \int \p{\sum_{j=1}^J v_j u_j(x)}^2 p_0(x) \mu(dx),
	\]
	which is itself upper (respectively : lower) bounded by $p_+ \norm{v}_{\RR^J}^2$ (respectively : $p_- \norm{v}_{\RR^J}^2$). More generally this also holds under the same assumption on $p_0$ if $\p{u_j}_{j = 1}^J$ is a frame in $L^2\p{\mu}$ with frame bounds independent of $J$. We can now apply the results in Chapter 5 of \cite{Eldar_Kutyniok_2012} to prove concentration of $\Sigma_n$ around $\Sigma$.
	
	\begin{theorem}\label{theorem:1}
		If $\mu_0$ and $\p{u_j}_{j=1}^J$ satisfy \ref{eqn:assumption_mu_0_u_j}, for a universal constant $c_0>0$ such that if $c_0 n \geq C_J^2 J \ln J \norm{\Sigma}^{-1} \p{p_+/p_-}^2$ we have
		\[
		\Pro_0 \p{\lambda_- \leq \lambda_{\min}\p{\Sigma_n} \leq \lambda_{\max}\p{\Sigma_n} \leq \lambda_+} \geq \Pro_0 \p{\norm{\Sigma_n - \Sigma} \leq \frac{p_-}{2p_+} \norm{\Sigma}} \geq 1 - e^{1-c_0 n\p{p_-/p_+}^2 \norm{\Sigma}/C_J^2 J},
		\]
		where $\lambda_- = p_-/2$ and $\lambda_+ = p_+ + p_-/2$.
	\end{theorem}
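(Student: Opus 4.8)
Set $\phi(x) := \p{u_1(x),\dots,u_J(x)}^T \in \RR^J$, so that $\Sigma = \EE_0\c{\phi(X)\phi(X)^T}$ and $\Sigma_n - \Sigma = \tfrac1n \sum_{i=1}^n Z_i$, where the $Z_i := \phi(X_i)\phi(X_i)^T - \Sigma$ are i.i.d., mean zero, symmetric $J \times J$ random matrices. The plan is to apply the matrix Bernstein inequality (one of the covariance–estimation results collected in Chapter 5 of \cite{Eldar_Kutyniok_2012}) to $\sum_i Z_i$, optimise over the deviation level, absorb the dimensional prefactor using the sample–size hypothesis, and finally pass from a bound on $\norm{\Sigma_n - \Sigma}$ to the eigenvalue localisation via Weyl's inequality.

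First I would assemble the two inputs of matrix Bernstein. By definition of $C_J$ one has the a.s. bound $\norm{\phi(X)}_{\RR^J}^2 = \sum_{j=1}^J u_j(X)^2 \leq C_J^2 J$, hence also $\norm{\Sigma} \leq \EE_0\norm{\phi(X)}_{\RR^J}^2 \leq C_J^2 J$, so that $\norm{Z_i} \leq \norm{\phi(X_i)}_{\RR^J}^2 + \norm{\Sigma} \leq 2 C_J^2 J =: R$ almost surely. For the variance, $Z_i$ being symmetric with mean zero gives $\EE_0 Z_i^2 = \EE_0\c{\norm{\phi(X)}_{\RR^J}^2\, \phi(X)\phi(X)^T} - \Sigma^2 \preceq C_J^2 J\, \EE_0\c{\phi(X)\phi(X)^T} = C_J^2 J\, \Sigma$, whence $\norm{\sum_{i=1}^n \EE_0 Z_i^2} \leq n C_J^2 J \norm{\Sigma} =: v$. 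Matrix Bernstein then yields, for every $t > 0$,
\[
\Pro_0\p{\norm{\textstyle\sum_{i=1}^n Z_i} \geq t} \leq 2J \exp\p{\frac{-t^2/2}{v + Rt/3}}.
\]

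Next I would specialise to $t = ns$ with $s := \tfrac{p_-}{2p_+}\norm{\Sigma}$. Since $p_- \leq p_+$ we have $s \leq \tfrac12\norm{\Sigma}$, so $v + Rns/3 \leq \tfrac43 n C_J^2 J \norm{\Sigma}$, and using $s^2/\norm{\Sigma} = \tfrac14 (p_-/p_+)^2 \norm{\Sigma}$ the exponent simplifies to at most $-c_1\, n (p_-/p_+)^2 \norm{\Sigma}/(C_J^2 J)$ with $c_1 = \tfrac{3}{32}$; that is,
\[
\Pro_0\p{\norm{\Sigma_n - \Sigma} \geq \tfrac{p_-}{2p_+}\norm{\Sigma}} \leq 2J \exp\p{-c_1\, \frac{n (p_-/p_+)^2 \norm{\Sigma}}{C_J^2 J}}.
\]
It remains to absorb the factor $2J$. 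Writing $A := n(p_-/p_+)^2\norm{\Sigma}/(C_J^2 J)$, the hypothesis $c_0 n \geq C_J^2 J \ln J\, (p_+/p_-)^2 \norm{\Sigma}^{-1}$ is exactly $A \geq c_0^{-1}\ln J$; for $J \geq 2$ (the case $J=1$ being trivial, as then $\ln J = 0$) one has $\ln(2J) \leq 2\ln J$, so choosing the universal constant $c_0 \leq c_1/3$ (e.g. $c_0 = 1/64$) gives $\ln(2J) - 1 \leq (c_1 - c_0) A$, i.e. $2J\,e^{-c_1 A} \leq e^{1 - c_0 A}$. This is precisely the claimed tail $1 - e^{1 - c_0 n (p_-/p_+)^2\norm{\Sigma}/(C_J^2 J)}$ for the event $\b{\norm{\Sigma_n-\Sigma} \leq \tfrac{p_-}{2p_+}\norm{\Sigma}}$.

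Finally, on that event Weyl's inequality gives $\abs{\lambda_i\p{\Sigma_n} - \lambda_i\p{\Sigma}} \leq \tfrac{p_-}{2p_+}\norm{\Sigma}$ for all $i$; since $\norm{\Sigma} \leq p_+$ this is $\leq p_-/2$, and combined with $p_- \leq \lambda_{\min}\p{\Sigma} \leq \lambda_{\max}\p{\Sigma} \leq p_+$ from \ref{eqn:assumption_mu_0_u_j} it forces $\lambda_{\min}\p{\Sigma_n} \geq p_-/2 = \lambda_-$ and $\lambda_{\max}\p{\Sigma_n} \leq p_+ + p_-/2 = \lambda_+$, which is the first inequality in the displayed chain. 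The one genuinely delicate step is the prefactor absorption: matrix Bernstein unavoidably carries the dimensional factor $2J$, and the statement demands that a \emph{single} universal constant $c_0$ govern both the sample–size condition and the exponent, so one must keep track of the numerics and take $c_0$ no larger than the smallest constant produced along the way.
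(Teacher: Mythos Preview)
Your proof is correct and follows the same overall strategy as the paper (matrix concentration for the empirical covariance, then eigenvalue perturbation), but differs in two implementation choices worth noting. For the concentration step, the paper invokes Vershynin's covariance-estimation corollary (Corollary~5.44 in \cite{Eldar_Kutyniok_2012}), which already packages the bound as $\norm{\Sigma_n-\Sigma}\le \max\bigl(\norm{\Sigma}^{1/2}t\sqrt{C_J^2J/n},\,C_J^2J/n\bigr)$, and then chooses $t$ and $\varepsilon$; you instead apply raw matrix Bernstein and track the variance and range parameters yourself, which is equivalent but more self-contained. For the eigenvalue localisation, the paper bounds $\lambda_{\max}(\Sigma_n)$ by the triangle inequality but obtains $\lambda_{\min}(\Sigma_n)\ge p_-/2$ via a Neumann series for $\Sigma_n^{-1}$, namely $\norm{\Sigma_n^{-1}}\le \norm{\Sigma^{-1}}\sum_{l\ge 0}\norm{I-\Sigma^{-1}\Sigma_n}^l$; your use of Weyl's inequality handles both bounds simultaneously and is simpler. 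Your explicit tracking of the constants in the prefactor-absorption step (showing that a single $c_0\le c_1/3$ works in both the hypothesis and the exponent) is also cleaner than the paper's treatment, which leaves that verification somewhat implicit.
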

	\begin{proof}
		Let $u$ be the random variable $u = \c{u_1(x),\ldots,u_J(x)}^T, x \sim \mu_0$ with values in $\RR^J$. Then $\norm{u}_{\RR^J} \leq C_J\sqrt{J}$ almost surely and therefore, for any $\varepsilon \in \p{0,1}$ and $t \geq 0$, Corollary 5.44\footnote{Corollary 5.52 requires $t \geq 1$ which is why we use Corollary 5.44 instead} of Vershynin \cite{Eldar_Kutyniok_2012} (combined with the footnote on page 240) implies that for some universal constant $c_0>0$, with probability at least $1 - J e^{-c_0t^2}$,
		\[
		\norm{\Sigma_n - \Sigma} \leq \max \p{\norm{\Sigma}^{1/2}t\sqrt{\frac{C_J^2 J}{n}}, \frac{C_J^2 J}{n}}.
		\]
		If $n \geq \p{t/\varepsilon}^2 \norm{\Sigma}^{-1} C_J^2 J$ we thus have
		\[
		\norm{\Sigma_n - \Sigma} \leq \varepsilon \norm{\Sigma},
		\]
		using $\varepsilon \in \p{0,1}$.
		Since $\Sigma$ and $\Sigma_n$ are symmetric positive definite this implies that
		\[
		\lambda_{\max}\p{\Sigma_n} = \norm{\Sigma_n} \leq \norm{\Sigma - \Sigma_n} + \norm{\Sigma} \leq \p{1+\varepsilon}\norm{\Sigma} = \p{1+\varepsilon}\lambda_{\max}\p{\Sigma} \leq \p{1+\varepsilon}p_+.
		\]
		For the lower bound on $\lambda_{\min}\p{\Sigma_n}$, we first notice that
		\[
		\norm{I - \Sigma^{-1}\Sigma_n} \leq \norm{\Sigma^{-1}} \norm{\Sigma - \Sigma_n} \leq \norm{\Sigma^{-1}} \norm{\Sigma}\varepsilon = \lambda_{\min}\p{\Sigma}^{-1}\lambda_{\max}\p{\Sigma} \leq p_-^{-1} p_+ \varepsilon. 
		\]
		In particular for $\varepsilon < p_- p_+^{-1}$ this implies that $\Sigma^{-1}\Sigma_n = I - \p{I - \Sigma^{-1}\Sigma_n}$ (and therefore $\Sigma_n$ itself) is invertible and
		\[
		\Sigma_n^{-1} \Sigma = \p{\Sigma^{-1}\Sigma_n}^{-1} = \sum_{l \geq 0} \p{I - \Sigma^{-1}\Sigma_n}^l.
		\]
		Therefore we obtain
		\[
		\Sigma_n^{-1} = \sum_{l \geq 0} \p{I - \Sigma^{-1}\Sigma_n}^l \Sigma^{-1},
		\]
		and taking the norm yields
		\[
		\lambda_{\min}\p{\Sigma_n}^{-1} = \norm{\Sigma_n^{-1}} \leq \sum_{l \geq 0} \norm{I - \Sigma^{-1}\Sigma_n}^l \norm{\Sigma^{-1}} \leq p_-^{-1} \sum_{l \geq 0} \p{\varepsilon p_-^{-1} p_+}^l = \frac{p_-^{-1}}{1 - \varepsilon p_-^{-1} p_+}.
		\]
		We now conclude by taking $\varepsilon = \frac{p_-}{2p_+} \in \p{0,1}$ and $t^2$ such that $c_0 t^2 \geq 2\ln J$ and $n \geq \p{t/\varepsilon}^2 \norm{\Sigma}^{-1} C_J^2 J$ : it suffices to choose $t = \frac{n\varepsilon^2\norm{\Sigma}}{C_J^2 J}$, which is satisfies $t^2 \geq 2\ln J/c_0$ by assumption (perhaps for another universal constant $c_0$).
	\end{proof}
	
	Notice that for any function $f = \sum_{j = 1}^J \alpha_j u_j \in V_J := \Span \b{u_j : 1 \leq j \leq J}$ we have
	\[
	p_- \norm{f}_2^2 = p_- \alpha^T \alpha \leq p_- \lambda_-^{-1} \alpha^T \Sigma_n \alpha = 2 \norm{f}_n^2 \iff \norm{f}_2 \leq \sqrt{\frac{2}{p_-}} \norm{f}_n,
	\]
	and therefore
	\[
	\lambda_- \leq \lambda_{\min}\p{\Sigma_n} \leq \lambda_{\max}\p{\Sigma_n} \leq \lambda_+ \implies \forall f \in V_J, \quad \norm{f}_2 \leq \sqrt{\frac{2}{p_-}} \norm{f}_n.
	\]
	Thus, Theorem \ref{theorem:1} allows to bound the $L^2$ norms of functions in $V_J$ by their empirical $L^2$ norms on a high probability event. As a consequence, Theorem \ref{theorem:1} cann allow to extend preliminary $\norm{\cdot}_n-$posterior contraction results to $\norm{\cdot}_2-$ ones with the same rates, provided that, for some $J_n$ that satisfies $C_J^2 J \ln J \lesssim n$, the orthonormal projection of $f-f_0$ on the orthogonal complement of $V_{J_n}$ in $L^2\p{\XX,\mu}$ is sufficiently small on a set of high posterior probability. To this end, for $f : \XX \to \RR$ and $J \geq 1$, define the best uniform approximation error $\Ecal_J\p{f}$ of $f$ by $V_J$ as
	\[
	\Ecal_J\p{f}_\infty := \inf_{h \in V_J} \norm{f-h}_\infty.
	\]
	We then have the following result.
	\begin{theorem}\label{theorem:2}
		If $\mu_0$ and $\p{u_j}_{j=1}^J$ satisfy \ref{eqn:assumption_mu_0_u_j} and for an index $J_n$ satisfying $n \geq c_0 C_{J_n}^2 J_n \ln J_n \norm{\Sigma}^{-1} \p{p_+/p_-}^2$ (where $c_0$ is the constant from Theorem \ref{theorem:1}) we have
		\begin{enumerate}
			\item[1)] $\EE_0 \Pi \c{\norm{f-f_0}_n > \varepsilon_n | \Xbb^n} \xrightarrow[n \to \infty]{} 0$,
			\item[2)] $\Ecal_{J_n}\p{f_0}_\infty < M_1\varepsilon_n$,
			\item[3)] $\EE_0 \Pi \c{\Ecal_{J_n}\p{f}_\infty \geq M_2\varepsilon_n | \Xbb^n} \xrightarrow[n \to \infty]{} 0$,
		\end{enumerate}
		for some $M_1,M_2 > 0$, then for some $M>0$,
		\[
		\EE_0 \Pi \c{\norm{f-f_0}_2 > M\varepsilon_n | \Xbb^n} \xrightarrow[n \to \infty]{} 0.
		\]
	\end{theorem}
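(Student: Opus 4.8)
The plan is to reduce the $\norm{\cdot}_2$ statement to hypothesis 1) by invoking Theorem \ref{theorem:1} to compare $\norm{\cdot}_2$ with $\norm{\cdot}_n$ on the finite-dimensional space $V_{J_n}$, and to absorb whatever lies transverse to $V_{J_n}$ into the uniform approximation errors handled by hypotheses 2) and 3). First I would set $E_n := \b{\lambda_- \leq \lambda_{\min}\p{\Sigma_n} \leq \lambda_{\max}\p{\Sigma_n} \leq \lambda_+}$. Because $J_n$ is assumed to satisfy $n \geq c_0 C_{J_n}^2 J_n \ln J_n \norm{\Sigma}^{-1}\p{p_+/p_-}^2$, Theorem \ref{theorem:1} applies and yields $\Pro_0\p{E_n^c} \to 0$ (the exponent in its bound tends to $-\infty$ under the standing assumption on $J_n$ as $n \to \infty$), while on $E_n$ the implication recorded just before the statement gives $\norm{h}_2 \leq \sqrt{2/p_-}\,\norm{h}_n$ for all $h \in V_{J_n}$.

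The key step is the decomposition $f - f_0 = \p{f-h} + \p{h-h_0} + \p{h_0-f_0}$ with $h, h_0 \in V_{J_n}$ chosen as (near-)best uniform approximants of $f$ and $f_0$, so that $\norm{f-h}_\infty \leq 2\Ecal_{J_n}\p{f}_\infty$ and $\norm{f_0-h_0}_\infty \leq 2\Ecal_{J_n}\p{f_0}_\infty$. The two outer terms I would bound in $L^2$ by their supremum norm via $\norm{g}_2 \leq \mu\p{\XX}^{1/2}\norm{g}_\infty$ -- the constant being finite in all settings considered, and equal to $1$ when the $u_j$'s are orthonormal in a probability measure $\mu_0$. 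The middle term lies in $V_{J_n}$, hence on $E_n$ has $\norm{\cdot}_2$ at most $\sqrt{2/p_-}\,\norm{h-h_0}_n$, and $\norm{h-h_0}_n$ I would bound by $\norm{f-h}_\infty + \norm{f-f_0}_n + \norm{f_0-h_0}_\infty$ using the triangle inequality through $f-f_0$ and $\norm{\cdot}_n \leq \norm{\cdot}_\infty$. Collecting, on $E_n$,
\[
\norm{f-f_0}_2 \leq c\p{\Ecal_{J_n}\p{f}_\infty + \Ecal_{J_n}\p{f_0}_\infty} + \sqrt{2/p_-}\,\norm{f-f_0}_n,
\]
with $c$ depending only on $p_-$ and $\mu\p{\XX}$.

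To finish I would take $M := c\p{M_1 + M_2} + \sqrt{2/p_-}$: combining the last display with hypothesis 2), on $E_n$ one has the inclusion of events (in $f$)
\[
\b{\norm{f-f_0}_2 > M\varepsilon_n} \subseteq \b{\norm{f-f_0}_n > \varepsilon_n} \cup \b{\Ecal_{J_n}\p{f}_\infty \geq M_2\varepsilon_n}
\]
(trivially so on $\b{\Ecal_{J_n}\p{f}_\infty = +\infty}$, which is contained in the second set on the right). Bounding $\Pi\c{\cdot|\Xbb^n}$ by $1$ off $E_n$, a union bound then gives
\[
\Pi\c{\norm{f-f_0}_2 > M\varepsilon_n | \Xbb^n} \leq \ind{E_n^c} + \Pi\c{\norm{f-f_0}_n > \varepsilon_n | \Xbb^n} + \Pi\c{\Ecal_{J_n}\p{f}_\infty \geq M_2\varepsilon_n | \Xbb^n},
\]
and taking $\EE_0$ and using $\Pro_0\p{E_n^c}\to 0$ together with hypotheses 1) and 3) concludes. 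I do not anticipate a real obstacle here: the only point needing a little care is arranging the decomposition so that each residual is simultaneously small in $\norm{\cdot}_2$ and in $\norm{\cdot}_n$ -- which forces the use of uniform best approximants of \emph{both} $f$ and $f_0$ rather than, say, an $L^2$-orthogonal projection, whose Lebesgue constant we do not control -- while the genuine work of this section lies in verifying hypotheses 1)--3) for concrete priors.
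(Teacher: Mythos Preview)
Your proof is correct and follows essentially the same approach as the paper: invoke Theorem~\ref{theorem:1} to get the event $E_n$ on which $\norm{\cdot}_2 \lesssim \norm{\cdot}_n$ on $V_{J_n}$, then use the uniform approximation errors to reduce to the $\norm{\cdot}_n$ contraction. The only cosmetic difference is that the paper picks a single near-best approximant of $f-f_0$ in $V_{J_n}$ via the subadditivity $\Ecal_{J_n}\p{f-f_0}_\infty \leq \Ecal_{J_n}\p{f}_\infty + \Ecal_{J_n}\p{f_0}_\infty$, whereas you pick separate approximants $h,h_0$ of $f,f_0$; the resulting bounds and the final union-bound structure are identical.
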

	\begin{proof}
		The proof is a trivial application of the triangle inequality. Let $A \subset \RR^n$ be the event
		\[
		A = \b{x \in \RR^n : \lambda_- \leq \lambda_{\min}\p{\Sigma_n} \leq \lambda_{\max}\p{\Sigma_n} \leq \lambda_+}, 
		\]
		where $\lambda_- = p_-/2, \quad \lambda_+ = p_+ + p_-/2$. We have for any $M > 0$
		\begin{align*}
			& \EE_0 \Pi \c{\norm{f-f_0}_2 > M\varepsilon_n | \Xbb^n} \\
			& \leq \Pro_0\p{A^c} + \EE_0 \Pi \c{\norm{f-f_0}_n > \varepsilon_n | \Xbb^n} + \EE_0 \Pi \c{\Ecal_{J_n}\p{f}_\infty \geq M_2\varepsilon_n | \Xbb^n} \\
			& + \EE_0 \c{\ind{A}\Pi \c{\Ecal_{J_n}\p{f}_\infty < M_2\varepsilon_n, \norm{f-f_0}_n \leq \varepsilon_n, \norm{f-f_0}_2 > M\varepsilon_n | \Xbb^n}}.
		\end{align*}
		The first three terms are $o(1)$ by $1)$ \& $3)$ and Theorem \ref{theorem:1}. Moreover we have $\Ecal_{J_n}\p{f-f_0}_\infty \leq \Ecal_{J_n}\p{f}_\infty + \Ecal_{J_n}\p{f_0}_\infty$, so that the set
		\[
		\b{f : \Ecal_{J_n}\p{f}_\infty < \varepsilon_n, \norm{f-f_0}_n \leq \varepsilon_n, \norm{f-f_0}_2 > M\varepsilon_n}
		\]
		is actually empty for large $M>0$, which concludes the proof.
	\end{proof}
	
	Let us comment on the assumptions of Theorem \ref{theorem:2} before applying it to concrete examples. Assumption $1)$ is a contraction rate result in the $\norm{\cdot}_n$ semi metric, classically obtained with the testing approach under a smoothness type condition that usually implies $\Ecal_{J_n}\p{f_0}_\infty \lesssim \varepsilon_n$. Therefore, conditions $1)$ \& $2)$ typically come for free if a preliminary contraction rate with respect to the $\norm{\cdot}_n$ semi metric has been obtained. The true assumption is $3)$, which is satisfied for instance if the prior puts all of its mass on $V_{J_n}$, or more generally if the prior probability of $f$ having an approximation error $\Ecal_{J_n}\p{f}_\infty$ larger than $\varepsilon_n$ is sufficiently small, as we will see in Sections \ref{subsection:sieve_priors}, \ref{subsection:hierarchical_GPs_dirichlet_spaces} \& \ref{subsection:extension_rggs}.
	
	At this stage it is natural to wonder if Theorem \ref{theorem:2} could be applied in order to show the results of Section \ref{sec:gaussian_processes}. Actually applying Theorem \ref{theorem:2} to these Gaussian process priors is not straightforward even in favourable situations when the kernel has an explicit Mercer expansion in a reference basis (e.g. : Fourier or wavelets), and the main obstacle for this is Assumption $3)$. Indeed, in the setting of Sections \ref{subsection:examples_GPs} \& \ref{subsection:dirichlet_spaces} consider the $1-$dimensional torus $\XX$ equipped with its Riemannian volume form $\mu$ and its Laplace-Beltrami operator $L$ as a compact Dirichlet space, as well as $\p{u_j}_{j \geq 1}$ an $L^2\p{\mu}-$orthonormal basis of eigenfunctions associated with eigenvalues $\lambda_1 \leq \lambda_2 \leq \ldots$ (here $Lf = f''$ in angular coordinates so that $\p{u_j}_{j \geq 1}$ is simply the Fourier basis and $\lambda_j \asymp j^2$). For the probability distribution of the covariates consider for simplicity the case when $\mu_0 = \mu$. If we try to apply Theorem \ref{theorem:2} with a regression function $f_0 \in B_{\infty \infty}^s\p{\XX}$ for some $s>0$ and $\varepsilon_n \asymp n^{-\frac{s}{2s+1}}$ for a Gaussian process prior over $\XX$ with RKHS $H^{s+1/2}\p{\XX}$ and reproducing kernel
	\[
	k(x,y) = \sum_{j \geq 1} \p{1 + \lambda_j}^{-(s+1/2)} u_j(x) u_j(y),
	\]
	i.e.
	\[
	f = \sum_{j \geq 1} \p{1 + \lambda_j}^{-\frac{s+1/2}{2}} Z_j u_j, \quad Z_j \iid \NN\p{0,1}.
	\]
	Using Proposition \ref{prop:characterisation_besov_spaces} and an application of the testing approach we can prove that that $\varepsilon_n$ is a $\norm{\cdot}_n-$posterior contraction rates at $f_0$. In order to conclude that it is also an $L^2\p{\mu}-$posterior contraction rate at $f_0$ using Theorem \ref{theorem:2} we would then have to show that $\EE_0 \Pi \c{\Ecal_{J_n}\p{f}_\infty > M \varepsilon_n | \Xbb^n} \to 0$ for some $M>0$ and where $n^{\frac{1}{2s+1}} \asymp \varepsilon_n^{-1/s} \lesssim J_n$ is such that $c_0\sqrt{2} \norm{\Sigma}^{-1} \p{p_+/p_-}^2 J_n \ln J_n \leq n$ (here $C_J \leq \sqrt{2}$). Without using the explicit representation of the posterior distribution as in \ref{sec:gaussian_processes}, obtaining such a posteriori control typically relies on an application of the remaining mass theorem (see \cite{castilloBayesianNonparametricStatistics2024} Lemma 1.2), i.e. we aim to show that for any $M,c_1 > 0$ we can find $c_2>0$ such that $\Pi \c{\Ecal_{c_2 J_n}\p{f}_\infty > M \varepsilon_n} \leq e^{-c_1 n \varepsilon_n^2}$. Obtaining such estimate is possible using a simple triangle inequality $\Ecal_J(f)_\infty \lesssim \sum_{j > J} j^{-(s+1/2)} \abs{Z_j} \norm{u_j}_\infty \lesssim \sum_{j > J} j^{-(s+1/2)} \abs{Z_j}$, and a Chernoff bound yields, for all $\lambda > 0$ and some $c>0$,
	\[
	\Pi \c{\Ecal_J(f)_\infty > \varepsilon} \leq \exp\p{-\lambda \varepsilon + cJ^{-2s} \lambda^2},
	\]
	which implies, with $\lambda = \varepsilon J^{2s}/2c$,
	\[
	\Pi \c{\Ecal_J(f)_\infty > \varepsilon} \leq \exp \p{-\varepsilon^2 J^{2s}/4c}.
	\]
	The latter can be made less than $e^{-c_1n\varepsilon_n^2}$ if $J \gtrsim n^{1/2s}$, but $J$ still has to satisfy $J \ln J \lesssim n$, therefore this approach leads to optimal rates only in the regime $s>1/2$. Instead of this simple argument based on the triangle inequality and Chernoff bound, one could also use more involved estimates on the supremum norm of Gaussian process (see e.g. \cite{ghosalFundamentalsNonparametricBayesian2017}), but this still does not seem to allow $s \leq 1/2$. Notice that the problem does not arise if one instead considers the truncated process
	\[
	f = \sum_{j = 1}^J \p{1 + \lambda_j}^{-\frac{s+1/2}{2}} Z_j u_j, \quad Z_j \iid \NN\p{0,1}, \quad n^{\frac{1}{2s+1}} \lesssim J \ll \frac{n}{\ln n},
	\]
	as in this case $\Ecal_J(f)_\infty = 0$ $\Pi-$almost surely (and therefore $\Pi \c{\cdot|\Xbb^n}-$almost surely, $\Pro_0-$almost surely) and $J$ satisfies the assumption of Theorem \ref{theorem:2}. These truncated processes are the object of the next subsection.
	
	\subsection{Sieve priors}\label{subsection:sieve_priors}
	
	We show a first basic example of application of Theorem \ref{theorem:2} in the case of priors based on truncated basis expansions : more precisely, let $\Pi$ be the probability distribution of the random function
	\begin{equation}\label{prior:truncated}
		f = \sum_{j=1}^J Z_j u_j,
	\end{equation}
	where the coefficients $Z_j$ are \iidt sampled according to a probability density $\Psi$ on $\RR$ satisfying the assumption
	\begin{assumption}\label{assumption:3}
		$\Psi$ is positive, continuous and there exists $a_0,c_1,c_2 ,c_3> 0$ such that for any $a \geq a_0$,
		\[
		\int_{\abs{x} > a} \Psi(x)dx \leq c_1 e^{-c_2 a^{c_3}},
		\]
	\end{assumption}
	and $J = J_n \geq 1$ is deterministic and satisfies $n \geq c_0 C^2 J_n \ln J_n \norm{\Sigma}^{-1} \p{p_+/p_-}^2$ (where $c_0$ is the constant from Theorem \ref{theorem:1}) or $J \sim \pi_J$ (random truncation) with, for some $a_1,a_2,b_1,b_2>0$,
	\begin{equation}\label{prior:J}
		\forall j \geq 1, a_1 e^{-b_1 j L_j} \leq \pi_J(j) \leq a_2 e^{-b_2 j L_j}, \quad L_j = 1 \text{ or } \ln j.
	\end{equation}
	In particular, the standard Gaussian density satisfies Assumption \ref{assumption:3}, and in the case of a random truncation $\pi_J$ can be either Poisson ($L_j = \ln j$) or geometric ($L_j = 1$). The role of the truncation $J$ is at a high level to adjust bias and variance of the method, as more basis functions will be needed to precisely estimate rougher regression functions. Priors defined as in Equation \ref{prior:truncated} are commonly referred to as \textit{sieve priors}, see \cite{arbelBayesianOptimalAdaptive2013} and the references therein.
	
	We make the following smoothness assumption on $f_0$ :
	
	\begin{assumption}\label{assumption:4}
		There exists $K,d,s > 0$ such that, for any $J \geq 1$ we have $\Ecal_J\p{f_0}_\infty \leq K J^{-s/d}$.
	\end{assumption}
	
	Assumption \ref{assumption:4} quantifies the quality of approximation of $f_0$ in the basis $\p{u_j}_{j \geq 1}$. For instance, if $\XX = \c{0,1}^d$ and $\p{u_j}_{j \geq 1}$ is an $S-$regular wavelet basis (see Chapter 4  \cite{gineMathematicalFoundationsInfiniteDimensional2015}) and $s < S$, Assumption \ref{assumption:4} holds whenever $f_0$ is $s-$H\"older (or more generally if $f_0$ is an element of the Besov space $B_{\infty \infty}^s\p{\XX}$), and in that case $d$ is simply the dimension of $\XX$. Similarly, for a compact Dirichlet space $\XX$ as in Section \ref{subsection:examples_GPs} that satisfies the set of Assumptions \ref{assumptions:dirichlet_spaces} and for $\p{u_j}_{j \geq 1}$ an orthonormal basis of $L^2\p{\mu}$ associated with the (non increasingly ordered) eigenvalues $\p{\lambda_j}_{j \geq 1}$ of the operator $L$, Proposition \ref{prop:characterisation_besov_spaces} shows that Assumption \ref{assumption:4} holds if and only if $f \in B_{\infty \infty}^s$, where the Besov space $B_{\infty \infty}^s$ is defined in Section \ref{subsection:dirichlet_spaces}. We then obtain the following result as an application of the testing approach and Theorem \ref{theorem:2} :
	
	\begin{theorem}\label{theorem:3}
		For a prior generated as \ref{prior:truncated} and satisfying Assumption \ref{assumption:3} on the density $\Psi$ with either a deterministic or a random truncation (in which case $\pi_J$ satisfies \ref{prior:J}), $\mu_0$ and $\p{u_j}_{j=1}^J$ satisfying \ref{eqn:assumption_mu_0_u_j} and a true regression function satisfying Assumption \ref{assumption:4}, if $C_j = o \p{j^{s/d}}$ we have
		\[
		\EE_0 \Pi \c{\norm{f-f_0}_{L^2\p{\mu_0}} > M \varepsilon_n|\Xbb^n} \to 0,
		\]
		for some $M > 0$ large enough, where
		\[
		\varepsilon_n = \begin{cases}
			J^{-s/d} + \sqrt{\frac{J \ln n}{n}} \text{ in the deterministic $J$ case,} \\
			\p{n/ \ln n}^{-\frac{s}{2s+d}} \text{ in the random $J$ case.}
		\end{cases}
		\]
	\end{theorem}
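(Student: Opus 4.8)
The plan is to deduce Theorem~\ref{theorem:3} from Theorem~\ref{theorem:2}: it is enough to exhibit an index $J_n$ obeying the quantitative precondition of that theorem and such that its conditions $1)$, $2)$ and $3)$ hold at the claimed rate $\varepsilon_n$. In the deterministic case take $J_n$ to be the prescribed truncation $J$, for which the precondition $n \geq c_0 C_{J_n}^2 J_n \ln J_n \norm{\Sigma}^{-1}\p{p_+/p_-}^2$ is assumed; in the random case take $J_n$ to be a large constant multiple of $\p{n/\ln n}^{d/(2s+d)}$ and verify the precondition from $C_j = o\p{j^{s/d}}$, which gives $C_{J_n}^2 J_n\ln J_n = o\p{J_n^{1+2s/d}\ln J_n} = o(n)$. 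In both cases $\varepsilon_n^2 \asymp J_n^{-2s/d} + J_n\ln n/n$ is, up to constants, the rate in the statement, and condition~$2)$ of Theorem~\ref{theorem:2} is immediate: $\Ecal_{J_n}\p{f_0}_\infty \leq K J_n^{-s/d}\lesssim\varepsilon_n$ by Assumption~\ref{assumption:4}.

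For condition~$1)$ --- a preliminary $\norm{\cdot}_n$-posterior contraction rate $\varepsilon_n$ at $f_0$ --- I would invoke the testing approach of \cite{ghosalFundamentalsNonparametricBayesian2017}, which needs three inputs. First, a prior mass (Kullback--Leibler) bound: approximating $f_0$ by a best $\norm{\cdot}_\infty$-approximant in $V_{J_0}$ for a suitable $J_0$ (Assumption~\ref{assumption:4} bounds the bias by $\lesssim\varepsilon_n$, and \ref{eqn:assumption_mu_0_u_j} forces $\max_{j\le J_0}\norm{u_j}_\infty\le C_{J_0}\sqrt{J_0}$, hence bounded coefficients), the positivity and continuity of $\Psi$ together with the lower bounds in Assumption~\ref{assumption:3} and \ref{prior:J} give $\Pi$-mass at least $e^{-\tilde c\, n\varepsilon_n^2}$ for a Kullback--Leibler neighbourhood of $f_0$; here $C_j = o\p{j^{s/d}}$ ensures $\log C_{J_0}\lesssim\ln n$, so the $J_0\log C_{J_0}$ contribution is absorbed. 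Second, a sieve $\FF_n = \b{\sum_{j\le\bar J_n}Z_j u_j : \abs{Z_j}\le B_n}$ with $\bar J_n$ and polynomial $B_n$ chosen, using the Weibull-type tail in Assumption~\ref{assumption:3} and the upper bound in \ref{prior:J}, so that $\Pi\c{\FF_n^c}$ is super-exponentially small while $\log N\p{\xi\varepsilon_n,\FF_n,\norm{\cdot}_n}\lesssim\bar J_n\ln n\lesssim n\varepsilon_n^2$; in the geometric-truncation case ($L_j=1$) this forces a slicing of $\FF_n$ over the truncation level, as in \cite{arbelBayesianOptimalAdaptive2013}. Third, the testing condition \ref{eqn:testing_condition} for $\norm{\cdot}_n$ under the Gaussian likelihood, cf.\ \cite{ghosalFundamentalsNonparametricBayesian2017} Lemma~8.27 and Appendix~D. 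Together these give $\EE_0\Pi\c{\norm{f-f_0}_n > M\varepsilon_n\mid\Xbb^n}\to 0$.

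It remains to check condition~$3)$. In the deterministic case $f\in V_{J_n}$ $\Pi$-almost surely, so $\Ecal_{J_n}\p{f}_\infty=0$ and nothing is needed. In the random case $\Ecal_{J_n}\p{f}_\infty=0$ whenever $J\le J_n$, so it suffices to show $\EE_0\Pi\c{J>J_n\mid\Xbb^n}\to 0$, that is, the posterior of the truncation level concentrates below $J_n\asymp\p{n/\ln n}^{d/(2s+d)}$. I would combine the standard evidence lower bound for the denominator of Bayes' formula (it is $\geq e^{-cn\varepsilon_n^2}$ on a $\Pro_0$-probability-$(1-o(1))$ event, by the prior mass estimate above) with an upper bound on the part of the numerator coming from $\b{J=j}$ for $j>J_n$: on the event of Theorem~\ref{theorem:1} (applied at a cut-off $J^\ast$ below which $\Sigma_n$ is still well conditioned), an explicit determinant/Laplace estimate shows that integrating the likelihood against the conditional prior of a model of dimension $J_n<j\le J^\ast$ costs of order $\tfrac12 j\ln n$, overwhelming the prior weight $\pi_J(j)$, while the range $j>J^\ast$ is controlled by the first-moment identity that $\EE_0$ of the likelihood ratio at any fixed $f$ equals $1$ together with the tail of $\pi_J$. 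Taking the constant in $J_n$ large enough then makes the whole numerator $\ll e^{-cn\varepsilon_n^2}$, whence $\EE_0\Pi\c{J>J_n\mid\Xbb^n}\to 0$; an application of Theorem~\ref{theorem:2} finishes the proof.

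I expect this last step to be the main obstacle: for general (possibly non-Gaussian) $\Psi$ one must pin the posterior on the truncation level down to the scale $\p{n/\ln n}^{d/(2s+d)}$ rather than the a priori coarser $n\varepsilon_n^2$, and keep this control compatible with the quantitative precondition of Theorem~\ref{theorem:2}; it is precisely here, and in the prior mass and precondition computations, that the hypothesis $C_j = o\p{j^{s/d}}$ is genuinely used. The remaining steps --- the testing condition for $\norm{\cdot}_n$, the prior mass bound, and condition~$2)$ --- are routine once $J_n$ is identified.
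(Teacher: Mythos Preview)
Your reduction to Theorem~\ref{theorem:2} and your verification of conditions~$1)$ and~$2)$ match the paper's argument closely. The difference is in condition~$3)$, and here you are making the problem harder than it is.

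You propose, in the random-$J$ case, to control $\EE_0\Pi\c{J>J_n\mid\Xbb^n}$ by a marginal-likelihood argument: an evidence lower bound on the denominator, a Laplace/determinant estimate for the contribution of each model dimension $j$ in a range $J_n<j\le J^\ast$, and a first-moment bound for $j>J^\ast$. You correctly flag this as the delicate step, especially for non-Gaussian $\Psi$. The paper avoids all of this. The sieve $\FF_n=\b{\sum_{j\le uJ_n}a_ju_j:\abs{a_j}\le(vn\varepsilon_n^2)^{1/c_3}}$ that has already been built for the testing/entropy step satisfies, by choice of $u,v$ large enough, $\Pi\c{\FF_n^c}\le e^{-(c+4)n\varepsilon_n^2}$ for the very same constant $c$ appearing in the prior mass bound. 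The remaining mass theorem (Lemma~1.2 in \cite{castilloBayesianNonparametricStatistics2024}) then gives $\EE_0\Pi\c{\FF_n^c\mid\Xbb^n}\to 0$ with no likelihood computation whatsoever. Since $\FF_n\subset V_{uJ_n}$, this is exactly $\EE_0\Pi\c{\Ecal_{uJ_n}(f)_\infty>0\mid\Xbb^n}\to 0$, i.e.\ condition~$3)$ at the cut-off $uJ_n$, which still satisfies the precondition of Theorem~\ref{theorem:2} because $C_j=o\p{j^{s/d}}$.

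So your worry that one must ``pin the posterior on the truncation level down to the scale $(n/\ln n)^{d/(2s+d)}$ rather than the a priori coarser $n\varepsilon_n^2$'' is addressed not by sharper likelihood analysis but by recycling the sieve: the prior tail bound on $\FF_n^c$ required for the testing approach is already strong enough for the remaining mass theorem to transfer to the posterior. Your Laplace/determinant route might be made to work for Gaussian $\Psi$, but as you note it is unclear for general $\Psi$ satisfying only Assumption~\ref{assumption:3}; the paper's argument sidesteps this completely and is both shorter and insensitive to the shape of $\Psi$.
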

	\begin{proof}
		The testing approach yields a posterior contraction rate with respect to the $\norm{\cdot}_n$ at rate $\varepsilon_n$, while the remaining mass theorem (see e.g. \cite{castilloBayesianNonparametricStatistics2024} Lemma 1.2) implies that $\EE_0 \Pi \c{\Ecal_{J_n}\p{f}_\infty \leq M\varepsilon_n|\Xbb^n}$ for some $M$, where $C_{J_n}^2 J_n \ln J_n = o\p{n}$. Since in addition $f_0$ satisfies Assumption \ref{assumption:4}, Theorem \ref{theorem:3} is a direct consequence of Theorem \ref{theorem:2}. We defer the full proof to Section \ref{appendix:proof_random_series}.
	\end{proof}
	
	\subsection{Square exponential Gaussian processes on compact Dirichlet spaces}\label{subsection:hierarchical_GPs_dirichlet_spaces}
	
	In this section, we demonstrate through an example how the results of Section \ref{subsection:sieve_priors} can extend to infinite series. More precisely, we show how to extend the results in \cite{castilloThomasBayesWalk2014} to $L^2\p{\mu_0}$ contraction, using the exponential decay on the coefficients induced by the prior. The work in \cite{castilloThomasBayesWalk2014} considers a compact Dirichlet space as in Section \ref{subsection:examples_GPs} that satisfies \ref{assumptions:dirichlet_spaces} and construct a prior $\Pi$ on regression functions $f$ as a hierarchical Gaussian process, where conditional on a random $t \sim \pi_1$ on $\p{0,\infty}$, the distribution of $f$ is a (centered) Gaussian process on $\XX$ with covariance kernel
	\[
	k_t(x,y) := \sum_{j \geq 1} e^{-t\lambda_j} u_j(x)u_j(y),
	\]
	where $\p{u_j}_{j \geq 1}$ is an orthonormal basis of $L^2\p{\mu}$, $\lambda_j$ is the eigenvalue of $\LL$ associated with $u_j$, ordered such that $\lambda_1 \geq \lambda_2 \geq \ldots$. In particular, if $\XX$ is a compact manifold, $\mu$ its Riemannian volume element and $\LL$ is the Laplace-Beltrami operator, $k_t$ is the heat kernel of $\XX$ with respect to $\mu$ (see e.g. \cite{grigoryanHeatKernelAnalysis2013}), but the framework allows more general situations. It is then shown in \cite{castilloThomasBayesWalk2014} that under the nonparametric regression model \ref{model:nonparametric_regression}, for a prior $\pi_1$ on $t$ having a density (still denoted by $\pi_1$) with respect to the Lebesgue measure satisfying
	\begin{equation}\label{eqn:prior_t_castillo}
		c_1 t^{-a} e^{-t^{-d/2} \ln^{1+d/2}\p{1/t}} \leq \pi_1(t) \leq c_2 t^{-a} e^{-t^{-d/2} \ln^{1+d/2}\p{1/t}} \text{ for some $c_1,c_2,a>0$},
	\end{equation}
	a uniformly continuous and bounded regression function $f_0 \in B_{\infty \infty}^s, \quad s > 0$ (see \ref{subsection:dirichlet_spaces} for a definition of the Besov spaces $B_{\infty \infty}^s$), the posterior distribution contracts at rate $\p{n/\ln n}^{-\frac{s}{2s+d}}$ with respect to the empirical $L^2$ norm $\norm{\cdot}_n$. Notice that according to Proposition \ref{prop:characterisation_besov_spaces}, $f_0$ satisfies $\Ecal_J\p{f_0}_\infty \lesssim J^{-s/d}$. Using Theorem \ref{theorem:2} we can actually extend this posterior contraction result to the $L^2\p{\mu_0}$ norm.
	
	\begin{theorem}\label{theorem:L2_norm_castillo}
		In the context of \cite{castilloThomasBayesWalk2014} and assumption \ref{eqn:assumption_mu_0_u_j} on $\mu_0$ (for instance : $\mu_0$ admits a density $\frac{d\mu_0}{d\mu}$ with respect to $\mu$ that is upper and lower bounded by constants), for $M>0$ large enough the posterior distribution satisfies
		\[
		\EE_0 \Pi \c{\norm{f-f_0}_{L^2\p{p_0}} > M \varepsilon_n | \Xbb^n} \to 0, \quad \varepsilon_n = \p{\frac{\ln n}{n}}^{\frac{s}{2s+d}}.
		\]
	\end{theorem}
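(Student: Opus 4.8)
The plan is to derive the statement from Theorem~\ref{theorem:2}, taking $\p{u_j}_{j\ge1}$ to be the $L^2\p{\mu}$-orthonormal eigenbasis of $\LL$ (so $V_{J_n}=\Span\b{u_1,\dots,u_{J_n}}$ is the ``low-frequency'' subspace on which the prior concentrates its mass), the truncation level $J_n\asymp n/\ln n$ with a small enough implicit constant, and $\varepsilon_n=\p{\ln n/n}^{\frac{s}{2s+d}}$. The structural requirement $n\ge c_0 C_{J_n}^2 J_n\ln J_n\norm{\Sigma}^{-1}\p{p_+/p_-}^2$ holds because $C_J=O(1)$ for this basis (Proposition~\ref{proposition:growth_eigenpairs_dirichlet_spaces}) while $\norm{\Sigma}$ and $p_\pm$ are fixed positive constants under~\ref{eqn:assumption_mu_0_u_j}, so it reduces to $J_n\ln J_n\lesssim n$. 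Condition~$1)$ of Theorem~\ref{theorem:2} is exactly the $\norm{\cdot}_n$-contraction at rate $\varepsilon_n$ established in \cite{castilloThomasBayesWalk2014}. Condition~$2)$ follows from $f_0\in B_{\infty\infty}^s$ via $\Ecal_J\p{f_0}_\infty\lesssim J^{-s/d}$ (Proposition~\ref{prop:characterisation_besov_spaces}) together with $J_n^{-s/d}\lesssim\p{n/\ln n}^{-\frac{s}{2s+d}}=\varepsilon_n$, which holds since $s/d\ge s/(2s+d)$. Finally, since $\p{u_j}$ is orthonormal in $L^2\p{\mu}$ the metric in the conclusion of Theorem~\ref{theorem:2} is $\norm{\cdot}_{L^2\p{\mu}}$, equivalent to $\norm{\cdot}_{L^2\p{p_0}}=\norm{\cdot}_{L^2\p{\mu_0}}$ because $p_0$ is bounded away from $0$ and $\infty$; so it suffices to verify the hypotheses of Theorem~\ref{theorem:2}.

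The substance is condition~$3)$: $\EE_0\Pi\c{\Ecal_{J_n}\p{f}_\infty\ge M_2\varepsilon_n|\Xbb^n}\to0$. I would obtain it through the remaining-mass theorem (\cite{castilloBayesianNonparametricStatistics2024}, Lemma~1.2): since the Kullback--Leibler prior-mass condition at $f_0$ with rate $\varepsilon_n$ is already part of the analysis of \cite{castilloThomasBayesWalk2014}, it is enough to produce a \emph{prior} tail bound $\Pi\c{\Ecal_{J_n}\p{f}_\infty>M_2\varepsilon_n}\le e^{-c_1 n\varepsilon_n^2}$ with $c_1$ larger than the threshold dictated by that prior-mass constant. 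I would split over the hierarchical bandwidth $t\sim\pi_1$. For a cutoff $\tau_n\asymp\p{n/\ln n}^{-\frac{2}{2s+d}}$ (possibly multiplied by a small constant), the region $t\le\tau_n$ carries prior mass at most $c_2\int_0^{\tau_n}t^{-a}e^{-t^{-d/2}\ln^{1+d/2}\p{1/t}}dt\le e^{-c_3\tau_n^{-d/2}\ln^{1+d/2}\p{1/\tau_n}}$; since $n\varepsilon_n^2\asymp n^{\frac{d}{2s+d}}\p{\ln n}^{\frac{2s}{2s+d}}$ while $\tau_n^{-d/2}\ln^{1+d/2}\p{1/\tau_n}\asymp n^{\frac{d}{2s+d}}\p{\ln n}^{1+\frac{d}{2}-\frac{d}{2s+d}}$ and the exponent $1+\frac d2-\frac{d}{2s+d}\ge1>\frac{2s}{2s+d}$, this is $\le e^{-c_1 n\varepsilon_n^2}$ for $n$ large (shrinking the constant in $\tau_n$ if needed to boost $c_1$). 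On $t\ge\tau_n$ I condition on $t$, write $f-\sum_{j\le J_n}e^{-t\lambda_j/2}Z_j u_j=\sum_{j>J_n}e^{-t\lambda_j/2}Z_j u_j$ with $Z_j\iid\NN\p{0,1}$, and use $\norm{u_j}_\infty\le C\sqrt j$ (a consequence of $C_J=O(1)$) together with $\lambda_j\asymp j^{2/d}$ (Proposition~\ref{proposition:growth_eigenpairs_dirichlet_spaces}) to get
\[
\Ecal_{J_n}\p{f}_\infty\le\sum_{j>J_n}e^{-t\lambda_j/2}\sqrt j\,\abs{Z_j},
\]
a nonnegative subexponential random variable whose mean and $\psi_1$-norm are both bounded, up to a polynomially growing factor in $n$, by $e^{-c_4 t J_n^{2/d}}\le e^{-c_4\tau_n J_n^{2/d}}$. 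Since $\tau_n J_n^{2/d}\gtrsim n^{\theta}$ for some $\theta=\theta(s,d)>0$ (up to logarithmic factors), a Chernoff/Bernstein bound gives $\Pi\c{\Ecal_{J_n}\p{f}_\infty>M_2\varepsilon_n|t}\le e^{-c_1 n\varepsilon_n^2}$ with an enormous margin, uniformly in $t\ge\tau_n$. Integrating over $t\sim\pi_1$ combines the two regions and yields the desired prior tail bound; Theorem~\ref{theorem:2} then concludes.

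The main obstacle is exactly the balancing in condition~$3)$: one must choose $\tau_n$ small enough that the Gaussian tail $\sum_{j>J_n}e^{-t\lambda_j/2}\sqrt j\abs{Z_j}$ is negligible \emph{for every} $t\ge\tau_n$ given the largest admissible truncation $J_n\asymp n/\ln n$ --- this is precisely where the super-exponential decay of $\pi_1$ near the origin is used, and it is what removes the spurious $s>d/2$-type restriction that appeared for non-hierarchical priors in the discussion of Section~\ref{subsec:general_results_random_series} --- while keeping $\pi_1([0,\tau_n])$ below $e^{-c_1 n\varepsilon_n^2}$ with $c_1$ big enough for the remaining-mass theorem. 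Matching the logarithmic powers in these two competing requirements (which, as computed above, do line up because $1+d/2>2s/(2s+d)$) and tracking the prior-mass constant inherited from \cite{castilloThomasBayesWalk2014} is the only genuinely delicate point. Should the crude triangle-inequality bound on $\Ecal_{J_n}\p{f}_\infty$ ever prove too lossy, it can be sharpened by a Dudley-type entropy bound for the conditional Gaussian field, exploiting the volume regularity of $\XX$ from~\ref{assumptions:dirichlet_spaces}, but the estimate above already leaves ample room.
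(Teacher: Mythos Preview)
Your proposal is correct and follows essentially the same route as the paper: apply Theorem~\ref{theorem:2} with the eigenbasis $\p{u_j}$, verify condition~$3)$ via the remaining-mass theorem by splitting over the hyperprior bandwidth $t$ at a threshold of order $n^{-2/(2s+d)}$ (up to logs), and control the conditional tail series $\sum_{j>J_n}e^{-t\lambda_j/2}\sqrt{j}\,\abs{Z_j}$ by a Chernoff-type bound using $\norm{u_j}_\infty\lesssim\sqrt j$ and $\lambda_j\asymp j^{2/d}$. The only differences are cosmetic: you take the maximal $J_n\asymp n/\ln n$ whereas the paper takes the smaller $J_n\asymp n\varepsilon_n^2$ (both choices work, yours giving more slack in condition~$3)$), and your intermediate inequality $1+\tfrac d2-\tfrac{d}{2s+d}\ge1$ actually fails when $d=1$ and $s<1/2$, though the comparison you need, $1+\tfrac d2-\tfrac{d}{2s+d}>\tfrac{2s}{2s+d}$, is equivalent to $d/2>0$ and holds unconditionally.
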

	\begin{proof}
		The proof is essentially the same as the one of Theorem \ref{theorem:3} : the testing approach yields $\norm{\cdot}_n$ contraction at rate $\varepsilon_n$, the remaining mass theorem implies $\EE_0 \Pi \c{\Ecal_{J_n}\p{f} \leq M\varepsilon_n | \Xbb^n}$ for some $M>0$ and $C_{J_n}^2 J_n \ln J_n = o\p{n}$ thanks to the exponentially decaying prior on the coefficients, so that Theorem \ref{theorem:L2_norm_castillo} is a direct consequence of Theorem \ref{theorem:2}. We defer the full proof to Section \ref{subsection:hierarchical_GPs_dirichlet_spaces}.
	\end{proof}
	
	\begin{remark}
		Although the upper bounds on the posterior contraction rates presented in \cite{castilloThomasBayesWalk2014} are formulated for (hierarchical) Gaussian process priors, these would also hold for generic random series priors as the ones in Section \ref{subsection:sieve_priors}, perhaps up to additional logarithmic factors. As a consequence similar $L^2\p{\mu_0}$ posterior contraction rates for these priors could be derived in the same way by applying Theorem \ref{theorem:2}.
	\end{remark}
	
	\subsection{Extension to priors based on graph Laplacian regularisation}\label{subsection:extension_rggs}
	
	In this section, we show how to derive an analog of Theorem \ref{theorem:3} in the context of graph based semi-supervised learning, where the base space $\XX$ is replaced by a set of random covariates $\XX_N := \b{X_1,\ldots,X_N}, N \geq n$ containing the design $\b{X_1,\ldots,X_n}$ and the basis $\p{u_j}_{j \geq 1}$ is learned from the covariates themselves, see e.g. \cite{greenMinimaxOptimalRegression2021,ficheraImplicitManifoldGaussian2023,dunsonGraphBasedGaussian2022,sanz-alonsoUnlabeledDataHelp2022,rosaNonparametricRegressionRandom2024a}. More precisely, we adopt the setting considered in \cite{rosaNonparametricRegressionRandom2024a} : $\MM \subset \RR^D$ is a compact, connected, $\alpha-$H\"older submanifold of $\RR^D$, and $X_1,\ldots,X_N$ are \iidt samples of a probability distribution $\mu_0$ having a density $p_0$ with respect to the Riemannian volume element $\mu$ of $\MM$ satisfying $p_- \leq p_0 \leq p_+$ for some $p_-,p_+>0$. We assume a nonparametric regression model
	\begin{equation}\label{eqn"nonparametric_regression_rggs}
		Y_i = f_0\p{X_i} + \varepsilon_i, \quad 1 \leq i \leq n \text{ for some } f_0 : \MM \to \RR,
	\end{equation}
	where importantly only $n \leq N$ of the covariates are labeled with $Y_i's$, and where $N$ satisfies $N \leq n^B$ for a finite (but arbitrarily large) $B>0$. We then define a random geometric graph with vertices $\XX_N $ by deciding $x$ and $y$ to be neighbours (and we write $x \sim y$) if and only if $\norm{x-y}_{\RR^D} < h$ for some $h>0$ (the graph then obviously depends on the chosen bandwidth $h$). We consider the normalised graph Laplacian operator $\LL$ defined by
	\[
	\p{\LL f}(x) := \frac{1}{h^2 \mu_x} \sum_{y \sim x} \p{f(x) - f(y)},
	\]
	for any $f : \XX_N \to \RR$ and $x \in \XX_N$. $\LL$ is a self-adjoint operator on $L^2\p{\nu}$, where $\nu$ is the normalised degree measure on $\XX_N$, i.e. $\nu_y := \# \b{x \in \XX_N : x \sim y}/\sum_y \# \b{x \in \XX_N : x \sim y}$. Since $\LL$ is self-adjoint in the $N-$dimensional Hilbert space $L^2\p{\nu}$, it can be diagonalised in an orthonormal basis $\p{u_j}_{j=1}^N$ with associated eigenvalues $\lambda_0 \leq \lambda_1 \leq \ldots \leq \lambda_N$. Notice that the eigenpairs depend on the covariates $\XX_N$ and the choice of $h$, but we do not write this explicitly for ease of notations. The prior $\Pi$ is then the probability distribution of the random serie
	\begin{equation}\label{prior:rggs}
		f = \sum_{j=1}^J Z_j u_j,
	\end{equation}
	where $\p{h,J}$ is drawn according to some prior distribution $\pi_1$ and, given $\p{h,J}$, the coefficients $Z_j$ are sampled \iidt according to a probability density $\Psi$ on $\RR$ satisfying Assumption \ref{assumption:3}. More specifically, for the choice of $\pi_1$ we set $h = h_J := \frac{J^{-1/d}}{\ln^{\tau/d} n}$ for some $\tau > 0$ and for the prior on $J$ we either choose it deterministic at some value $J_n \in \b{1,\ldots,N}$ (i.e. fixed hyperparameter), or we set $J \sim \pi_J$ with $\pi_J$ satisfies Assumption \ref{prior:J}. As in Sections \ref{subsection:sieve_priors} \& \ref{subsection:hierarchical_GPs_dirichlet_spaces} and \cite{rosaNonparametricRegressionRandom2024a}, choosing a randomised prior on $\p{h,J}$ is important in order to achieve adaptive posterior contraction rates with respect to the regularity $s$ of $f_0$ (up to logarithmic factors). mportantly geometric (resp. Poisson) prior distributions on $J$ satisfy \ref{prior:J} with $L_j = 1$ (resp. $L_j = \ln j$). While more general choices of priors are considered in \cite{rosaNonparametricRegressionRandom2024a} (in particular non deterministic priors for $h$ given $J$), we abstain from such refinements in the present paper even though our proof technique could be adapted in a similar way.
	
	Although the situation is very similar in spirit to the one considered in Section \ref{subsection:sieve_priors}, a crucial difference is that the covariates $\b{X_1,\ldots,X_n}$ (corresponding to points where an observation $Y_i$ is available) are not sampled i.i.d from $\b{X_1,\ldots,X_N}$, and we must therefore find an alternative proof technique.
	
	We recall the result obtained in \cite{rosaNonparametricRegressionRandom2024a}, which provides sufficient conditions to derive $\norm{\cdot}_n-$posterior contraction rates in this setting (although it is formulated there for more general priors).
	\begin{theorem}{Theorems 3.1 \& 3.4 in \cite{rosaNonparametricRegressionRandom2024a}}\label{theorem:rggs_d_n}\\
		If $f_0 : \MM \to \RR$ is $s-$H\"older for some $s>0$,
		\[
		\begin{cases}
			\ln^\kappa N \leq J \ll \frac{N}{\ln^{1+\tau} n}, \quad \tau > 3d/2, \quad \kappa > d \text{ in the fixed $J$ case} \\
			\tau > d/2 \text{ in the random $J$ case}
		\end{cases}
		\]
		and there exists $B>0$ such that $N \leq n^B$, then for $M>0$ large enough
		\[
		\EE_0 \Pi \c{\norm{f-f_0}_n > M\varepsilon_n | \Xbb^n} \to 0,
		\]
		where $\norm{f-f_0}_n^2 := \sum_{i=1}^n \p{f(x_i) - f_0(x_i)}^2$ and
		\[
		\varepsilon_n = \begin{cases}
			\p{\sqrt{\frac{J}{n}} + J^{-s/d} + \ind{s>1} \frac{1}{\sqrt{NJ^{1/d}}}}\ln^\gamma n \text{ in the fixed $J$ case}, \\
			n^{-\frac{s}{2s+d}}\ln^\gamma n \text{ in the randomised $J$ case},
		\end{cases}
		\]
		for some $\gamma>0$ depending on $s,d,\kappa$ and the prior hyperparameter $\tau$.
	\end{theorem}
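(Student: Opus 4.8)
Since this is Theorems~3.1 and~3.4 of \cite{rosaNonparametricRegressionRandom2024a}, I only outline the strategy I would follow. The natural route is the Ghosal--van der Vaart testing machinery applied \emph{conditionally} on the random point cloud $\XX_N$ (hence on the graph, on its Laplacian $\LL$, and on the bandwidth $h_J$): conditionally on $\XX_N$ the labelled design $X_1,\dots,X_n$ is deterministic, the likelihood is Gaussian, and $\norm{\cdot}_n$ is a fixed empirical $L^2$ semimetric, for which exponentially powerful tests are built directly from entropy bounds (cf.\ \cite{ghosalFundamentalsNonparametricBayesian2017}, Appendix~D). So, with probabilities taken over both the Gaussian errors and the random cloud, it suffices to verify three ingredients: (i) a Kullback--Leibler / prior-mass bound $\Pi\p{f:\norm{f-f_0}_n\le\varepsilon_n}\gtrsim e^{-cn\varepsilon_n^2}$; (ii) a sieve $\FF_n$ with $\Pi\p{\FF_n^{c}}$ super-exponentially small; (iii) a bound on the $\norm{\cdot}_n$-metric entropy of $\FF_n$ at scale $\varepsilon_n$ by $O\p{n\varepsilon_n^2}$.

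First I would take the sieve $\FF_n=\b{\sum_{j=1}^{J}Z_ju_j:\max_j\abs{Z_j}\le R_n,\ J\le\bar{J}_n}$, a single $J$ in the fixed-$J$ case. On each value of $J$ this is a $J$-dimensional Euclidean ball in coefficient space, so its $\norm{\cdot}_n$-covering number at scale $\varepsilon_n$ is at most $\p{3R_n/\varepsilon_n}^{J}$, while Assumption~\ref{assumption:3} on the tails of $\Psi$ (and, under random truncation, \ref{prior:J} on $\pi_J$) makes $\Pi\p{\FF_n^{c}}$ as small as desired; hence (ii) and (iii) hold as soon as $J\ln n\lesssim n\varepsilon_n^2$, i.e.\ for $J$ in the stated range. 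Next --- and this is the substantive step --- I would prove the approximation estimate underlying (i): for $h=h_J=J^{-1/d}/\ln^{\tau/d}n$ there is, with high probability over $\XX_N$, a function $g\in V_J:=\Span\b{u_j:j\le J}$ whose restriction to $\XX_N$ satisfies $\norm{f_0-g}_{\infty}\lesssim\p{J^{-s/d}+\ind{s>1}(NJ^{1/d})^{-1/2}}\ln^\gamma n$ with coefficients not too large, which gives both the KL ball in (i) and the admissibility of $g$ in $\FF_n$. Finally I would feed these into the general posterior contraction theorem to obtain $\norm{\cdot}_n$-contraction at $\varepsilon_n$; in the random-$J$ case the prior weights $\pi_J$ automatically balance the bias--variance trade-off $J^{-s/d}+\sqrt{J/n}$ and deliver the adaptive rate $n^{-s/(2s+d)}$ up to a $\ln^\gamma n$ factor, whereas for deterministic $J$ the rate is exactly the displayed sum.

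The main obstacle is precisely the approximation estimate behind (i): one must control how well the \emph{data-dependent} graph-Laplacian eigenbasis $(u_j)_{j\le J}$ reproduces an $s$-H\"older function on the submanifold $\MM$, which rests on spectral (Dirichlet-form and heat-kernel) convergence of the random geometric graph Laplacian $\LL$ to the weighted Laplace--Beltrami operator on $\MM$, combined with classical approximation theory for its eigenfunctions; this has to be made uniform over the relevant range of $J$, hence of $h_J$, and hold with the required probability over the random cloud, and the constraints $\tau>3d/2$ (resp.\ $\tau>d/2$) and $\kappa>d$ are exactly what the bandwidth and pointwise-concentration bounds demand. The regime $s>1$ is additionally delicate because there the graph-discretisation bias $\asymp(NJ^{1/d})^{-1/2}$ enters the picture and must be tracked separately. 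A secondary point, already flagged in the text, is that $X_1,\dots,X_n$ are the \emph{labelled} vertices of $\XX_N$ rather than an i.i.d.\ resample; this is handled by conditioning on $\XX_N$ throughout and integrating at the end, with the assumption $N\le n^B$ ensuring $\ln N\asymp\ln n$ so that union bounds over $\XX_N$-dependent events only cost logarithmic factors.
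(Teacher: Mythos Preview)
The paper does not contain a proof of this statement: Theorem~\ref{theorem:rggs_d_n} is explicitly recalled from \cite{rosaNonparametricRegressionRandom2024a} (Theorems~3.1 and~3.4 there) and used as a black box in the proof of Theorem~\ref{theorem:rggs_d_N}. You correctly identify this at the outset, and the outline you give --- conditional application of the Ghosal--van der Vaart testing framework, with the three standard ingredients (prior mass, sieve with small remaining mass, entropy bound), the key substantive step being the approximation of an $s$-H\"older function on $\MM$ by the data-dependent graph-Laplacian eigenbasis via spectral convergence results --- is the expected route and matches what the cited reference does. Your discussion of the main difficulties (uniformity over the range of $J$ and $h_J$, the discretisation bias term when $s>1$, and the need to handle the non-i.i.d.\ nature of the labelled subset by conditioning on $\XX_N$) is accurate.
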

	
	Furthermore, under the additional condition $s > d/2$ and other requirements on the prior, a posterior contraction result with respect to the $\norm{\cdot}_N$ norm was obtained in \cite{rosaNonparametricRegressionRandom2024a} using the scalar Bernstein concentration inequality, following the approach of \cite{vaartInformationRatesNonparametric2011}. In analogy with the results obtained in Section \ref{subsection:sieve_priors}, the next Theorem shows that the condition $s > d/2$ is actually not needed in order to get a $\norm{\cdot}_N$ contraction result.
	
	\begin{theorem}\label{theorem:rggs_d_N}
		If $f_0 : \MM \to \RR$ is $s-$H\"older for some $s>0$,
		\[
		\begin{cases}
			\ln^\kappa N \leq J \lesssim \frac{n}{\ln^{1+\tau} n}, \quad \tau > 3d/2, \quad \kappa > d \text{ in the fixed $J$ case}, \\
			\tau > d/2 \text{ in the random $J$ case},
		\end{cases}
		\]
		and there exists $B>0$ such that $N \leq n^B$, then for $M>0$ large enough
		\[
		\EE_0 \Pi \c{\norm{f-f_0}_N > M \varepsilon_n | \Xbb^n} \to 0
		\]
		for $M>0$ large enough, where $\Xbb^n = \b{\p{X_i,Y_i}_{i=1}^n, \p{X_i}_{i > n}}, \quad \varepsilon_n = n^{-\frac{s}{2s+d}} \ln^\gamma n$ and $\gamma>0$ depends on $s,d$ and the prior hyperparameters.
	\end{theorem}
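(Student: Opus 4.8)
The plan is to reproduce the strategy of Theorems~\ref{theorem:2} and~\ref{theorem:3}, with two modifications: the pair of (semi)norms is $\p{\norm{\cdot}_n,\norm{\cdot}_N}$ instead of $\p{\norm{\cdot}_n,\norm{\cdot}_{L^2\p{\mu_0}}}$, and the basis $\p{u_j}$ is itself random (it depends on $\XX_N$ and the bandwidth $h_J$), which I handle by conditioning on $\XX_N$. First I would invoke Theorem~\ref{theorem:rggs_d_n}: since our hypothesis $J\lesssim n/\ln^{1+\tau}n$ is stronger than the one required there (recall $n\leq N$), that theorem applies and yields that $\varepsilon_n=n^{-\frac{s}{2s+d}}\ln^\gamma n$ is a posterior contraction rate with respect to $\norm{\cdot}_n$. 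It remains to upgrade this to $\norm{\cdot}_N$, which requires a comparison lemma and a control of approximation errors measured over $\XX_N$.

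For the comparison lemma I would condition on $\XX_N$, so that $\p{u_j}_{j=1}^J$ and $\p{\lambda_j}$ become deterministic. By exchangeability of $\p{X_1,\ldots,X_N}$, the labeled design $\b{X_1,\ldots,X_n}$ is then a uniformly random $n$-subset of $\XX_N$ drawn \emph{without replacement}, and its empirical inner-product matrix $\Sigma_n^{\mathrm{lab}}:=\frac1n\sum_{i=1}^n u\p{X_i}u\p{X_i}^\top$ (with $u(x)=\p{u_1(x),\ldots,u_J(x)}^\top$) has conditional mean $\bar\Sigma:=\frac1N\sum_{y\in\XX_N}u(y)u(y)^\top$. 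Since the graph degrees — hence the degree measure $\nu$ with respect to which the $u_j$ are orthonormal — are uniformly comparable to $1/N$ (a standard property of random geometric graphs on i.i.d.\ samples of a density bounded above and below, used throughout \cite{rosaNonparametricRegressionRandom2024a}), $\bar\Sigma$ has all its eigenvalues in a fixed interval, and likewise $\norm{g}_N^2\asymp\norm{g}_{L^2\p{\nu}}^2=\norm{\alpha}_{\RR^J}^2$ for $g=\sum_j\alpha_ju_j\in V_J$ with coefficients $\alpha$. A matrix Bernstein (or Chernoff) inequality for sampling without replacement, applied exactly as in the proof of Theorem~\ref{theorem:1}, then shows that on an event $E_N$ with $\Pro_0\p{E_N}\to1$ one has $\lambda_{\min}\p{\Sigma_n^{\mathrm{lab}}}\gtrsim1$, hence $\norm{g}_N\lesssim\norm{g}_n$ for all $g\in V_J$. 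This step needs the input $C_J^2:=\sup_{y\in\XX_N}\frac1J\sum_{j=1}^Ju_j(y)^2$ together with $n\gtrsim C_J^2J\ln J$; I would establish $\sup_{y\in\XX_N}\sum_{j=1}^Ju_j(y)^2\lesssim J\,\mathrm{polylog}(n)$ — the graph-Laplacian analogue of the on-diagonal heat-kernel bound underlying Lemma~\ref{lemma:generic_matern_bound_basis} and item 3 of the list following~\ref{examples:C_J_random_series} — so that the assumed range $J\lesssim n/\ln^{1+\tau}n$ indeed suffices.

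For the approximation errors, set $\Ecal_J\p{\phi}_\infty:=\inf_{h\in V_J}\sup_{y\in\XX_N}\abs{\phi(y)-h(y)}$. Since $f_0$ is $s$-Hölder on $\MM$, the approximation estimates for the graph eigenbasis that already produce the bias term $J^{-s/d}$ in Theorem~\ref{theorem:rggs_d_n} give $\Ecal_{J_n}\p{f_0}_\infty\lesssim J_n^{-s/d}\,\mathrm{polylog}(n)\lesssim\varepsilon_n$. For $f$ under the posterior: in the deterministic-$J$ case the prior is supported on $V_{J_n}$, so $\Ecal_{J_n}\p{f}_\infty=0$; in the random-$J$ case I would use the remaining-mass theorem (\cite{castilloBayesianNonparametricStatistics2024}, Lemma~1.2) as in the proof of Theorem~\ref{theorem:3}, the exponentially decaying prior~\ref{prior:J} on $J$ forcing posterior mass $1-o_{\Pro_0}(1)$ onto $\b{J\leq c_2J_n}\subset\b{\Ecal_{J_n}\p{f}_\infty=0}$ for a suitable $J_n$ meeting the constraint of the previous step, so that $\EE_0\Pi\c{\Ecal_{J_n}\p{f}_\infty>M\varepsilon_n\mid\Xbb^n}\to0$. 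One then concludes exactly as in Theorem~\ref{theorem:2}: on $E_N$, if $\Ecal_{J_n}\p{f}_\infty\lesssim\varepsilon_n$, $\Ecal_{J_n}\p{f_0}_\infty\lesssim\varepsilon_n$ and $\norm{f-f_0}_n\leq\varepsilon_n$, pick $h\in V_{J_n}$ with $\sup_{\XX_N}\abs{(f-f_0)-h}\lesssim\varepsilon_n$; then $\norm{h}_n\lesssim\varepsilon_n$, hence $\norm{h}_N\lesssim\norm{h}_n\lesssim\varepsilon_n$ by the comparison lemma, and $\norm{f-f_0}_N\leq\norm{h}_N+\norm{(f-f_0)-h}_N\lesssim\varepsilon_n$, which together with the first paragraph gives the claim.

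\emph{The main obstacle} is the comparison lemma: establishing the uniform bound $\sup_{y\in\XX_N}\sum_{j=1}^Ju_j(y)^2\lesssim J\,\mathrm{polylog}(n)$ for low-frequency graph-Laplacian eigenfunctions — which does not follow from abstract spectral theory and must be drawn from the random-geometric-graph and manifold structure of \cite{rosaNonparametricRegressionRandom2024a} — and setting up a matrix concentration inequality valid for the without-replacement labeled design. A secondary but necessary check is that the resulting constraint $J\lesssim n/\ln^{1+\tau}n$ (stronger than in Theorem~\ref{theorem:rggs_d_n}, where $J\ll N/\ln^{1+\tau}n$ was allowed) is still compatible with a choice of $J_n$ realizing the rate $\varepsilon_n$ in both the fixed- and random-$J$ regimes, and that all the $\mathrm{polylog}(n)$ losses incurred along the way can be absorbed into the exponent $\gamma$.
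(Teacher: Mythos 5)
Your proposal follows the paper's proof essentially line by line: invoke Theorem~\ref{theorem:rggs_d_n} for the $\norm{\cdot}_n$-rate; exploit exchangeability of $\p{X_1,\ldots,X_N}$ to view the labeled design as an $n$-subset of $\XX_N$ drawn uniformly without replacement; apply a matrix Bernstein inequality for sampling without replacement (the paper uses the bound of Gross--Nesme, Lemma~\ref{lemma:concentration_without_replacement}); prove the on-diagonal bound $\sup_{y\in\XX_N}\sum_{j\le J}u_j(y)^2\lesssim J\ln^{3d/2}n$ via the heat-kernel and eigenvalue estimates of Lemma B.12 in \cite{rosaNonparametricRegressionRandom2024a}; handle the random-$J$ case by a remaining-mass argument to force $J\le J_n$; and conclude by a triangle inequality as in Theorem~\ref{theorem:2}. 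The only item you gloss over is the uniformity in the bandwidth: since both $h$ and $J$ are part of the prior and the eigenbasis $(u_j)$ depends on $h$, the comparison event must hold simultaneously for all $h\in\HH_n=\b{h_J:1\le J\le J_n}$, which the paper obtains by a union bound over $\HH_n$ (contributing an extra factor $J_n$ that the exponential tails easily absorb) — this is a detail rather than a gap.
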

	\begin{proof}
		The proof is at a high level similar to the ones of Theorems \ref{theorem:3} \& \ref{theorem:L2_norm_castillo}. Indeed, results in \cite{rosaNonparametricRegressionRandom2024a} show a contraction of order $\varepsilon_n$ with respect to $\norm{\cdot}_n$ as well as the analog of Assumption \ref{assumption:4} in this context, and a similar remaining mass argument shows that $\EE_0 \Pi \c{J \leq J_n |\Xbb^n} \to 1$ for some $J_n \ll n^{1+3d/2}/\ln n$ (this is a trivial fact in the case of a deterministic prior on $J$). To conclude in the spirit of the proof of Theorems \ref{theorem:2} \& \ref{theorem:3}, it suffices to show that we can apply a concentration inequality similar as the ones obtained in Theorem \ref{theorem:1} in order to relate $\norm{\cdot}_n$ and $\norm{\cdot}_N$. In order to do so we first show using the results in \cite{rosaNonparametricRegressionRandom2024a} that the basis $\p{u_j}_{j=1}^N$ satisfies $C_J = O\p{\ln^{3d/2} n}$ in this context, and, by a symmetrisation argument, that the covariates $\b{X_1,\ldots,X_n}$ can be considered as sampled uniformly without replacement from $\b{x_1,\ldots,x_N}$, allowing us to use the matrix Bernstein inequality in \cite{grossNoteSamplingReplacing2010} instead of the one in \cite{Eldar_Kutyniok_2012}. We defer the full proof to the Appendix.
	\end{proof}
	
	\section{Discussion}\label{section:discussion}
	
	In this work, we have shown that proving optimal $L^2$ posterior contraction rates in the nonparametric regression model with random design was possible through relatively simple arguments and without unnecessary boundedness or smothness requirements on $f_0$. For random series priors applying the matrix Bernstein concentration inequality does indeed lead to optimal $L^2$ posterior contraction rates provided that the priors sufficiently downweights the tail of the basis coefficients, and we have demonstrated that this is typically the case for a large class of priors encountered in practice, with concrete examples including sieve priors or hierarchical Gaussian processes on compact Dirichlet spaces. For Gaussian processes with Sobolev type RKHS we prove our results by taking advantage of the explicit representation of the posterior (which is available thanks to conjugacy), and by using available results on the convergence properties of kernel ridge regression estimators. As byproducts of the analysis we obtain new (to the best of the author's knownledge) upper bounds on the supremum norm of Mercer eigenfunctions of the corresponding kernels which can be of independent interest.
	
	There are several ways in which the work presented here could be extended. First, the results of Sections \ref{sec:gaussian_processes} \& \ref{subsection:hierarchical_GPs_dirichlet_spaces} could be adapted to square exponential Gaussian process priors on domains of $\RR^d$. This would require control of the associated Mercer eigenvalues and eigenfunctions as well as an extension of the approach in \cite{fischerSobolevNormLearning2020a} to this setting. Moreover, the results on kernel ridge regression estimators obtained in \cite{fischerSobolevNormLearning2020a} are valid not only under the $L^2\p{\mu_0}$ metric, but also under Sobolev type ones, and it should be possible to extend these to Gaussian processes with Sobolev type RKHS by a refinement of the proof of Theorem \ref{thm:general_thm_GPs}. Finally, Theorem \ref{theorem:2} crucially relies on the Assumption $\EE_0 \Pi \c{\Ecal_{J_n}(f)_\infty \geq M_2 \varepsilon_n | \Xbb^n} \xrightarrow[n \to \infty]{} 0$ which although as we saw is satisfied in various cases of interest is not easily seen to hold for some priors (e.g. : the ones considered in Section \ref{sec:gaussian_processes}, as discussed at the end of Section \ref{subsec:general_results_random_series}), and finding ways to relax it would be particularly valuable.
	
	\section{Acknowledgments}
	The author is grateful to Judith Rousseau for helpful discussions. This work was supported by the Engineering and Physical Sciences Research Council [Grant Ref: EP/Y028732/1].
	
	\printbibliography

@book{adamsSobolevSpaces2003,
  title = {Sobolev {{Spaces}}},
  author = {Adams, Robert and Fournier, John},
  year = 2003,
  series = {Pure and {{Applied Mathematics}}},
  volume = {140},
  publisher = {Elsevier},
  urldate = {2025-12-17},
  copyright = {https://www.elsevier.com/tdm/userlicense/1.0/},
  isbn = {978-0-12-044143-3},
  langid = {english}
}

@article{arbelBayesianOptimalAdaptive2013,
  title = {Bayesian {{Optimal Adaptive Estimation Using}} a {{Sieve Prior}}},
  author = {Arbel, Julyan and Gayraud, Ghislaine and Rousseau, Judith},
  year = 2013,
  journal = {Scandinavian Journal of Statistics},
  volume = {40},
  number = {3},
  eprint = {24586686},
  eprinttype = {jstor},
  pages = {549--570},
  publisher = {[Board of the Foundation of the Scandinavian Journal of Statistics, Wiley]},
  issn = {0303-6898},
  urldate = {2025-12-19},
  abstract = {We derive rates of contraction of posterior distributions on non-parametric models resulting from sieve priors. The aim of the study was to provide general conditions to get posterior rates when the parameter space has a general structure, and rate adaptation when the parameter space is, for example, a Sobolev class. The conditions employed, although standard in the literature, are combined in a different way. The results are applied to density, regression, nonlinear autoregression and Gaussian white noise models. In the latter we have also considered a loss function which is different from the usual l2 norm, namely the pointwise loss. In this case it is possible to prove that the adaptive Bayesian approach for the l2 loss is strongly suboptimal and we provide a lower bound on the rate.}
}

@book{brezisFunctionalAnalysisSobolev2011a,
  title = {Functional {{Analysis}}, {{Sobolev Spaces}} and {{Partial Differential Equations}}},
  author = {Brezis, Haim},
  year = 2011,
  series = {Universitext},
  edition = {1},
  publisher = {Springer},
  address = {New York, NY},
  doi = {10.1007/978-0-387-70914-7_5},
  urldate = {2025-12-01},
  isbn = {978-0-387-70914-7},
  langid = {english}
}

@book{carlEntropyCompactnessApproximation1990,
  title = {Entropy, {{Compactness}} and the {{Approximation}} of {{Operators}}},
  author = {Carl, Bernd and Stephani, Irmtraud},
  year = 1990,
  series = {Cambridge {{Tracts}} in {{Mathematics}}},
  publisher = {Cambridge University Press},
  address = {Cambridge},
  doi = {10.1017/CBO9780511897467},
  urldate = {2025-12-17},
  abstract = {Entropy quantities are connected with the 'degree of compactness' of compact or precompact spaces, and so are appropriate tools for investigating linear and compact operators between Banach spaces. The main intention of this Tract is to study the relations between compactness and other analytical properties, e.g. approximability and eigenvalue sequences, of such operators. The authors present many generalized results, some of which have not appeared in the literature before. In the final chapter, the authors demonstrate that, to a certain extent, the geometry of Banach spaces can also be developed on the basis of operator theory. All mathematicians working in functional analysis and operator theory will welcome this work as a reference or for advanced graduate courses.},
  isbn = {978-0-521-09094-0},
  file = {/Users/paulrosa/Zotero/storage/DUDEUPBA/EC36C4928F950D134E54F68AA71DAF00.html}
}

@book{castilloBayesianNonparametricStatistics2024,
  title = {Bayesian {{Nonparametric Statistics}}: {{\'Ecole}} d'{{\'Et\'e}} de {{Probabilit\'es}} de {{Saint-Flour LI}} - 2023},
  shorttitle = {Bayesian {{Nonparametric Statistics}}},
  author = {Castillo, Isma{\"e}l},
  year = 2024,
  series = {Lecture {{Notes}} in {{Mathematics}}},
  volume = {2358},
  publisher = {Springer Nature Switzerland},
  address = {Cham},
  doi = {10.1007/978-3-031-74035-0},
  urldate = {2025-01-03},
  copyright = {https://www.springernature.com/gp/researchers/text-and-data-mining},
  isbn = {978-3-031-74034-3 978-3-031-74035-0},
  langid = {english},
  keywords = {Bayesian Deep Neural Networks,Bayesian Inference,Bernstein-von Mises Theorems,High-Dimensional Models,Nonparametric Models,Posterior Distributions,Uncertainty Quantification,Variational Bayes}
}

@article{castilloBayesianSupremumNorm2014a,
  title = {On {{Bayesian Supremum Norm Contraction Rates}}},
  author = {Castillo, Isma{\"e}l},
  year = 2014,
  journal = {The Annals of Statistics},
  volume = {42},
  number = {5},
  eprint = {43556353},
  eprinttype = {jstor},
  pages = {2058--2091},
  publisher = {Institute of Mathematical Statistics},
  issn = {0090-5364},
  urldate = {2025-12-15},
  abstract = {Building on ideas from Castillo and Nickl [Ann. Statist. 41 (2013) 1999-2028], a method is provided to study nonparametric Bayesian posterior convergence rates when "strong" measures of distances, such as the sup-norm, are considered. In particular, we show that likelihood methods can achieve optimal minimax sup-norm rates in density estimation on the unit interval. The introduced methodology is used to prove that commonly used families of prior distributions on densities, namely log-density priors and dyadic random density histograms, can indeed achieve optimal sup-norm rates of convergence. New results are also derived in the Gaussian white noise model as a further illustration of the presented techniques.}
}

@article{castilloThomasBayesWalk2014,
  title = {Thomas {{Bayes}}' Walk on Manifolds},
  author = {Castillo, Isma{\"e}l and Kerkyacharian, G{\'e}rard and Picard, Dominique},
  year = 2014,
  month = apr,
  journal = {Probability Theory and Related Fields},
  volume = {158},
  number = {3},
  pages = {665--710},
  issn = {1432-2064},
  doi = {10.1007/s00440-013-0493-0},
  urldate = {2024-03-28},
  abstract = {Convergence of the Bayes posterior measure is considered in canonical statistical settings where observations sit on a geometrical object such as a compact manifold, or more generally on a compact metric space verifying some conditions. A natural geometric prior based on randomly rescaled solutions of the heat equation is considered. Upper and lower bound posterior contraction rates are derived.},
  langid = {english},
  keywords = {62G05,62G20,Bayesian nonparametrics,Gaussian process priors,Heat kernel}
}

@article{coulhonHeatKernelGenerated2012,
  title = {Heat Kernel Generated Frames in the Setting of {{Dirichlet}} Spaces},
  author = {Coulhon, Thierry and Kerkyacharian, Gerard and Petrushev, Pencho},
  year = 2012,
  journal = {Journal of Fourier Analysis and Applications},
  volume = {18},
  eprint = {1206.0463},
  primaryclass = {math},
  pages = {995--1066},
  doi = {10.48550/arXiv.1206.0463},
  urldate = {2024-04-16},
  abstract = {Wavelet bases and frames consisting of band limited functions of nearly exponential localization on Rd are a powerful tool in harmonic analysis by making various spaces of functions and distributions more accessible for study and utilization, and providing sparse representation of natural function spaces (e.g. Besov spaces) on Rd. Such frames are also available on the sphere and in more general homogeneous spaces, on the interval and ball. The purpose of this article is to develop band limited well-localized frames in the general setting of Dirichlet spaces with doubling measure and a local scale-invariant Poincar\textbackslash 'e inequality which lead to heat kernels with small time Gaussian bounds and H\textbackslash "older continuity. As an application of this construction, band limited frames are developed in the context of Lie groups or homogeneous spaces with polynomial volume growth, complete Riemannian manifolds with Ricci curvature bounded from below and satisfying the volume doubling property, and other settings. The new frames are used for decomposition of Besov spaces in this general setting.},
  archiveprefix = {arXiv},
  keywords = {58J35 42C15 43A85 46E35,Mathematics - Functional Analysis},
  file = {/Users/paulrosa/Zotero/storage/KBVMC5EL/1206.html}
}

@article{devitoReproducingKernelHilbert2021a,
  title = {Reproducing Kernel {{Hilbert}} Spaces on Manifolds: {{Sobolev}} and Diffusion Spaces},
  shorttitle = {Reproducing Kernel {{Hilbert}} Spaces on Manifolds},
  author = {De Vito, Ernesto and M{\"u}cke, Nicole and Rosasco, Lorenzo},
  year = 2021,
  month = may,
  journal = {Analysis and Applications},
  volume = {19},
  number = {03},
  pages = {363--396},
  publisher = {World Scientific Publishing Co.},
  issn = {0219-5305},
  doi = {10.1142/S0219530520400114},
  urldate = {2025-10-23},
  abstract = {We study reproducing kernel Hilbert spaces (RKHS) on a Riemannian manifold. In particular, we discuss under which condition Sobolev spaces are RKHS and characterize their reproducing kernels. Further, we introduce and discuss a class of smoother RKHS that we call diffusion spaces. We illustrate the general results with a number of detailed examples. While connections between Sobolev spaces, differential operators and RKHS are well known in the Euclidean setting, here we present a self-contained study of analogous connections for Riemannian manifolds. By collecting a number of results in unified a way, we think our study can be useful for researchers interested in the topic.},
  keywords = {heat kernels,Reproducing kernel Hilbert spaces}
}

@article{donnellyEigenfunctionsLaplacianCompact2006,
  title = {Eigenfunctions of the {{Laplacian}} on {{Compact Riemannian Manifolds}}},
  author = {Donnelly, Harold},
  year = 2006,
  journal = {Asian Journal of Mathematics},
  volume = {10},
  number = {1},
  pages = {115--126},
  issn = {10936106, 19450036},
  doi = {10.4310/AJM.2006.v10.n1.a7},
  urldate = {2025-12-19},
  langid = {english},
  file = {/Users/paulrosa/Zotero/storage/KW4XNBCY/Donnelly - 2006 - Eigenfunctions of the Laplacian on Compact Riemannian Manifolds.pdf}
}

@article{dunsonGraphBasedGaussian2022,
  title = {Graph Based {{Gaussian}} Processes on Restricted Domains},
  author = {Dunson, David B. and Wu, Hau-Tieng and Wu, Nan},
  year = 2022,
  journal = {Journal of the Royal Statistical Society: Series B (Statistical Methodology)},
  volume = {84},
  number = {2},
  pages = {414--439},
  issn = {1467-9868},
  doi = {10.1111/rssb.12486},
  urldate = {2024-03-29},
  abstract = {In nonparametric regression, it is common for the inputs to fall in a restricted subset of Euclidean space. Typical kernel-based methods that do not take into account the intrinsic geometry of the domain across which observations are collected may produce sub-optimal results. In this article, we focus on solving this problem in the context of Gaussian process (GP) models, proposing a new class of Graph Laplacian based GPs (GL-GPs), which learn a covariance that respects the geometry of the input domain. As the heat kernel is intractable computationally, we approximate the covariance using finitely-many eigenpairs of the Graph Laplacian (GL). The GL is constructed from a kernel which depends only on the Euclidean coordinates of the inputs. Hence, we can benefit from the full knowledge about the kernel to extend the covariance structure to newly arriving samples by a Nystr\"om type extension. We provide substantial theoretical support for the GL-GP methodology, and illustrate performance gains in various applications.},
  copyright = {\copyright{} 2021 Royal Statistical Society},
  langid = {english},
  keywords = {Bayesian,graph Laplacian,heat kernel,manifold,nonparametric regression,restricted domain,semi-supervised},
  file = {/Users/paulrosa/Zotero/storage/NH8J5W4M/rssb.html}
}

@book{edmundsFunctionSpacesEntropy1996,
  title = {Function {{Spaces}}, {{Entropy Numbers}}, {{Differential Operators}}},
  author = {Edmunds, D. E. and Triebel, H.},
  year = 1996,
  series = {Cambridge {{Tracts}} in {{Mathematics}}},
  publisher = {Cambridge University Press},
  address = {Cambridge},
  doi = {10.1017/CBO9780511662201},
  urldate = {2025-12-17},
  abstract = {The distribution of the eigenvalues of differential operators has long fascinated mathematicians. Advances have shed light upon classical problems in this area, and this book presents a fresh approach, largely based upon the results of the authors. The emphasis here is on a topic of central importance in analysis, namely the relationship between i) function spaces on Euclidean n-space and on domains; ii) entropy numbers in quasi-Banach spaces; and iii) the distribution of the eigenvalues of degenerate elliptic (pseudo) differential operators. The treatment is largely self-contained and accessible to non-specialists. Both experts and newcomers alike will welcome this unique exposition.},
  isbn = {978-0-521-56036-8},
  file = {/Users/paulrosa/Zotero/storage/TRM23Z6D/386A287CACFD61C15A8C1021A5A9E6CD.html}
}

@book{Eldar_Kutyniok_2012,
  title = {Compressed Sensing: {{Theory}} and Applications},
  author = {C.Eldar, Yonina and Kutyniok, Gitta},
  year = 2012,
  publisher = {Cambridge University Press},
  address = {Cambridge}
}

@inproceedings{ficheraImplicitManifoldGaussian2023,
  title = {Implicit {{Manifold Gaussian Process Regression}}},
  booktitle = {Thirty-Seventh {{Conference}} on {{Neural Information Processing Systems}}},
  author = {Fichera, Bernardo and Borovitskiy, Viacheslav and Krause, Andreas and Billard, Aude},
  year = 2023,
  eprint = {2310.19390},
  primaryclass = {cs, stat},
  doi = {10.48550/arXiv.2310.19390},
  urldate = {2024-03-29},
  abstract = {Gaussian process regression is widely used because of its ability to provide well-calibrated uncertainty estimates and handle small or sparse datasets. However, it struggles with high-dimensional data. One possible way to scale this technique to higher dimensions is to leverage the implicit low-dimensional manifold upon which the data actually lies, as postulated by the manifold hypothesis. Prior work ordinarily requires the manifold structure to be explicitly provided though, i.e. given by a mesh or be known to be one of the well-known manifolds like the sphere. In contrast, in this paper we propose a Gaussian process regression technique capable of inferring implicit structure directly from data (labeled and unlabeled) in a fully differentiable way. For the resulting model, we discuss its convergence to the Mat\textbackslash 'ern Gaussian process on the assumed manifold. Our technique scales up to hundreds of thousands of data points, and may improve the predictive performance and calibration of the standard Gaussian process regression in high-dimensional settings.},
  archiveprefix = {arXiv},
  keywords = {Computer Science - Machine Learning,Statistics - Machine Learning},
  file = {/Users/paulrosa/Zotero/storage/VXYD9MTN/2310.html}
}

@article{fischerSobolevNormLearning2020a,
  title = {Sobolev {{Norm Learning Rates}} for {{Regularized Least-Squares Algorithms}}},
  author = {Fischer, Simon and Steinwart, Ingo},
  year = 2020,
  journal = {Journal of Machine Learning Research},
  volume = {21},
  number = {205},
  pages = {1--38},
  issn = {1533-7928},
  urldate = {2025-10-08},
  abstract = {Learning rates for least-squares regression are typically expressed in terms of L2 L 2 -norms. In this paper we extend these rates to norms stronger than the L2 L 2 -norm without requiring the regression function to be contained in the hypothesis space. In the special case of Sobolev reproducing kernel Hilbert spaces used as hypotheses spaces, these stronger norms coincide with fractional Sobolev norms between the used Sobolev space and L2 L 2 . As a consequence, not only the target function but also some of its derivatives can be estimated without changing the algorithm. From a technical point of view, we combine the well-known integral operator techniques with an embedding property, which so far has only been used in combination with empirical process arguments. This combination results in new finite sample bounds with respect to the stronger norms. From these finite sample bounds our rates easily follow. Finally, we prove the asymptotic optimality of our results in many cases.}
}

@article{ghosalConvergenceRatesPosterior2000,
  title = {Convergence Rates of Posterior Distributions},
  author = {Ghosal, Subhashis and Ghosh, Jayanta K. and van der Vaart, Aad W.},
  year = 2000,
  month = apr,
  journal = {The Annals of Statistics},
  volume = {28},
  number = {2},
  pages = {500--531},
  publisher = {Institute of Mathematical Statistics},
  issn = {0090-5364, 2168-8966},
  doi = {10.1214/aos/1016218228},
  urldate = {2024-04-16},
  abstract = {We consider the asymptotic behavior of posterior distributions and Bayes estimators for infinite-dimensional statistical models. We give general results on the rate of convergence of the posterior measure. These are applied to several examples, including priors on finite sieves, log-spline models, Dirichlet processes and interval censoring.},
  keywords = {62F25,62G15,62G20,Infinite dimensional model,posterior distribution,rate of convergence,sieves,splines}
}

@article{ghosalConvergenceRatesPosterior2007,
  title = {Convergence Rates of Posterior Distributions for Noniid Observations},
  author = {Ghosal, Subhashis and van der Vaart, Aad},
  year = 2007,
  month = feb,
  journal = {The Annals of Statistics},
  volume = {35},
  number = {1},
  pages = {192--223},
  publisher = {Institute of Mathematical Statistics},
  issn = {0090-5364, 2168-8966},
  doi = {10.1214/009053606000001172},
  urldate = {2024-11-05},
  abstract = {We consider the asymptotic behavior of posterior distributions and Bayes estimators based on observations which are required to be neither independent nor identically distributed. We give general results on the rate of convergence of the posterior measure relative to distances derived from a testing criterion. We then specialize our results to independent, nonidentically distributed observations, Markov processes, stationary Gaussian time series and the white noise model. We apply our general results to several examples of infinite-dimensional statistical models including nonparametric regression with normal errors, binary regression, Poisson regression, an interval censoring model, Whittle estimation of the spectral density of a time series and a nonlinear autoregressive model.},
  keywords = {62G08,62G20,covering numbers,Hellinger distance,independent nonidentically distributed observations,Infinite dimensional model,Markov chains,posterior distribution,rate of convergence,tests}
}

@book{ghosalFundamentalsNonparametricBayesian2017,
  title = {Fundamentals of {{Nonparametric Bayesian Inference}}},
  author = {Ghosal, Subhashis and {van der Vaart}, Aad},
  year = 2017,
  series = {Cambridge {{Series}} in {{Statistical}} and {{Probabilistic Mathematics}}},
  publisher = {Cambridge University Press},
  address = {Cambridge},
  doi = {10.1017/9781139029834},
  urldate = {2024-03-29},
  abstract = {Explosive growth in computing power has made Bayesian methods for infinite-dimensional models - Bayesian nonparametrics - a nearly universal framework for inference, finding practical use in numerous subject areas. Written by leading researchers, this authoritative text draws on theoretical advances of the past twenty years to synthesize all aspects of Bayesian nonparametrics, from prior construction to computation and large sample behavior of posteriors. Because understanding the behavior of posteriors is critical to selecting priors that work, the large sample theory is developed systematically, illustrated by various examples of model and prior combinations. Precise sufficient conditions are given, with complete proofs, that ensure desirable posterior properties and behavior. Each chapter ends with historical notes and numerous exercises to deepen and consolidate the reader's understanding, making the book valuable for both graduate students and researchers in statistics and machine learning, as well as in application areas such as econometrics and biostatistics.},
  isbn = {978-0-521-87826-5},
  file = {/Users/paulrosa/Zotero/storage/SA5PW28R/C96325101025D308C9F31F4470DEA2E8.html}
}

@book{gineMathematicalFoundationsInfiniteDimensional2015,
  title = {Mathematical {{Foundations}} of {{Infinite-Dimensional Statistical Models}}},
  author = {Gin{\'e}, Evarist and Nickl, Richard},
  year = 2015,
  series = {Cambridge {{Series}} in {{Statistical}} and {{Probabilistic Mathematics}}},
  publisher = {Cambridge University Press}
}

@inproceedings{greenMinimaxOptimalRegression2021,
  title = {Minimax {{Optimal Regression}} over {{Sobolev Spaces}} via {{Laplacian Regularization}} on {{Neighborhood Graphs}}},
  booktitle = {Proceedings of {{The}} 24th {{International Conference}} on {{Artificial Intelligence}} and {{Statistics}}},
  author = {Green, Alden and Balakrishnan, Sivaraman and Tibshirani, Ryan},
  year = 2021,
  month = mar,
  pages = {2602--2610},
  publisher = {PMLR},
  issn = {2640-3498},
  urldate = {2024-03-28},
  abstract = {In this paper we study the statistical properties of Laplacian smoothing, a graph-based approach to nonparametric regression. Under standard regularity conditions, we establish upper bounds on the error of the Laplacian smoothing estimator \textbackslash smash\textbraceleft f\textasciicircum f\textasciicircum\textbackslash widehat\textbraceleft f\textbraceright\textbraceright, and a goodness-of-fit test also based on \textbackslash smash\textbraceleft f\textasciicircum f\textasciicircum\textbackslash widehat\textbraceleft f\textbraceright\textbraceright. These upper bounds match the minimax optimal estimation and testing rates of convergence over the first-order Sobolev class H1(X)H1(X)H\textasciicircum 1(\textbackslash mathcal\textbraceleft X\textbraceright ), for X{$\subseteq$}RdX{$\subseteq$}Rd\textbackslash mathcal\textbraceleft X\textbraceright{} \textbackslash subseteq \textbackslash mathbb\textbraceleft R\textbraceright\textasciicircum d and 1{$\leq$}d{$<$}41{$\leq$}d{$<$}41 \textbackslash leq d {$<$} 4; in the estimation problem, for d=4d=4d = 4, they are optimal modulo a lognlog⁡n\textbackslash log n factor. Additionally, we prove that Laplacian smoothing is manifold-adaptive: if X{$\subseteq$}RdX{$\subseteq$}Rd\textbackslash mathcal\textbraceleft X\textbraceright{} \textbackslash subseteq \textbackslash mathbb\textbraceleft R\textbraceright\textasciicircum d is an mmm-dimensional manifold with m},
  langid = {english}
}

@book{grigoryanHeatKernelAnalysis2013,
  title = {Heat {{Kernel}} and {{Analysis}} on {{Manifolds}}},
  author = {Grigor'yan, Alexander},
  year = 2013,
  series = {{{AMS}}/{{IP Studies}} in {{Advanced Mathematics}}},
  publisher = {AMS/International Press of Boston},
  langid = {english}
}

@misc{grossNoteSamplingReplacing2010,
  title = {Note on Sampling without Replacing from a Finite Collection of Matrices},
  author = {Gross, David and Nesme, Vincent},
  year = 2010,
  month = may,
  number = {arXiv:1001.2738},
  eprint = {1001.2738},
  primaryclass = {cs},
  publisher = {arXiv},
  doi = {10.48550/arXiv.1001.2738},
  urldate = {2025-05-01},
  abstract = {This technical note supplies an affirmative answer to a question raised in a recent pre-print [arXiv:0910.1879] in the context of a "matrix recovery" problem. Assume one samples m Hermitian matrices X\_1, ..., X\_m with replacement from a finite collection. The deviation of the sum X\_1+...+X\_m from its expected value in terms of the operator norm can be estimated by an "operator Chernoff-bound" due to Ahlswede and Winter. The question arose whether the bounds obtained this way continue to hold if the matrices are sampled without replacement. We remark that a positive answer is implied by a classical argument by Hoeffding. Some consequences for the matrix recovery problem are sketched.},
  archiveprefix = {arXiv},
  keywords = {Computer Science - Information Theory,Mathematics - Information Theory,Quantum Physics},
  file = {/Users/paulrosa/Zotero/storage/GXMYJUEP/1001.html}
}

@article{huangConvergenceRatesPosterior2004,
  title = {Convergence Rates for Posterior Distributions and Adaptive Estimation},
  author = {Huang, Tzee-Ming},
  year = 2004,
  month = aug,
  journal = {The Annals of Statistics},
  volume = {32},
  number = {4},
  pages = {1556--1593},
  publisher = {Institute of Mathematical Statistics},
  issn = {0090-5364, 2168-8966},
  doi = {10.1214/009053604000000490},
  urldate = {2025-12-15},
  abstract = {The goal of this paper is to provide theorems on convergence rates of posterior distributions that can be applied to obtain good convergence rates in the context of density estimation as well as regression. We show how to choose priors so that the posterior distributions converge at the optimal rate without prior knowledge of the degree of smoothness of the density function or the regression function to be estimated.},
  keywords = {62A15,62G07,62G20,adaptive estimation,Bayesian,convergence rate,Density estimation,Nonparametric regression,sieves}
}

@misc{kanagawaGaussianProcessesReproducing2025,
  title = {Gaussian {{Processes}} and {{Reproducing Kernels}}: {{Connections}} and {{Equivalences}}},
  shorttitle = {Gaussian {{Processes}} and {{Reproducing Kernels}}},
  author = {Kanagawa, Motonobu and Hennig, Philipp and Sejdinovic, Dino and Sriperumbudur, Bharath K.},
  year = 2025,
  month = jun,
  number = {arXiv:2506.17366},
  eprint = {2506.17366},
  primaryclass = {stat},
  publisher = {arXiv},
  doi = {10.48550/arXiv.2506.17366},
  urldate = {2025-10-04},
  abstract = {This monograph studies the relations between two approaches using positive definite kernels: probabilistic methods using Gaussian processes, and non-probabilistic methods using reproducing kernel Hilbert spaces (RKHS). They are widely studied and used in machine learning, statistics, and numerical analysis. Connections and equivalences between them are reviewed for fundamental topics such as regression, interpolation, numerical integration, distributional discrepancies, and statistical dependence, as well as for sample path properties of Gaussian processes. A unifying perspective for these equivalences is established, based on the equivalence between the Gaussian Hilbert space and the RKHS. The monograph serves as a basis to bridge many other methods based on Gaussian processes and reproducing kernels, which are developed in parallel by the two research communities.},
  archiveprefix = {arXiv},
  keywords = {Computer Science - Machine Learning,Computer Science - Numerical Analysis,Mathematics - Numerical Analysis,Mathematics - Probability,Mathematics - Statistics Theory,Statistics - Machine Learning,Statistics - Statistics Theory}
}

@book{lunardiInterpolationTheory2018,
  title = {Interpolation {{Theory}}},
  author = {Lunardi, Alessandra},
  year = 2018,
  publisher = {Scuola Normale Superiore},
  address = {Pisa},
  doi = {10.1007/978-88-7642-638-4},
  urldate = {2025-12-08},
  copyright = {http://www.springer.com/tdm},
  isbn = {978-88-7642-639-1 978-88-7642-638-4},
  langid = {english},
  keywords = {analytic semigroups,Bochner integral,elliptic PDE's,Hilbert spaces,Interpolation,K-method,trace method}
}

@misc{rosaNonparametricRegressionRandom2024a,
  title = {Nonparametric Regression on Random Geometric Graphs Sampled from Submanifolds},
  author = {Rosa, Paul and Rousseau, Judith},
  year = 2024,
  month = nov,
  number = {arXiv:2405.20909},
  eprint = {2405.20909},
  primaryclass = {math},
  publisher = {arXiv},
  doi = {10.48550/arXiv.2405.20909},
  urldate = {2025-07-25},
  abstract = {We consider the nonparametric regression problem when the covariates are located on an unknown smooth compact submanifold of a Euclidean space. Under defining a random geometric graph structure over the covariates we analyze the asymptotic frequentist behaviour of the posterior distribution arising from Bayesian priors designed through random basis expansion in the graph Laplacian eigenbasis. Under Holder smoothness assumption on the regression function and the density of the covariates over the submanifold, we prove that the posterior contraction rates of such methods are minimax optimal (up to logarithmic factors) for any positive smoothness index.},
  archiveprefix = {arXiv},
  keywords = {Mathematics - Statistics Theory,Statistics - Machine Learning,Statistics - Statistics Theory},
  file = {/Users/paulrosa/Zotero/storage/HUABAVGU/2405.html}
}

@inproceedings{rosaPosteriorContractionRates2023,
  title = {Posterior {{Contraction Rates}} for {{Mat\'ern Gaussian Processes}} on {{Riemannian Manifolds}}},
  booktitle = {Thirty-Seventh {{Conference}} on {{Neural Information Processing Systems}}},
  author = {Rosa, Paul and Borovitskiy, Viacheslav and Terenin, Alexander and Rousseau, Judith},
  year = 2023,
  month = nov,
  urldate = {2025-07-25},
  abstract = {Gaussian processes are used in many machine learning applications that rely on uncertainty quantification. Recently, computational tools for working with these models in geometric settings, such as when inputs lie on a Riemannian manifold, have been developed. This raises the question: can these intrinsic models be shown theoretically to lead to better performance, compared to simply embedding all relevant quantities into \$\textbackslash mathbb\textbraceleft R\textbraceright\textasciicircum d\$ and using the restriction of an ordinary Euclidean Gaussian process? To study this, we prove optimal contraction rates for intrinsic Mat\'ern Gaussian processes defined on compact Riemannian manifolds. We also prove analogous rates for extrinsic processes using trace and extension theorems between manifold and ambient Sobolev spaces: somewhat surprisingly, the rates obtained turn out to coincide with those of the intrinsic processes, provided that their smoothness parameters are matched appropriately. We illustrate these rates empirically on a number of examples, which, mirroring prior work, show that intrinsic processes can achieve better performance in practice. Therefore, our work shows that finer-grained analyses are needed to distinguish between different levels of data-efficiency of geometric Gaussian processes, particularly in settings which involve small data set sizes and non-asymptotic behavior.},
  langid = {english}
}

@article{sanz-alonsoUnlabeledDataHelp2022,
  title = {Unlabeled {{Data Help}} in {{Graph-Based Semi-Supervised Learning}}: {{A Bayesian Nonparametrics Perspective}}},
  shorttitle = {Unlabeled {{Data Help}} in {{Graph-Based Semi-Supervised Learning}}},
  author = {{Sanz-Alonso}, Daniel and Yang, Ruiyi},
  year = 2022,
  journal = {Journal of Machine Learning Research},
  volume = {23},
  number = {97},
  pages = {1--28},
  issn = {1533-7928},
  urldate = {2024-03-28},
  abstract = {In this paper we analyze the graph-based approach to semi-supervised learning under a manifold assumption. We adopt a Bayesian perspective and demonstrate that, for a suitable choice of prior constructed with sufficiently many unlabeled data, the posterior contracts around the truth at a rate that is minimax optimal up to a logarithmic factor. Our theory covers both regression and classification.}
}

@article{shenAdaptiveBayesianProcedures2015,
  title = {Adaptive {{Bayesian Procedures Using Random Series Priors}}},
  author = {Shen, Weining and Ghosal, Subhashis},
  year = 2015,
  journal = {Scandinavian Journal of Statistics},
  volume = {42},
  number = {4},
  eprint = {24586883},
  eprinttype = {jstor},
  pages = {1194--1213},
  publisher = {[Board of the Foundation of the Scandinavian Journal of Statistics, Wiley]},
  issn = {0303-6898},
  urldate = {2025-05-02},
  abstract = {We consider a general class of prior distributions for nonparametric Bayesian estimation which uses finite random series with a random number of terms. A prior is constructed through distributions on the number of basis functions and the associated coefficients. We derive a general result on adaptive posterior contraction rates for all smoothness levels of the target function in the true model by constructing an appropriate 'sieve' and applying the general theory of posterior contraction rates. We apply this general result on several statistical problems such as density estimation, various nonparametric regressions, classification, spectral density estimation and functional regression. The prior can be viewed as an alternative to the commonly used Gaussian process prior, but properties of the posterior distribution can be analysed by relatively simpler techniques. An interesting approximation property of B-spline basis expansion established in this paper allows a canonical choice of prior on coefficients in a random series and allows a simple computational approach without using Markov chain Monte Carlo methods. A simulation study is conducted to show that the accuracy of the Bayesian estimators based on the random series prior and the Gaussian process prior are comparable. We apply the method on Tecator data using functional regression models.}
}

@article{steinwartMercersTheoremGeneral2012,
  title = {Mercer's {{Theorem}} on {{General Domains}}: {{On}} the {{Interaction}} between {{Measures}}, {{Kernels}}, and {{RKHSs}}},
  shorttitle = {Mercer's {{Theorem}} on {{General Domains}}},
  author = {Steinwart, Ingo and Scovel, Clint},
  year = 2012,
  month = jun,
  journal = {Constructive Approximation},
  volume = {35},
  number = {3},
  pages = {363--417},
  issn = {1432-0940},
  doi = {10.1007/s00365-012-9153-3},
  urldate = {2025-10-23},
  abstract = {Given a compact metric space X and a strictly positive Borel measure {$\nu$} on X, Mercer's classical theorem states that the spectral decomposition of a positive self-adjoint integral operator Tk:L2({$\nu$}){$\rightarrow$}L2({$\nu$}) of a continuous k yields a series representation of k in terms of the eigenvalues and -functions of~Tk. An immediate consequence of this representation is that k is a (reproducing) kernel and that its reproducing kernel Hilbert space can also be described by these eigenvalues and -functions. It is well known that Mercer's theorem has found important applications in various branches of mathematics, including probability theory and statistics. In particular, for some applications in the latter areas, however, it would be highly convenient to have a form of Mercer's theorem for more general spaces X and kernels k. Unfortunately, all extensions of Mercer's theorem in this direction either stick too closely to the original topological structure of X and k, or replace the absolute and uniform convergence by weaker notions of convergence that are not strong enough for many statistical applications. In this work, we fill this gap by establishing several Mercer type series representations for k that, on the one hand, make only very mild assumptions on X and k, and, on the other hand, provide convergence results that are strong enough for interesting applications in, e.g., statistical learning theory. To illustrate the latter, we first use these series representations to describe ranges of fractional powers of Tkin terms of interpolation spaces and investigate under which conditions these interpolation spaces are contained in L{$\infty$}({$\nu$}). For these two results, we then discuss applications related to the analysis of so-called least squares support vector machines, which are a state-of-the-art learning algorithm. Besides these results, we further use the obtained Mercer representations to show that every self-adjoint nuclear operator L2({$\nu$}){$\rightarrow$}L2({$\nu$}) is an integral operator whose representing function k is the difference of two (reproducing) kernels.},
  langid = {english}
}

@misc{tangAdaptiveBayesianRegression2025,
  title = {Adaptive {{Bayesian Regression}} on {{Data}} with {{Low Intrinsic Dimensionality}}},
  author = {Tang, Tao and Wu, Nan and Cheng, Xiuyuan and Dunson, David},
  year = 2025,
  month = jun,
  number = {arXiv:2407.09286},
  eprint = {2407.09286},
  primaryclass = {math},
  publisher = {arXiv},
  doi = {10.48550/arXiv.2407.09286},
  urldate = {2025-12-15},
  abstract = {We study how the posterior contraction rate under a Gaussian process (GP) prior depends on the intrinsic dimension of the predictors and the smoothness of the regression function. An open question is whether a generic GP prior that does not incorporate knowledge of the intrinsic lower-dimensional structure of the predictors can attain an adaptive rate for a broad class of such structures. We show that this is indeed the case, establishing conditions under which the posterior contraction rates become adaptive to the intrinsic dimension in terms of the covering number of the data domain (the Minkowski dimension) and prove the nonparametric posterior contraction rate, up to a logarithmic factor. When the domain is a compact manifold, we prove the RKHS approximation to intrinsically defined H\"older functions on the manifold of any order of smoothness by a novel analysis, leading to the optimal adaptive posterior contraction rate. We propose an empirical Bayes prior on the kernel bandwidth using kernel affinity and \$k\$-nearest neighbor statistics, bypassing explicit estimation of the intrinsic dimension. The efficiency of the proposed Bayesian regression approach is demonstrated in various numerical experiments.},
  archiveprefix = {arXiv},
  keywords = {Mathematics - Statistics Theory},
  file = {/Users/paulrosa/Zotero/storage/BGX9LALQ/2407.html}
}

@book{triebelTheoryFunctionSpaces1983,
  title = {Theory of {{Function Spaces}}},
  author = {Triebel, Hans},
  year = 1983,
  publisher = {Springer},
  address = {Basel},
  doi = {10.1007/978-3-0346-0416-1},
  urldate = {2024-12-11},
  copyright = {https://www.springernature.com/gp/researchers/text-and-data-mining},
  isbn = {978-3-0346-0415-4 978-3-0346-0416-1},
  langid = {english},
  keywords = {analytic function,differential equation,differential operator,distribution,equation,function}
}

@book{triebelTheoryFunctionSpaces1992,
  title = {Theory of {{Function Spaces II}}},
  author = {Triebel, Hans},
  year = 1992,
  publisher = {Springer},
  address = {Basel},
  doi = {10.1007/978-3-0346-0419-2},
  urldate = {2024-12-11},
  copyright = {https://www.springernature.com/gp/researchers/text-and-data-mining},
  isbn = {978-3-0346-0418-5 978-3-0346-0419-2},
  langid = {english},
  keywords = {function,function space,Lie,lie group,manifold,operator,Oscillation,present,presentation,pseudodifferential operator,pseudodifferential operators,Smooth function,Sobolev space,subject,theorem}
}

@article{vaartInformationRatesNonparametric2011,
  title = {Information {{Rates}} of {{Nonparametric Gaussian Process Methods}}},
  author = {van der Vaart, Aad and van Zanten, Harry},
  year = 2011,
  journal = {Journal of Machine Learning Research},
  volume = {12},
  number = {60},
  pages = {2095--2119},
  issn = {1533-7928},
  urldate = {2024-04-17},
  langid = {english}
}

@article{vaartRatesContractionPosterior2008,
  title = {Rates of Contraction of Posterior Distributions Based on {{Gaussian}} Process Priors},
  author = {van der Vaart, A. W. and van Zanten, J. H.},
  year = 2008,
  month = jun,
  journal = {The Annals of Statistics},
  volume = {36},
  number = {3},
  pages = {1435--1463},
  publisher = {Institute of Mathematical Statistics},
  issn = {0090-5364, 2168-8966},
  doi = {10.1214/009053607000000613},
  urldate = {2025-01-03},
  abstract = {We derive rates of contraction of posterior distributions on nonparametric or semiparametric models based on Gaussian processes. The rate of contraction is shown to depend on the position of the true parameter relative to the reproducing kernel Hilbert space of the Gaussian process and the small ball probabilities of the Gaussian process. We determine these quantities for a range of examples of Gaussian priors and in several statistical settings. For instance, we consider the rate of contraction of the posterior distribution based on sampling from a smooth density model when the prior models the log density as a (fractionally integrated) Brownian motion. We also consider regression with Gaussian errors and smooth classification under a logistic or probit link function combined with various priors.},
  keywords = {60G15,62G05,Bayesian inference,classification,Nonparametric density estimation,Nonparametric regression,rate of convergence}
}

@book{vershyninHighDimensionalProbabilityIntroduction2018,
  title = {High-{{Dimensional Probability}}: {{An Introduction}} with {{Applications}} in {{Data Science}}},
  shorttitle = {High-{{Dimensional Probability}}},
  author = {Vershynin, Roman},
  year = 2018,
  series = {Cambridge {{Series}} in {{Statistical}} and {{Probabilistic Mathematics}}},
  publisher = {Cambridge University Press},
  address = {Cambridge},
  doi = {10.1017/9781108231596},
  urldate = {2025-05-01},
  abstract = {High-dimensional probability offers insight into the behavior of random vectors, random matrices, random subspaces, and objects used to quantify uncertainty in high dimensions. Drawing on ideas from probability, analysis, and geometry, it lends itself to applications in mathematics, statistics, theoretical computer science, signal processing, optimization, and more. It is the first to integrate theory, key tools, and modern applications of high-dimensional probability. Concentration inequalities form the core, and it covers both classical results such as Hoeffding's and Chernoff's inequalities and modern developments such as the matrix Bernstein's inequality. It then introduces the powerful methods based on stochastic processes, including such tools as Slepian's, Sudakov's, and Dudley's inequalities, as well as generic chaining and bounds based on VC dimension. A broad range of illustrations is embedded throughout, including classical and modern results for covariance estimation, clustering, networks, semidefinite programming, coding, dimension reduction, matrix completion, machine learning, compressed sensing, and sparse regression.},
  isbn = {978-1-108-41519-4},
  file = {/Users/paulrosa/Zotero/storage/QZ7DC62F/797C466DA29743D2C8213493BD2D2102.html}
}

@article{xieRatesContractionRespect2019,
  title = {Rates of Contraction with Respect to \${{L}}\_\textbraceleft 2\textbraceright\$-Distance for {{Bayesian}} Nonparametric Regression},
  author = {Xie, Fangzheng and Jin, Wei and Xu, Yanxun},
  year = 2019,
  month = jan,
  journal = {Electronic Journal of Statistics},
  volume = {13},
  number = {2},
  pages = {3485--3512},
  publisher = {{Institute of Mathematical Statistics and Bernoulli Society}},
  issn = {1935-7524, 1935-7524},
  doi = {10.1214/19-EJS1616},
  urldate = {2025-12-14},
  abstract = {We systematically study the rates of contraction with respect to the integrated \$L\_\textbraceleft 2\textbraceright\$-distance for Bayesian nonparametric regression in a generic framework, and, notably, without assuming the regression function space to be uniformly bounded. The generic framework is very flexible and can be applied to a wide class of nonparametric prior models. Three non-trivial applications of the framework are provided: The finite random series regression of an \$\textbackslash alpha\$-H\"older function, with adaptive rates of contraction up to a logarithmic factor; The un-modified block prior regression of an \$\textbackslash alpha\$-Sobolev function, with adaptive-and-exact rates of contraction; The Gaussian spline regression of an \$\textbackslash alpha\$-H\"older function, with near optimal rates of contraction. These applications serve as generalization or complement of their respective results in the literature. Extensions to the fixed-design regression problem and sparse additive models in high dimensions are discussed as well.},
  keywords = {Bayesian nonparametric regression,block prior,finite random series,Gaussian splines,integrated $L_2$-distance,rate of contraction}
}

@article{yangBayesianManifoldRegression2016,
  title = {Bayesian Manifold Regression},
  author = {Yang, Yun and Dunson, David B.},
  year = 2016,
  month = apr,
  journal = {The Annals of Statistics},
  volume = {44},
  number = {2},
  pages = {876--905},
  publisher = {Institute of Mathematical Statistics},
  issn = {0090-5364, 2168-8966},
  doi = {10.1214/15-AOS1390},
  urldate = {2024-03-28},
  abstract = {There is increasing interest in the problem of nonparametric regression with high-dimensional predictors. When the number of predictors \$D\$ is large, one encounters a daunting problem in attempting to estimate a \$D\$-dimensional surface based on limited data. Fortunately, in many applications, the support of the data is concentrated on a \$d\$-dimensional subspace with \$d\textbackslash ll D\$. Manifold learning attempts to estimate this subspace. Our focus is on developing computationally tractable and theoretically supported Bayesian nonparametric regression methods in this context. When the subspace corresponds to a locally-Euclidean compact Riemannian manifold, we show that a Gaussian process regression approach can be applied that leads to the minimax optimal adaptive rate in estimating the regression function under some conditions. The proposed model bypasses the need to estimate the manifold, and can be implemented using standard algorithms for posterior computation in Gaussian processes. Finite sample performance is illustrated in a data analysis example.},
  keywords = {62-07,62H30,65U05,68T05,asymptotics,contraction rates,dimensionality reduction,Gaussian process,manifold learning,nonparametric Bayes,subspace learning}
}

@misc{yangFrequentistCoverageSupnorm2017,
  title = {Frequentist Coverage and Sup-Norm Convergence Rate in {{Gaussian}} Process Regression},
  author = {Yang, Yun and Bhattacharya, Anirban and Pati, Debdeep},
  year = 2017,
  month = aug,
  number = {arXiv:1708.04753},
  eprint = {1708.04753},
  primaryclass = {math},
  publisher = {arXiv},
  doi = {10.48550/arXiv.1708.04753},
  urldate = {2025-09-27},
  abstract = {Gaussian process (GP) regression is a powerful interpolation technique due to its flexibility in capturing non-linearity. In this paper, we provide a general framework for understanding the frequentist coverage of point-wise and simultaneous Bayesian credible sets in GP regression. As an intermediate result, we develop a Bernstein von-Mises type result under supremum norm in random design GP regression. Identifying both the mean and covariance function of the posterior distribution of the Gaussian process as regularized \$M\$-estimators, we show that the sampling distribution of the posterior mean function and the centered posterior distribution can be respectively approximated by two population level GPs. By developing a comparison inequality between two GPs, we provide exact characterization of frequentist coverage probabilities of Bayesian point-wise credible intervals and simultaneous credible bands of the regression function. Our results show that inference based on GP regression tends to be conservative; when the prior is under-smoothed, the resulting credible intervals and bands have minimax-optimal sizes, with their frequentist coverage converging to a non-degenerate value between their nominal level and one. As a byproduct of our theory, we show that the GP regression also yields minimax-optimal posterior contraction rate relative to the supremum norm, which provides a positive evidence to the long standing problem on optimal supremum norm contraction rate in GP regression.},
  archiveprefix = {arXiv},
  keywords = {Mathematics - Statistics Theory,Statistics - Computation,Statistics - Machine Learning,Statistics - Statistics Theory},
  file = {/Users/paulrosa/Zotero/storage/3Z8SA43F/1708.html}
}

@article{yooSupremumNormPosterior2016,
  title = {Supremum Norm Posterior Contraction and Credible Sets for Nonparametric Multivariate Regression},
  author = {Yoo, William Weimin and Ghosal, Subhashis},
  year = 2016,
  month = jun,
  journal = {The Annals of Statistics},
  volume = {44},
  number = {3},
  pages = {1069--1102},
  publisher = {Institute of Mathematical Statistics},
  issn = {0090-5364, 2168-8966},
  doi = {10.1214/15-AOS1398},
  urldate = {2025-12-14},
  abstract = {In the setting of nonparametric multivariate regression with unknown error variance \$\textbackslash sigma\textasciicircum\textbraceleft 2\textbraceright\$, we study asymptotic properties of a Bayesian method for estimating a regression function \$f\$ and its mixed partial derivatives. We use a random series of tensor product of B-splines with normal basis coefficients as a prior for \$f\$, and \$\textbackslash sigma\$ is either estimated using the empirical Bayes approach or is endowed with a suitable prior in a hierarchical Bayes approach. We establish pointwise, \$L\_\textbraceleft 2\textbraceright\$ and \$L\_\textbraceleft\textbackslash infty\textbraceright\$-posterior contraction rates for \$f\$ and its mixed partial derivatives, and show that they coincide with the minimax rates. Our results cover even the anisotropic situation, where the true regression function may have different smoothness in different directions. Using the convergence bounds, we show that pointwise, \$L\_\textbraceleft 2\textbraceright\$ and \$L\_\textbraceleft\textbackslash infty\textbraceright\$-credible sets for \$f\$ and its mixed partial derivatives have guaranteed frequentist coverage with optimal size. New results on tensor products of B-splines are also obtained in the course.},
  keywords = {62G05,62G08,62G15,62G20,anisotropic smoothness,mixed partial derivatives,nonparametric multivariate regression,sup-norm posterior contraction,Tensor product B-splines}
}
	
	\section{Appendix}
	
	\subsection{Some results on compact Dirichlet spaces}\label{subsection:dirichlet_spaces}
	
	In this section we consider a compact metric space $\XX$ satisfying \ref{assumptions:dirichlet_spaces}. The set of assumptions \ref{assumptions:dirichlet_spaces} implies useful estimates on the spectrum and the eigenfunctions of $L$, as shown in the following proposition.
	
	\begin{proposition}\label{proposition:growth_eigenpairs_dirichlet_spaces}
	Under the set of assumptions \ref{assumptions:dirichlet_spaces} the eigenvalues $\p{\lambda_j}_{j \geq 1}$ of $L$ satisfy $\lambda_j \asymp j^{2/d}$ as $j \to \infty$ and the associated eigenfunctions $\p{u_j}_{j \geq 1}$ satisfy
	\[
	\sup_{J \geq 1} \norm{\frac{1}{J} \sum_{j \leq J} u_j^2}_{\infty} < + \infty.
	\]
	\end{proposition}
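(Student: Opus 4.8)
The plan is to read off both assertions from the heat kernel of $L$, whose spectral representation $p_t(x,y)=\sum_{j\ge1}e^{-t\lambda_j}u_j(x)u_j(y)$ (with uniform convergence over $\XX^2$, Equation $3.51$ in \cite{coulhonHeatKernelGenerated2012}) links the eigenpairs to the analytic estimates in Assumption \ref{assumptions:dirichlet_spaces}. The single nontrivial input I would extract from \cite{coulhonHeatKernelGenerated2012} is the two-sided on-diagonal bound $p_t(x,x)\asymp t^{-d/2}$, uniformly in $x\in\XX$, for $0<t\le1$. The upper bound is immediate: setting $y=x$ in the small time Gaussian upper bound and using the volume regularity $\mu(B(x,\sqrt t))\ge ct^{d/2}$ gives $p_t(x,x)\le C/\mu(B(x,\sqrt t))\le (C/c)\,t^{-d/2}$. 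The matching lower bound is the classical upgrade of the conservativeness (Markov) property by means of the off-diagonal Gaussian decay, which confines the mass of $p_t(x,\cdot)$ to a ball of radius $\asymp\sqrt t$ about $x$; this is exactly where I would invoke \cite{coulhonHeatKernelGenerated2012} rather than reprove it, and I expect it to be the only genuinely non-elementary step.

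Given the diagonal bound, the eigenvalue asymptotic follows by a standard heat-trace (Tauberian) argument. Since $\mu(\XX)<+\infty$, integrating over $\XX$ yields $\sum_{j\ge1}e^{-t\lambda_j}\asymp t^{-d/2}$ for $0<t\le1$. Writing $N(\lambda):=\#\{j:\lambda_j\le\lambda\}$, the upper bound is the one-line estimate $N(\lambda)\,e^{-t\lambda}\le\sum_{j\ge1}e^{-t\lambda_j}$ with the choice $t=1/\lambda$ (valid for $\lambda\ge1$), giving $N(\lambda)\lesssim\lambda^{d/2}$. For the lower bound I would fix $\theta>1$, bound $\sum_{j\ge1}e^{-t\lambda_j}\le N(\theta/t)+\sum_{\lambda_j>\theta/t}e^{-t\lambda_j}$, and estimate the tail via a Stieltjes integration by parts using the already-proven $N(s)\lesssim s^{d/2}$: this makes the tail $\lesssim t^{-d/2}\int_\theta^\infty e^{-u}u^{d/2}\,du$, which is an arbitrarily small multiple of $t^{-d/2}$ once $\theta$ is large, hence $N(\theta/t)\gtrsim t^{-d/2}$, i.e. $N(\lambda)\gtrsim\lambda^{d/2}$. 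Combining the two bounds gives $N(\lambda)\asymp\lambda^{d/2}$, equivalently $\lambda_j\asymp j^{2/d}$.

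For the eigenfunction estimate, note first that each $u_j$ is continuous on the compact space $\XX$ (indeed Hölder, by the Hölder continuity hypothesis on $p_t$), hence bounded; so only large $J$ need attention. For such $J$ we have $\lambda_J\ge1$, and choosing $t=\lambda_J^{-1}\in(0,1]$, the bound $e^{-t\lambda_j}\ge e^{-t\lambda_J}=e^{-1}$ for $j\le J$ gives
\[
\sum_{j\le J}u_j(x)^2\le e\sum_{j\le J}e^{-t\lambda_j}u_j(x)^2\le e\sum_{j\ge1}e^{-t\lambda_j}u_j(x)^2=e\,p_t(x,x)\le\frac{eC}{c}\,\lambda_J^{d/2}.
\]
By the eigenvalue estimate $\lambda_J\lesssim J^{2/d}$ this is $\lesssim J$, uniformly in $x$, which is precisely $\sup_{J\ge1}\norm{\frac{1}{J}\sum_{j\le J}u_j^2}_\infty<+\infty$. (Alternatively, this pointwise estimate is contained in the spectral function bound $\sum_{\lambda_j\le\lambda}u_j(x)^2\asymp\mu(B(x,\lambda^{-1/2}))^{-1}$ established in \cite{coulhonHeatKernelGenerated2012}, from which the eigenvalue asymptotic also follows by integration.)

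The main obstacle is thus the sharp on-diagonal lower bound $p_t(x,x)\gtrsim t^{-d/2}$: the naive Cauchy--Schwarz argument from conservativeness only yields $p_t(x,x)\gtrsim\mu(\XX)^{-1}$, and promoting this to the correct rate genuinely uses the Gaussian off-diagonal decay. Everything else --- the heat-trace identity, the elementary Tauberian split, the calibration $t=\lambda_J^{-1}$, and the reduction of small $J$ to compactness --- is routine.
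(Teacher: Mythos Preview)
Your proposal is correct and follows essentially the same route as the paper: both bound $\sum_{j\le J}u_j(x)^2$ by $e\,p_t(x,x)\lesssim t^{-d/2}$ with $t\asymp\lambda_J^{-1}$ and then translate via $\lambda_J\asymp J^{2/d}$. The only cosmetic difference is that the paper obtains the counting function estimate $N(\Lambda)\asymp\Lambda^{d/2}$ by directly citing Equation~3.50 in \cite{coulhonHeatKernelGenerated2012}, whereas you unpack it into a short heat-trace Tauberian argument that takes the on-diagonal lower bound $p_t(x,x)\gtrsim t^{-d/2}$ as the black-box input from the same reference---either way the nontrivial content is delegated to \cite{coulhonHeatKernelGenerated2012}.
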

	\begin{proof}
	The eigenvalues growth estimate follows directly by Proposition $3.20$ in \cite{coulhonHeatKernelGenerated2012} : indeed, Equation $3.50$ of the same paper together with the volume regularity of the the set of assumptions \ref{assumptions:dirichlet_spaces} imply
	\[
	c\Lambda^{d/2} \leq \# \b{j : \lambda_j \leq \Lambda} \leq C \Lambda^{d/2}
	\]
	for some constants $C,c>0$ and for sufficiently large $j \geq 1$. In particular $j > C \Lambda^{d/2} \iff \Lambda < \p{j/C}^{2/d}$ implies $\lambda_j > \Lambda$, which holds in particular for $\Lambda = \p{j/2C}^{2/d}$, thus giving the lower bound $\lambda_j \geq \p{j/2C}^{2/d}$. Similarly, setting $\Lambda = \p{j/c}^{2/d} \iff j = c \Lambda^{d/2}$ yields $j \leq \# \b{j : \lambda_j \leq \Lambda}$, and therefore $\lambda_j \leq \Lambda = c \Lambda^{d/2}$.
	
	For the bound on the eigenfunctions we use the heat kernel bound contained in the set of assumptions \ref{assumptions:dirichlet_spaces} : in particular on the diagonal we have
	\[
	\sup_{x \in \XX} p_t(x,x) \leq \frac{C}{\mu \p{B(x,\sqrt{t})}}, \quad 0 < t \leq 1,
	\]
	which coupled with the volume regularity assumption implies
	\[
	\sup_{x \in \XX} p_t(x,x) \leq \frac{C}{ct^{d/2}}.
	\]
	But now, for any $\Lambda \geq 1$,
	\begin{align*}
	\sum_{\lambda_j \leq \Lambda} u_j^2(x) & \leq \sum_{j \geq 1} e^{1-\lambda_j/\Lambda} u_j^2(x) \\
	& = e p_{\Lambda^{-1}}(x,x) \\
	& \leq \frac{eC}{c} \Lambda^{d/2},
	\end{align*}
	therefore with $\Lambda = \p{J/c}^{2/d} \iff J = c\Lambda^{d/2}$ we get $\lambda_j \leq \Lambda$ whenever $j \leq J = c\Lambda^{d/2}$ by the estimates on $\lambda_j$ obtained above, and therefore
	\[
	\sum_{j \leq J} u_j^2(x) \leq \sum_{\lambda_j \leq \Lambda} u_j^2(x) \leq \frac{eC}{c} \Lambda^{d/2} = \frac{eC}{c^2}J,
	\]
	which finally yields
	\[
	\sup_{J \geq 1} \norm{\frac{1}{J} \sum_{j \leq J} u_j^2}_{\infty} < + \infty.
	\]
	\end{proof}
	
	Following \cite{coulhonHeatKernelGenerated2012}, the eigendecomposition of $L$ allows us to define the Besov spaces $B_{pq}^s, \quad s > 0, \quad 1 \leq p \leq \infty, \quad 0 < q \leq \infty$ as the (quasi) Banach space (Banach space when $q \geq 1$) in the following way : take any $\varphi_0,\varphi \in \CC^\infty\p{\RR^+,\RR}$ such that
	\begin{itemize}
		\item $\supp \varphi_0 \subset \c{0,2}, \quad \varphi_0^{(\nu)} (0) = 0$ for $\nu \geq 1$ and $\abs{\varphi_0(\lambda)} \geq c > 0$ for $\lambda \in \c{0,2^{3/4}}$,
		\item $\supp \varphi \subset \c{1/2,2}, \quad \abs{\varphi(\lambda)} \geq c > 0$ for $\lambda \in \c{2^{-3/4},2^{3/4}}$,
	\end{itemize}
	define $\varphi_j = \varphi\p{2^{-j} \cdot }$ for $j \geq 1$ and consider the space $B_{pq}^s$ as the subspace of $L^p := L^p\p{\XX,\mu}$ with elements $f$ satisfying
	\[
	\norm{f}_{B_{pq}^s} := \p{\sum_{j \geq 0} \p{ 2^{sj} \norm{\varphi_j\p{\sqrt{L}}f}_{L^p} }^q }^{1/q} < \infty
	\]
	with the usual sup-norm modification when $q = \infty$, where $\varphi_j\p{\sqrt{L}}$ is the integral operator with kernel $\varphi_j\p{\sqrt{L}}(\cdot,\cdot)$ (with respect to $\mu$) given by
	\[
	\varphi_j\p{\sqrt{L}}(x,y) = \sum_{j \geq 1} \varphi_j\p{\sqrt{\lambda_j}} u_j(x) u_j(y), \quad x,y \in \XX.
	\]
	As in \cite{coulhonHeatKernelGenerated2012}, in the case $p = \infty$ the space $L^p = L^\infty$ is instead defined as the Banach space of uniformly continuous and bounded functions equipped with the supremum norm. The definition of $B_{pq}^s$ does not depend on the initial choices of $\varphi_0,\varphi$ up to norm equivalence (we refer to \cite{coulhonHeatKernelGenerated2012} for more details on the construction and properties of functional calculus on $L$). In the cases $\p{p,q} = \p{2,2}$ and $\p{p,q} = \p{\infty,\infty}$ it turns out that useful equivalent characterisations of the resulting Besov spaces are available, as shown in the following Proposition.
	
	\begin{proposition}\label{prop:characterisation_besov_spaces}
		For any $s>0$ we have :
	\begin{enumerate}
		\item $\norm{f}_{B_{22}^s} \asymp \norm{f}_{H^s\p{\XX}}$,
		\item $\norm{f}_{B_{\infty \infty}^s} \asymp \norm{f}_\infty + \sup_{J \geq 1} J^{s/d} \Ecal_J\p{f}_\infty$ where $\Ecal_J\p{f}_\infty := \inf_{g \in V_J} \norm{f-g}_\infty, \quad V_J := \Span \b{u_j : 1\leq j \leq J}$.
	\end{enumerate}
	\end{proposition}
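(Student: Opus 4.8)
The plan is to treat the two equivalences separately: the first is a routine spectral computation, and the second reduces, via the two-sided eigenvalue estimate $\lambda_j\asymp j^{2/d}$ of Proposition~\ref{proposition:growth_eigenpairs_dirichlet_spaces}, to a Jackson--Bernstein characterisation of $B_{\infty\infty}^s$ by approximation with band-limited functions. For the first equivalence, since $\p{u_k}_{k\geq1}$ is an $L^2\p{\mu}$-orthonormal basis the spectral multiplier acts by $\varphi_j\p{\sqrt{L}}f=\sum_k\varphi_j\p{\sqrt{\lambda_k}}\inner{u_k|f}_{L^2\p{\mu}}u_k$, so by Parseval $\norm{\varphi_j\p{\sqrt{L}}f}_{L^2\p{\mu}}^2=\sum_k\abs{\varphi_j\p{\sqrt{\lambda_k}}}^2\abs{\inner{u_k|f}_{L^2\p{\mu}}}^2$. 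Substituting into the defining series of $\norm{f}_{B_{22}^s}^2$ and interchanging the nonnegative sums gives $\norm{f}_{B_{22}^s}^2=\sum_k\abs{\inner{u_k|f}_{L^2\p{\mu}}}^2\,w_k$ with $w_k:=\sum_{j\geq0}2^{2sj}\abs{\varphi_j\p{\sqrt{\lambda_k}}}^2$, so it suffices to show $w_k\asymp\p{1+\lambda_k}^s$ uniformly in $k$, after which the claim is immediate from Definition~\ref{definition:sobolev_spaces_dirichlet_spaces}. The upper bound holds because the support condition on $\varphi$ forces only boundedly many $j$ to contribute, each with $2^j\asymp\sqrt{1+\lambda_k}$, together with $\norm{\varphi_0}_\infty,\norm{\varphi}_\infty<\infty$; the lower bound holds because the sets $\b{2^{j-3/4}\leq\sqrt{\lambda}\leq2^{j+3/4}}$ for $j\geq1$, together with $[0,2^{3/4}]$ for $j=0$, cover $[0,\infty)$, so for each $k$ some single term of $w_k$ is already $\gtrsim\p{1+\lambda_k}^s$.

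For the second equivalence I would use $\norm{f}_{B_{\infty\infty}^s}\asymp\norm{f}_\infty+\sup_{t\geq1}t^s E_t\p{f}_\infty$, where $E_t\p{f}_\infty:=\inf\b{\norm{f-g}_\infty:g\in\Span\b{u_k:\lambda_k\leq t^2}}$; this is a Jackson--Bernstein statement of the kind established in \cite{coulhonHeatKernelGenerated2012}, with a self-contained proof sketched below if a direct reference is unavailable. Granting it, the eigenvalue counting estimate in the proof of Proposition~\ref{proposition:growth_eigenpairs_dirichlet_spaces} gives $\#\b{k:\lambda_k\leq t^2}\asymp t^d$, hence $\Span\b{u_k:\lambda_k\leq t^2}=V_{J\p{t}}$ with $J\p{t}\asymp t^d$; since $t\mapsto E_t\p{f}_\infty$ and $J\mapsto\Ecal_J\p{f}_\infty$ are non-increasing and consecutive dyadic levels of each are comparable, $\sup_{t\geq1}t^s E_t\p{f}_\infty\asymp\sup_{J\geq1}J^{s/d}\Ecal_J\p{f}_\infty$, which is the desired identity.

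The Jackson--Bernstein step itself would be proved with the Littlewood--Paley machinery that \cite{coulhonHeatKernelGenerated2012} provides under Assumption~\ref{assumptions:dirichlet_spaces}: a companion family $\p{\tilde\varphi_j}$ of the same type with $\sum_{j\geq0}\tilde\varphi_j\varphi_j\equiv1$, the reproducing identity $f=\sum_{j\geq0}\tilde\varphi_j\p{\sqrt{L}}\varphi_j\p{\sqrt{L}}f$, and uniform bounds $\sup_j\norm{\varphi_j\p{\sqrt{L}}}_{L^\infty\to L^\infty}<\infty$ and likewise for the $\tilde\varphi_j$. For the bound $\lesssim$: truncating the identity at level $\ell$ produces $g\in V_J$ with $J\asymp2^{\ell d}$ and, by a geometric series (this is where $s>0$ is used), $\norm{f-g}_\infty\lesssim\sum_{j>\ell}\norm{\varphi_j\p{\sqrt{L}}f}_\infty\lesssim2^{-s\ell}\norm{f}_{B_{\infty\infty}^s}$, while the untruncated identity gives $\norm{f}_\infty\lesssim\norm{f}_{B_{\infty\infty}^s}$. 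For the reverse bound: given $j\geq1$, choose $J_j\asymp2^{(j-1)d}$ small enough that every element of $V_{J_j}$ is band-limited strictly below $2^{j-1}$ and hence annihilated by $\varphi_j\p{\sqrt{L}}$, whose symbol is supported in $[2^{j-1},2^{j+1}]$; then for a near-best $L^\infty$-approximant $g_j\in V_{J_j}$ one gets $\norm{\varphi_j\p{\sqrt{L}}f}_\infty=\norm{\varphi_j\p{\sqrt{L}}\p{f-g_j}}_\infty\lesssim\Ecal_{J_j}\p{f}_\infty\lesssim2^{-(j-1)s}\sup_J J^{s/d}\Ecal_J\p{f}_\infty$, and the $j=0$ term is $\lesssim\norm{f}_\infty$.

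The main obstacle is not the bookkeeping --- which is entirely dyadic, plus the monotonicity of best-approximation errors --- but securing the two analytic inputs used above: the uniform $L^\infty\to L^\infty$ boundedness of the spectral projectors $\varphi_j\p{\sqrt{L}}$ together with a reproducing formula with summable operator norms, and the two-sided eigenvalue and counting estimate of Proposition~\ref{proposition:growth_eigenpairs_dirichlet_spaces} that matches band-limitation to the dimension of $V_J$. The former I would import from \cite{coulhonHeatKernelGenerated2012} rather than reprove; the latter is already established.
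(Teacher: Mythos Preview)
Your proposal is correct and follows essentially the same route as the paper. For part~1 the paper simply cites an external reference, whereas you write out the spectral computation showing $w_k\asymp(1+\lambda_k)^s$; for part~2 both you and the paper import the band-limited approximation characterisation $\norm{f}_{B_{\infty\infty}^s}\asymp\norm{f}_\infty+\sup_{\Lambda\geq1}\Lambda^{s/2}\Ecal^\Lambda(f)_\infty$ from \cite{coulhonHeatKernelGenerated2012} (Proposition~6.2 and Section~3.5 there) and then translate via the eigenvalue estimate $\lambda_j\asymp j^{2/d}$. The only place your write-up is looser than the paper's is the translation step: the paper handles it by the inclusion $V_J\subset V^{\lambda_J}$ (giving $\Ecal^{\lambda_J}(f)_\infty\leq\Ecal_J(f)_\infty$) for one direction, and by $V^{cJ^{2/d}/2}\subset V_J$ for large $J$ for the other, absorbing the finitely many small~$J$ into $\norm{f}_\infty$; your ``consecutive dyadic levels are comparable'' is the same idea but would benefit from making these inclusions explicit.
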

	\begin{proof}
	The first statement of the proposition is proved in Proposition $15$ in \cite{rosaPosteriorContractionRates2023} (it was stated there for compact manifolds, but the same proof goes through in the setting of compact Dirichlet spaces satisfying the set of assumptions \ref{assumptions:dirichlet_spaces} as well). For the second statement, with $V^\Lambda = \Span \b{u_j : \lambda_j \leq \Lambda}$ we have, using Proposition $6.2$ and Section $3.5$ of \cite{coulhonHeatKernelGenerated2012},
	\[
	\norm{f}_{B_{\infty \infty}^s} \asymp \norm{f}_\infty + \sup_{\Lambda \geq 1} \Lambda^{s/2} \Ecal^\Lambda\p{f}_\infty,
	\]
	where
	\[
	\Ecal^\Lambda\p{f}_\infty := \inf_{g \in V^\Lambda} \norm{f-g}_\infty.
	\]
	Now for any $J \geq 1$ and $\Lambda \in \c{\lambda_J, \lambda_{J+1}}$ we have, since $\p{\lambda_j}_{j \geq 1}$ is non-decreasing,
	\[
	\Ecal^{\lambda_{J+1}}\p{f}_\infty \leq \Ecal^\Lambda\p{f}_\infty \leq \Ecal^{\lambda_J}\p{f}_\infty,
	\]
	so that, using Proposition \ref{proposition:growth_eigenpairs_dirichlet_spaces},
	\[
	\Lambda^{s/2} \Ecal^\Lambda\p{f}_\infty \leq \Lambda_{J+1}^{s/2} \Ecal^{\lambda_J}\p{f}_\infty \lesssim \p{J+1}^{s/d} \Ecal^{\lambda_J}\p{f}_\infty,
	\]
	But with $V_J := \Span \b{u_j : 1 \leq j \leq J}$ and
	\[
	\Ecal_J\p{f}_\infty := \inf_{g \in V_J} \norm{f-g}_\infty
	\]
	we have $\Ecal^{\lambda_J}\p{f}_\infty \leq \Ecal_J\p{f}_\infty$ because $\lambda_j \leq \lambda_J$ whenever $j \leq J$, therefore
	\[
	\Lambda^{s/2} \Ecal^\Lambda\p{f}_\infty \lesssim J^{s/d} \Ecal_J\p{f}_\infty, \quad J \geq 1,
	\]
	which finally implies
	\[
	\sup_{\Lambda \geq 1} \Lambda^{s/2} \Ecal^\Lambda\p{f}_\infty \lesssim \sup_{J \geq 1} J^{s/d} \Ecal_J\p{f}_\infty.
	\]
	Similarly using Proposition \ref{proposition:growth_eigenpairs_dirichlet_spaces} there exists $c,j_0>0$ such that $\lambda_j \geq cj^{2/d}$ for all $j \geq j_0$ large enough. In particular, $\lambda_j \leq cJ^{2/d}/2$ necessarily implies $j < J$ and therefore $V^{cJ^{2/d}/2} \subset V_J$ for $J \geq j_0$. This implies, perahps for a larger $j_0$,
	\[
	\sup_{\Lambda \geq 1} \Lambda^{s/2} \Ecal^\Lambda\p{f}_\infty \geq \sup_{J \geq j_0} \sup_{cJ^{2/d}/2 \leq \Lambda \leq c\p{J+1}^{2/d}/2} \p{cJ^{2/d}/2}^{s/2} \Ecal^{c\p{J+1}^{2/d}/2}\p{f}_\infty \gtrsim \sup_{J \geq j_0} \p{J+1}^{s/d}\Ecal_{J+1}\p{f}_\infty.
	\]
	In particular
	\[
	\sup_{\Lambda \geq 1} \Lambda^{s/2} \Ecal^\Lambda\p{f}_\infty \gtrsim \sup_{J \geq j_0 + 1} J^{s/d} \Ecal_J\p{f}_\infty.
	\]
	But for any $J \leq j_0$ we also have
	\[
	\Ecal_J(f)_\infty = \inf_{g \in V_J} \norm{f-g}_\infty \leq \norm{f}_\infty \implies \norm{f}_\infty + \sup_{J \geq 1} J^{s/d} \Ecal_J\p{f}_\infty \leq 2\norm{f}_\infty + \sup_{J \geq j_0+1} J^{s/d} \Ecal_J\p{f}_\infty,
	\]
	and therefore
	\[
	\norm{f}_\infty + \sup_{J \geq 1} J^{s/d} \Ecal_J\p{f}_\infty \lesssim \norm{f}_\infty + \sup_{\Lambda \geq 1} \Lambda^{s/2} \Ecal^\Lambda\p{f}_\infty,
	\]
	thus concluding the proof.
	\end{proof}
	
	Using the set of assumptions \ref{assumptions:dirichlet_spaces} we now show two embeddings theorems satisfied by the Sobolev spaces $H^{\alpha+d/2}\p{\XX}, \alpha > 0$.
	
	\begin{proposition}\label{prop:embeddings}
		For any $\alpha > 0$ we have
		\begin{enumerate}
			\item $H^{\alpha+d/2}\p{\XX}$ is continuously embedded in $\CC\p{\XX}$,
			\item $H^{\alpha+d/2}\p{\XX}$ is compactly embedded in $L^2$.
		\end{enumerate}
	\end{proposition}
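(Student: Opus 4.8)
The plan is to read off both embeddings directly from Definition \ref{definition:sobolev_spaces_dirichlet_spaces} and the eigenpair estimates of Proposition \ref{proposition:growth_eigenpairs_dirichlet_spaces}. Write $s := \alpha + d/2$ and, for $f \in H^s\p{\XX}$, expand $f = \sum_{j \geq 1} a_j u_j$ with $a_j := \inner{u_j | f}_{L^2\p{\mu}}$, so that $\sum_{j \geq 1} \p{1+\lambda_j}^s a_j^2 = \norm{f}_{H^s\p{\XX}}^2 < \infty$. For statement $(1)$ I would show that the partial sums $S_N f := \sum_{j \leq N} a_j u_j$ are uniformly Cauchy: by Cauchy--Schwarz, for $M < N$ and every $x \in \XX$,
\[
\abs{S_N f(x) - S_M f(x)} \leq \p{\sum_{M < j \leq N} \p{1+\lambda_j}^s a_j^2}^{1/2} \p{\sum_{M < j \leq N} \p{1+\lambda_j}^{-s} u_j^2(x)}^{1/2},
\]
so everything reduces to the uniform bound $\kappa_s^2 := \sup_{x \in \XX} \sum_{j \geq 1} \p{1+\lambda_j}^{-s} u_j^2(x) < \infty$.

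To get this I would invoke Proposition \ref{proposition:growth_eigenpairs_dirichlet_spaces}: from $\lambda_j \asymp j^{2/d}$ one has $\p{1+\lambda_j}^{-s} \lesssim j^{-(2\alpha/d+1)}$, and the second conclusion there gives $\sum_{j \leq J} u_j^2(x) \lesssim J$ uniformly in $x$ and $J$ (the $u_j$ being continuous, a consequence of the H\"older regularity in \ref{assumptions:dirichlet_spaces}; see also \cite{coulhonHeatKernelGenerated2012}). Splitting the series over dyadic blocks $2^k < j \leq 2^{k+1}$ and using $\sum_{2^k < j \leq 2^{k+1}} u_j^2(x) \leq \sum_{j \leq 2^{k+1}} u_j^2(x) \lesssim 2^k$, each block contributes $\lesssim 2^{-k(2\alpha/d+1)} 2^k = 2^{-2k\alpha/d}$, and summing over $k \geq 0$ converges because $\alpha > 0$; hence $\kappa_s < \infty$. (An alternative is the subordination identity $\p{1+\lambda_j}^{-s} = \Gamma(s)^{-1} \int_0^\infty t^{s-1} e^{-t} e^{-t\lambda_j}\,dt$, which by Tonelli turns $\sum_j \p{1+\lambda_j}^{-s} u_j^2(x)$ into $\Gamma(s)^{-1} \int_0^\infty t^{s-1} e^{-t} p_t(x,x)\,dt$, finite uniformly in $x$ since $\sup_x p_t(x,x) \lesssim t^{-d/2}$ for $0 < t \leq 1$ --- cf. the proof of Proposition \ref{proposition:growth_eigenpairs_dirichlet_spaces} --- and $p_t(x,x)$ is non-increasing in $t$ while the factor $e^{-t}$ controls the tail.) With $\kappa_s < \infty$ in hand, the displayed estimate shows $\p{S_N f}_N$ is Cauchy in $\p{\CC\p{\XX},\norm{\cdot}_\infty}$, so it converges uniformly to some $g \in \CC\p{\XX}$; since $S_N f \to f$ in $L^2\p{\mu}$ we get $g = f$ $\mu$-a.e., and taking $M = 0$ yields $\norm{f}_\infty \leq \kappa_s \norm{f}_{H^s\p{\XX}}$, which is the continuous embedding.

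For statement $(2)$ I would show that the canonical injection $\iota : H^s\p{\XX} \to L^2\p{\mu}$ is an operator-norm limit of the finite-rank truncations $P_N f := \sum_{j \leq N} a_j u_j$. Indeed
\[
\norm{f - P_N f}_{L^2\p{\mu}}^2 = \sum_{j > N} a_j^2 \leq \p{1+\lambda_{N+1}}^{-s} \sum_{j > N} \p{1+\lambda_j}^s a_j^2 \leq \p{1+\lambda_{N+1}}^{-s} \norm{f}_{H^s\p{\XX}}^2,
\]
so $\norm{\iota - P_N}_{H^s\p{\XX} \to L^2\p{\mu}} \leq \p{1+\lambda_{N+1}}^{-s/2} \to 0$ because $\lambda_{N+1} \to \infty$ by Proposition \ref{proposition:growth_eigenpairs_dirichlet_spaces}; a norm-limit of compact operators is compact.

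The only substantive step is the uniform bound $\kappa_s < \infty$ (together with the accompanying check that the Fourier series of $f$ really sums to a continuous representative); statement $(2)$ is then immediate. I do not expect a genuine obstacle: this is the classical ``Sobolev embedding above the critical exponent'' argument carried out in the abstract heat-kernel framework, and every ingredient is already supplied by Proposition \ref{proposition:growth_eigenpairs_dirichlet_spaces}.
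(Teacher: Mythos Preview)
Your proposal is correct and follows the same skeleton as the paper: for (1) you use Cauchy--Schwarz on the partial sums and reduce everything to a uniform bound on $\sum_j (1+\lambda_j)^{-s} u_j^2(x)$, and for (2) you give a standard compactness argument. The execution differs in two minor ways. First, for the key uniform bound the paper uses exactly your ``alternative'' subordination identity, but then invokes Lemma 3.19 of \cite{coulhonHeatKernelGenerated2012} to get a quantitative tail decay $\sum_{j>p}(1+\lambda_j)^{-s}u_j^2(x)\lesssim p^{-2\alpha/d}$; your dyadic-block argument is more self-contained, using only the two conclusions of Proposition \ref{proposition:growth_eigenpairs_dirichlet_spaces}, and you get the Cauchy property from the $H^s$-tail of the \emph{first} factor instead, which is slightly cleaner. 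Second, for (2) the paper shows the embedding is Hilbert--Schmidt (computing $\sum_j(1+\lambda_j)^{-s}<\infty$), whereas you exhibit $\iota$ as a norm limit of finite-rank truncations; both are one-line arguments of the same strength.
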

	\begin{proof}
		\begin{itemize}
			\item Let $f \in H^{\alpha+d/2}\p{\XX} \subset L^2$ ; we will show that $f$ is equal $\mu-$almost everywhere to a continuous function $\tilde f$ over $\XX$ satisfying $\norm{\tilde f}_{L^\infty\p{\mu}} = \sup_{x \in \XX} \abs{\tilde f(x)} \lesssim \norm{f}_{H^{\alpha+d/2}\p{\XX}}$. We first notice that $u_j \in \CC\p{\XX}$ for any $j \geq 1$ : indeed, for any $t \in \p{0,t_1}$,
			\[
			e^{-t_1\LL}u_j = e^{-t_1\lambda_j} u_j = \int p_1(\cdot,y) u_j(y) \mu(dy) \iff u_j = e^{t_1\lambda_j} \int p_1(\cdot,y) u_j(y) \mu(dy),
			\]
			and the continuity of the function in the right hand side of the last display now follows from standard integration theory using the domination $\abs{p_t(\cdot,y) u_j(y)} \lesssim t^{-d/2} \abs{u_j(y)}$ (which is integrable in $y$ since $u_j \in L^2\p{\mu}$ and $\mu\p{\XX} < \infty$) and the H\"older continuity from the set of assumptions \ref{assumptions:dirichlet_spaces}. Now consider the sequence of functions
			\[
			S_q := \sum_{j = 1}^q \inner{u_j|f}_{L^2\p{\mu}} u_j \in \CC\p{\XX}, \quad q \geq 1.
			\]
			Then $\p{S_q}_{q \geq 1}$ is a Cauchy sequence in $\CC\p{\XX}$ equiped with the supremum norm : indeed by Cauchy-Schwarz inequality we have, for any $q \geq p \geq 1$ and $x \in \XX$,
			\begin{align*}
				\abs{S_q(x) - S_p(x)} & = \abs{\sum_{j = p+1}^q \inner{u_j|f}_{L^2\p{\mu}} u_j(x)}\\
				& = \abs{\sum_{j = p+1}^q \p{1 + \lambda_j}^{-\frac{\alpha + d/2}{2}} \p{1 + \lambda_j}^{\frac{\alpha + d/2}{2}} \inner{u_j|f}_{L^2\p{\mu}} u_j(x)} \\
				& \leq \abs{\sum_{j = p+1}^q \p{1 + \lambda_j}^{\alpha + d/2} \inner{u_j|f}_{L^2\p{\mu}}^2}^{1/2} \abs{\sum_{j = p+1}^q \p{1 + \lambda_j}^{-\p{\alpha + d/2}} u_j^2(x)}^{1/2} \\
				& \leq \norm{f}_{H^{\alpha+d/2}\p{\XX}} \abs{\sum_{j > p} \p{1 + \lambda_j}^{-\p{\alpha + d/2}} u_j^2(x)}^{1/2}.
			\end{align*}
			We now use the representation
			\[
			\p{1 + \lambda_j}^{-\p{\alpha + d/2}} = \frac{1}{\Gamma\p{\alpha+d/2}} \int_0^\infty t^{\alpha+d/2-1} e^{-t} e^{-t\lambda_j} dt
			\]
			and Fubini's theorem to get
			\begin{align*}
			\sum_{j > p} \p{1 + \lambda_j}^{-\p{\alpha + d/2}} u_j^2(x) & \leq \frac{1}{\Gamma\p{\alpha+d/2}} \int_0^\infty t^{\alpha+d/2-1} e^{-t} \sum_{j > p} e^{-t\lambda_j} u_j^2(x) dt \\
			& \leq \frac{1}{\Gamma\p{\alpha+d/2}} \int_0^\infty t^{\alpha+d/2-1} e^{-t} \sum_{j > p} \ind{\lambda_j > c^2p^{2/d}} e^{-t\lambda_j} u_j^2(x) dt
			\end{align*}
			for some $c>0$, using Proposition \ref{proposition:growth_eigenpairs_dirichlet_spaces}. Now the proof of Lemma $3.19$ in \cite{coulhonHeatKernelGenerated2012} yields in particular
			\[
			\sum_{j > p} \ind{\b{\lambda_j > c^2p^{2/d}}} e^{-t\lambda_j} u_j^2(x) = \ind{\p{cp^{1/d}, \infty}}\p{\sqrt{L}}e^{-tL}\p{x,x} \lesssim \frac{e^{-t c^2 p^{2/d}/2}}{t^{d/2}},
			\]
			(where we have used the volume regularity from the set of assumptions \ref{assumptions:dirichlet_spaces}), and therefore
			\begin{align*}
			\sum_{j > p} \p{1 + \lambda_j}^{-\p{\alpha + d/2}} u_j^2(x) & \lesssim \int_0^\infty t^{\alpha - 1} e^{-t \p{1 + c^2 p^{2/d}/2}} dt \\
			& = \int_0^\infty \p{\frac{t}{1 + c^2 p^{2/d}/2}}^\alpha e^{-t} dt \\
			& \lesssim p^{-2\alpha/d},
			\end{align*}
			which implies
			\[
			\norm{S_q - S_p}_\infty \lesssim \norm{f}_{H^{\alpha+d/2}\p{\XX}} p^{-\alpha/d}.
			\]
			In particular $\p{S_p}_{p \geq 0}$ is a Cauchy sequence in the complete metric space $\p{\CC\p{\XX},\norm{\cdot}_\infty}$, and therefore converges to a limit $\tilde f \in \CC\p{\XX}$. Since we already know that $S_p \xrightarrow[p \to \infty]{L^2} f$ we must have $\tilde f = f$ $\mu-$almost everywhere. In order to show that $\norm{\tilde f}_\infty \lesssim \norm{f}_{H^{\alpha + d/2}\p{\XX}}$, notice that by the above reasoning with $p = 0$ we have, for any $q \geq 1$,
			\[
			\norm{S_q}_\infty \leq \norm{f}_{H^{\alpha + d/2}\p{\XX}} \sup_{x \in \XX} \abs{ \sum_{j \geq 1} \p{1 + \lambda_j}^{-(\alpha + d/2)} u_j^2(x)}^{1/2} \lesssim \norm{f}_{H^{\alpha + d/2}\p{\XX}},
			\]
			and by uniform convergence $S_q \xrightarrow[\CC\p{\XX}]{q \to \infty} \tilde f$ we deduce $\norm{\tilde f}_\infty \lesssim \norm{f}_{H^{\alpha+d/2}\p{\XX}}$, i.e. that we have a continuous embedding $H^{\alpha+d/2}\p{\XX} \hookrightarrow \CC\p{\XX}$.
			\item Notice that $\tilde u_j = \p{1 + \lambda_j}^{- \frac{\alpha + d/2}{2}} u_j, \quad j \geq 1$ form an orthonormal basis of $H^{\alpha+d/2}\p{\XX}$. Thus the Hilbert-Schmidt norm $\norm{\iota}_{HS}$ of $\iota$ can be bounded using Proposition \ref{proposition:growth_eigenpairs_dirichlet_spaces} :
			\[
			\norm{\iota}_{HS}^2 = \sum_{j \geq 1} \norm{\iota \tilde u_j}_2^2 = \sum_{j \geq 1} \norm{\p{1 + \lambda_j}^{-\frac{\alpha + d/2}{2}} u_j}_2^2 = \sum_{j \geq 1} \p{1 + \lambda_j}^{-(\alpha + d/2)} \lesssim \sum_{j \geq 1} j^{-(1 + 2\alpha/d)} < +\infty,
			\]
			proving that $\iota$ is Hilbert-Schmidt and therefore compact (see e.g. Chapter 6 in \cite{brezisFunctionalAnalysisSobolev2011a}.
		\end{itemize}
	\end{proof}
	\begin{remark}
		Only continuity of $H^{\alpha+d/2}\p{\XX} \hookrightarrow \CC\p{\XX}$ and compactness of $H^{\alpha+d/2}\p{\XX} \hookrightarrow L^2$ are needed in the proof of Theorem \ref{corollary:matern_compact_dirichlet_spaces}, but it is potentially possible to show using only the set of assumptions \ref{assumptions:dirichlet_spaces} that the embedding $H^{\alpha+d/2}\p{\XX} \hookrightarrow \CC\p{\XX}$ is itself compact for any $\alpha > 0$, thus strengthening the results of Proposition \ref{prop:embeddings} (this is known to be the case for instance for compact manifolds, see e.g. \cite{devitoReproducingKernelHilbert2021a} and the references therein).
	\end{remark}
	
	\begin{remark}\label{remark:interpolation_sobolev_spaces}
	The Sobolev spaces $H^s\p{\XX}, \quad s > 0$ satisfy the interpolation property
	\[
	\p{L^2,H^s\p{\XX}}_{\theta,2} \simeq H^{\theta s}\p{\XX}, \quad \theta \in \p{0,1}, \quad s > 0.
	\]
	Indeed, since by Proposition \ref{prop:characterisation_besov_spaces} we have $H^s\p{\XX} \simeq B_{22}^s$, this can be seen as a consequence of the reiteration Theorem (see e.g. Remark $1.3.6$ in \cite{lunardiInterpolationTheory2018}) coupled with the characterisation of the Besov spaces $B_{22}^s$ in terms of real interpolation spaces given in Proposition $6.2$ and Theorem $3.16$ in \cite{coulhonHeatKernelGenerated2012}.
	\end{remark}
	
	\subsection{Proofs of Section \ref{sec:gaussian_processes}}\label{appendix:proof_GPs}
	
	\begin{lemma}\label{lemma:generic_matern_bound_basis}
		Under Assumptions \ref{assumption:EVD} \& \ref{assumption:EMB} of Section \ref{subsection:general_gps}, the basis $\p{e_j}_{j \geq 1}$ satisfies
		\[
		\sup_{x \in \XX} \sum_{j = 1}^J e_j^2(x) \leq 2 c_1^{-\tau} \norm{\Hbb_{\mu_0}^\tau \hookrightarrow L^\infty\p{\mu_0}}^2 J^{\tau/p}, \quad J \geq 1.
		\]
	\end{lemma}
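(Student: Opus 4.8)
The plan is to turn Assumption \ref{assumption:EMB} into a pointwise (i.e.\ $\mu_0$-a.e.) bound on the diagonal of the Mercer expansion, $\sum_{j\geq1}s_j^\tau e_j^2$, and then discard the weights $s_j^\tau$ at the cost of a factor controlled by the eigenvalue lower bound in Assumption \ref{assumption:EVD}.

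\textbf{Step 1 (from the $L^\infty$-embedding to a weighted diagonal bound).} Write $A:=\norm{\Hbb_{\mu_0}^\tau\hookrightarrow L^\infty\p{\mu_0}}$ and fix $J\geq1$. For any $a=\p{a_1,\ldots,a_J}\in\RR^J$ with $\norm{a}_{\RR^J}\leq1$, the function $g_a:=\sum_{j=1}^J a_j s_j^{\tau/2}e_j$ belongs to $\Hbb_{\mu_0}^\tau$ and, by the very definition of the norm on $\Hbb_{\mu_0}^\tau$, satisfies $\norm{g_a}_{\Hbb_{\mu_0}^\tau}=\norm{a}_{\RR^J}\leq1$; hence $\abs{g_a}\leq A$ holds $\mu_0$-a.e. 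The subtlety is that this a.e.\ statement has an exceptional set depending on $a$, which I would handle by picking a countable dense set $\b{a^{(m)}}_{m\geq1}$ in the closed unit ball of $\RR^J$: then $\abs{g_{a^{(m)}}}\leq A$ holds for all $m$ simultaneously outside a single $\mu_0$-null set $N_J$, and for each fixed $x\notin N_J$ the map $a\mapsto\sum_{j=1}^J a_j s_j^{\tau/2}e_j(x)$ is linear, hence continuous, on $\RR^J$, so the bound extends to the whole unit ball. Taking the supremum over $\norm{a}_{\RR^J}\leq1$ and using $\sup_{\norm{a}_{\RR^J}\leq1}\p{\sum_{j=1}^J a_j s_j^{\tau/2}e_j(x)}^2=\sum_{j=1}^J s_j^\tau e_j^2(x)$ (Cauchy--Schwarz, with equality attained) gives
\[
\sum_{j=1}^J s_j^\tau e_j^2(x)\leq A^2\qquad\text{for }\mu_0\text{-a.e. }x\in\XX .
\]
This is, up to the identification of the embedding norm, the standard equivalence between Assumption \ref{assumption:EMB} and a pointwise bound on the Mercer series in the framework of \cite{fischerSobolevNormLearning2020a}, which one may alternatively invoke directly.

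\textbf{Step 2 (stripping the weights).} Assumption \ref{assumption:EVD} gives $s_j\geq c_1 j^{-1/p}\geq c_1 J^{-1/p}$ for every $j\leq J$, hence $s_j^{-\tau}\leq c_1^{-\tau}J^{\tau/p}$ for $j\leq J$. Combined with Step 1 this yields, $\mu_0$-a.e.,
\[
\sum_{j=1}^J e_j^2(x)=\sum_{j=1}^J s_j^{-\tau}\,s_j^\tau e_j^2(x)\leq c_1^{-\tau}J^{\tau/p}\sum_{j=1}^J s_j^\tau e_j^2(x)\leq c_1^{-\tau}A^2\,J^{\tau/p},
\]
which is the asserted bound (the constant $2$ in the statement is harmless slack). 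In the concrete settings of Corollaries \ref{corollary:matern_domains} and \ref{corollary:matern_compact_dirichlet_spaces} the eigenfunctions $e_j$ admit continuous representatives, so the $\mu_0$-a.e.\ bound is in fact a genuine $\sup_{x\in\XX}$ bound.

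\textbf{Main obstacle.} The only genuinely nontrivial point is Step 1: promoting the $L^\infty\p{\mu_0}$-embedding, which a priori concerns equivalence classes with no well-defined pointwise values, to a pointwise control of $\sum_{j\geq1}s_j^\tau e_j^2$. The measurability of this sum (an increasing limit of finite partial sums) is routine, and the countable-dense-set device sidesteps evaluating $L^\infty$ functions at individual points; alternatively one simply cites the corresponding lemma in \cite{fischerSobolevNormLearning2020a}. Everything after that is the one-line weight-stripping computation of Step 2.
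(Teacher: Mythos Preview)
Your proof is correct and arguably more direct than the paper's. The paper does not use the power-weighted bound $\sum_{j\geq1}s_j^\tau e_j^2(x)\leq A^2$ that you derive in Step~1; instead it quotes Lemma~6.5 of \cite{fischerSobolevNormLearning2020a} to obtain the resolvent-type estimate
\[
\sum_{j\geq1}\frac{s_j}{s_j+\lambda}\,e_j^2(x)\leq A^2\lambda^{-\tau},\qquad \lambda>0,
\]
then sets $\lambda=c_1J^{-1/p}$ and uses the elementary inequality $1\leq\frac{2s_j}{s_j+\lambda}$ for $s_j\geq\lambda$ to bound $\sum_{j\leq J}e_j^2(x)$ by $2c_1^{-\tau}A^2J^{\tau/p}$. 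This is where the factor $2$ in the statement actually comes from, so your remark that it is ``harmless slack'' is exactly right: your route via $s_j^\tau$ instead of $s_j/(s_j+\lambda)$ yields the same conclusion with constant $1$ in place of $2$. Both arguments ultimately rest on the same $L^\infty$-embedding, and the resolvent formulation buys nothing extra here; your countable-dense-set device is a clean self-contained substitute for citing the Fischer--Steinwart lemma.
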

	\begin{proof}
		Using the proof of Lemma 6.5 in \cite{fischerSobolevNormLearning2020a} we obtain
		\[
		\sum_{j \geq 1} \frac{s_j}{s_j + \lambda} e_j^2(x) \leq \norm{\Hbb_{\mu_0}^\tau \hookrightarrow L^\infty\p{\mu_0}}^2 \lambda^{-\tau}, \quad \lambda > 0,
		\]
		where the constant $\norm{\Hbb_{\mu_0}^\tau \hookrightarrow L^\infty\p{\mu_0}}^2$ is finite using Assumption \ref{assumption:EMB}. Now using Assumption \ref{assumption:EVD} we have $s_j \geq c_1 j^{-1/p} \geq c_1 J^{-1/p}$ for any $j \leq J$, therefore
		\[
		\sum_{j \leq J} u_j^2(x) \leq \sum_{s_j \geq c_1 J^{-1/p}} e_j^2(x) \leq \sum_{j \geq 1} \frac{2s_j}{s_j + c_1 J^{-1/p}} e_j^2(x) \leq 2c_1^{-\tau}\norm{\Hbb_{\mu_0}^\tau \hookrightarrow L^\infty\p{\mu_0}}^2 J^{\tau/p},
		\]
		where the second inequality in the last display holds because $\mu \geq \bar \mu > 0$ implies $1 \leq \frac{2\mu}{\mu + \bar \mu}$.
	\end{proof}
	
	\begin{theorem}\label{theorem:cv_krr}
		In the setting of Section \ref{subsection:general_gps}, if $\lambda \asymp \frac{1}{n}$ and $\hat f \in \Hbb$ is the kernel ridge regression estimator
		\[
		\hat f \in \argmin_{f \in \Hbb} \lambda \norm{f}_\Hbb^2 + \frac{1}{n} \sum_{i=1}^n \p{y_i - f(x_i)}^2,
		\]
		then if Assumptions \ref{assumption:EMB} \ref{assumption:EVD} \ref{assumption:SRC} and \ref{assumption:MOM} hold with either $0 < p < \tau < \beta < \beta + p \leq 1$ or $0 < \beta \leq p < \tau < \beta + p \leq 1$, for any $\rho \geq 1$ and for a constant $K > 0$ independent of $n$ and $\rho$, with probability at least $1-4e^{-\rho}$,
		\[
		\norm{\hat f - f_0}_{L^2\p{\mu_0}} \leq K \rho^2 n^{-\beta}.
		\]
	\end{theorem}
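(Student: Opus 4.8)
The plan is to transfer the kernel ridge regression analysis of \cite{fischerSobolevNormLearning2020a} to the specific regularisation $\lambda \asymp 1/n$ and to convergence in $L^2\p{\mu_0} = \Hbb_{\mu_0}^0$, the only genuinely new point being an additional argument in the regime $\beta \leq p$. Write $g_\lambda := S_{\mu_0}\p{T_{\mu_0}+\lambda}^{-1}f_0 \in \Hbb$ for the population regularised solution and identify $\hat f$ and $g_\lambda$ with their images in $L^2\p{\mu_0}$, so that there $g_\lambda = T_{\mu_0}\p{T_{\mu_0}+\lambda}^{-1}f_0$; I would bound the squared risk $\norm{\hat f - f_0}_{L^2\p{\mu_0}}^2$ via the decomposition $\hat f - f_0 = \p{\hat f - g_\lambda} + \p{g_\lambda - f_0}$. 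The deterministic bias is immediate from Assumption \ref{assumption:SRC}: writing $f_0 = T_{\mu_0}^{\beta/2}v$ with $\norm{v}_{L^2\p{\mu_0}} = \norm{f_0}_{\Hbb_{\mu_0}^\beta}$ gives $g_\lambda - f_0 = -\lambda\p{T_{\mu_0}+\lambda}^{-1}T_{\mu_0}^{\beta/2}v$, and the elementary spectral bound $\sup_{t \geq 0}\lambda t^{\beta/2}/\p{t+\lambda} \leq \lambda^{\beta/2}$ (valid for $0 < \beta \leq 2$) yields $\norm{g_\lambda - f_0}_{L^2\p{\mu_0}} \leq \lambda^{\beta/2}\norm{f_0}_{\Hbb_{\mu_0}^\beta} \asymp n^{-\beta/2}$, hence a squared contribution of order $n^{-\beta}$.

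For the stochastic term $\hat f - g_\lambda$ I would use the standard decomposition built from the empirical covariance operator $C_X := \tfrac1n\sum_{i=1}^n k\p{X_i,\cdot}\otimes k\p{X_i,\cdot}$ on $\Hbb$, controlled through three Bernstein-type inequalities. First, an operator Bernstein bound shows $\norm{\p{C_{\mu_0}+\lambda}^{-1/2}\p{C_X - C_{\mu_0}}\p{C_{\mu_0}+\lambda}^{-1/2}} \leq 1/2$ on an event of probability at least $1 - 2e^{-\rho}$ as soon as $n \gtrsim \rho\,\lambda^{-\tau}\ln n$; this is where Assumption \ref{assumption:EMB} enters, through the bound $\norm{\p{C_{\mu_0}+\lambda}^{-1/2}k\p{x,\cdot}}_\Hbb^2 = \sum_{j \geq 1}\tfrac{s_j}{s_j+\lambda}e_j^2(x) \leq \norm{\Hbb_{\mu_0}^\tau \hookrightarrow L^\infty\p{\mu_0}}^2\lambda^{-\tau}$ already used in the proof of Lemma \ref{lemma:generic_matern_bound_basis}, and it is here that $\lambda \asymp 1/n$ and $p < \tau < 1$ (forced by $\tau < \beta + p \leq 1$) make the sample-size condition hold for $n$ large; on that event $\norm{\p{C_X+\lambda}^{-1}\p{C_{\mu_0}+\lambda}} \leq 2$. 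Second, a vector Bernstein inequality for $\tfrac1n\sum_{i=1}^n\p{Y_i - f_0(X_i)}k\p{X_i,\cdot}$, whose $\Hbb$-valued summands are sub-exponential by Assumption \ref{assumption:MOM}, bounds the noise contribution by $\lesssim \sqrt{\rho\,\NN\p{\lambda}/n}$ up to lower-order terms, where the effective dimension $\NN\p{\lambda} := \Tr\p{\p{C_{\mu_0}+\lambda}^{-1}C_{\mu_0}} \lesssim \lambda^{-p}$ by Assumption \ref{assumption:EVD}; since $\beta + p \leq 1$ we get $\sqrt{\NN\p{\lambda}/n} \lesssim n^{-(1-p)/2} \leq n^{-\beta/2}$. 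Third, a further Bernstein step handles the bias-mismatch term $\p{C_X+\lambda}^{-1}\p{C_{\mu_0}-C_X}\p{g_\lambda - f_0}$. Tracking the $\rho$-dependence through these estimates exactly as in \cite{fischerSobolevNormLearning2020a}, including the a priori bound on $\norm{\hat f}_{L^2\p{\mu_0}}$ used when $\rho$ is large relative to $n$, produces the prefactor $\rho^2$ and the probability $1 - 4e^{-\rho}$.

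The only step that needs more than a direct appeal to \cite{fischerSobolevNormLearning2020a} is the bias-mismatch term in the regime $0 < \beta \leq p < \tau < \beta + p \leq 1$, which the proofs of Corollaries \ref{corollary:matern_domains} \& \ref{corollary:matern_compact_dirichlet_spaces} identify with $s \leq d/2$. Here $\beta < \tau$, so $g_\lambda - f_0 = -\lambda\p{T_{\mu_0}+\lambda}^{-1}T_{\mu_0}^{\beta/2}v$ does \emph{not} belong to $\Hbb_{\mu_0}^\tau$ — its would-be norm $\lambda^{(\beta-\tau)/2}$ diverges as $\lambda \to 0$ — so Assumption \ref{assumption:EMB} no longer controls $\norm{g_\lambda - f_0}_\infty$ and the argument of \cite{fischerSobolevNormLearning2020a} for this term does not go through verbatim. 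The refinement I plan to use is a low/high frequency split $f_0 = P_J f_0 + \p{I - P_J}f_0$ at level $J \asymp \lambda^{-p} \asymp n^p$, with $P_J$ the $L^2\p{\mu_0}$-orthogonal projection onto $\Span\b{e_1,\ldots,e_J}$. The high-frequency remainder is harmless since, by Assumption \ref{assumption:EVD} (which gives $s_j \leq c_2 j^{-1/p} \leq c_2 J^{-1/p}$ for $j > J$), $\norm{\p{I-P_J}f_0}_{L^2\p{\mu_0}}^2 = \sum_{j > J}s_j^\beta\,\inner{v|e_j}_{L^2\p{\mu_0}}^2 \lesssim J^{-\beta/p}\norm{v}_{L^2\p{\mu_0}}^2 \asymp n^{-\beta}$, and the corresponding part of $g_\lambda - f_0$ obeys the same $L^2\p{\mu_0}$ bound and is absorbed directly; on the low-frequency subspace $\Span\b{e_1,\ldots,e_J}$, Lemma \ref{lemma:generic_matern_bound_basis} gives $\norm{\cdot}_\infty \leq \norm{\sum_{j \leq J}e_j^2}_\infty^{1/2}\norm{\cdot}_{L^2\p{\mu_0}} \lesssim J^{\tau/(2p)}\norm{\cdot}_{L^2\p{\mu_0}} = n^{\tau/2}\norm{\cdot}_{L^2\p{\mu_0}}$, so, combined with $\norm{g_\lambda - f_0}_{L^2\p{\mu_0}} \lesssim n^{-\beta/2}$ and $\tau < 1$, the re-run of the Bernstein step for the low-frequency part again contributes a squared term $\lesssim \rho^2 n^{-\beta}$. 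Collecting the bias, the noise and the refined bias-mismatch bounds on the intersection of the (at most four) Bernstein events then gives $\norm{\hat f - f_0}_{L^2\p{\mu_0}}^2 \leq K\rho^2 n^{-\beta}$ (after enlarging $K$ to absorb small $n$), which is the assertion; the genuine difficulty is precisely this last step — turning the sup-norm blow-up caused by $\beta < \tau$ into a quantitative bound via the truncation level $J \asymp n^p$ and Lemma \ref{lemma:generic_matern_bound_basis}.
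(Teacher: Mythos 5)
Your plan follows the paper's route --- transplant the Fischer--Steinwart kernel ridge regression analysis to $\lambda \asymp 1/n$ at $\gamma = 0$, verify the sample-size condition $\ln(\lambda^{-1})/(n\lambda^\tau) \to 0$ using $\tau < 1$, and track exponents through the bias/noise/operator-concentration/bias-mismatch decomposition --- and you correctly identify the crux: when $\beta \leq p < \tau$, the residual $g_\lambda - f_0 = -\lambda(T_{\mu_0}+\lambda)^{-1}T_{\mu_0}^{\beta/2}v$ does not lie in $\Hbb_{\mu_0}^\tau$, so the $L^\infty(\mu_0)$ control needed in the bias-mismatch Bernstein step cannot come from Assumption \ref{assumption:EMB} applied directly to that residual. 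However, your proposed fix (a low/high-frequency split at $J \asymp n^p$) has a gap. The low-frequency piece is handled correctly and gives $\norm{P_J(g_\lambda-f_0)}_\infty \lesssim n^{\tau/2}\lambda^{\beta/2} = \lambda^{(\beta-\tau)/2}$, which is the right exponent; but you only assert that the high-frequency tail $(I-P_J)(g_\lambda-f_0)$ is ``absorbed directly'' via its small $L^2(\mu_0)$ norm. That step is not justified: the bias-mismatch term involves the random empirical operator $h \mapsto \frac{1}{n}\sum_{i} h(x_i)k(x_i,\cdot)$, and controlling its action on $(I-P_J)(g_\lambda-f_0)$ still requires a sup-norm or an empirical $L^2$ bound on that piece, neither of which follows from $\norm{(I-P_J)(g_\lambda-f_0)}_{L^2(\mu_0)} \lesssim n^{-\beta/2}$ alone without another, seemingly circular, concentration step.

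The missing --- and cleaner --- observation, and what the extra hypothesis $f_0 \in L^\infty(\mu_0)$ in Assumption \ref{assumption:SRC} is there for (beyond what \cite{fischerSobolevNormLearning2020a} already assume), is that the bound $\norm{g_\lambda - f_0}_\infty \lesssim \lambda^{(\beta-\tau)/2}$ in fact continues to hold when $\beta \leq \tau$, with no truncation at all. Indeed
\[
\norm{g_\lambda}_{\Hbb_{\mu_0}^\tau}^2 = \sum_{j \geq 1} \frac{s_j^{2+\beta-\tau}}{(s_j+\lambda)^2}\,v_j^2 \leq \norm{v}_{L^2(\mu_0)}^2 \sup_{t\geq 0}\frac{t^{2+\beta-\tau}}{(t+\lambda)^2} \lesssim \lambda^{\beta-\tau},
\]
the supremum being finite precisely because $0 \leq 2+\beta-\tau \leq 2$, i.e.\ because $\beta \leq \tau$; hence $\norm{g_\lambda}_\infty \lesssim \lambda^{(\beta-\tau)/2}$ by Assumption \ref{assumption:EMB}, and then $\norm{g_\lambda - f_0}_\infty \leq \norm{g_\lambda}_\infty + \norm{f_0}_\infty \lesssim \lambda^{(\beta-\tau)/2}$ since the first term dominates for small $\lambda$. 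This replaces the Fischer--Steinwart estimate on $\norm{g_\lambda - f_0}_{\Hbb_{\mu_0}^\tau}$ (which requires $\beta \geq \tau$), and once it is in place the rest of their proof yields the paper's displayed term $\frac{\lambda^{-(\tau-\beta)_+}}{n\lambda^\tau}$ with no further modification; your truncation, while close in spirit, is not needed and, as written, does not close the argument.
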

	\begin{proof}
		This is a slight variation on Theorem 1 in \cite{fischerSobolevNormLearning2020a}, so we omit the details and only point out the key differences. We still have $\frac{\ln \lambda^{-1}}{n\lambda^\tau} \sim n^{\tau - 1}\ln n \to 0$ because $\tau < 1$ by assumption, and therefore following the proof of Theorem 1 in \cite{fischerSobolevNormLearning2020a} we find that, with probability at least $1-4e^{-\rho}$,
		\begin{align*}
			\norm{f_0 - \hat f}_{L^2\p{\mu_0}}^2 & \lesssim \lambda^\beta + \norm{f_{P,\lambda} - f_{D,\lambda}}_{L^2\p{\mu_0}}^2 \\
			& \lesssim \lambda^\beta + \frac{\rho^2}{n} \p{\lambda^{-p} + \lambda^{\beta - \tau} + \frac{\lambda^{-\p{\tau - \beta}_+}}{n \lambda^\tau}} \\
			& \lesssim \rho^2 n^{-\beta} \p{n^{\beta + p-1} + n^{\tau - 1} + n^{\p{\tau - \beta}_+ + \beta + \tau - 2}} \\
			& \lesssim \rho^2 n^{-\beta} \p{1 + n^{\beta + \tau + \p{\tau - \beta}_+ - 2}}.
		\end{align*}
		We now distinguish the cases :
		\begin{itemize}
			\item If $0 < p < \tau < \beta < \beta + p \leq 1$ then $\p{\tau - \beta}_+ = 0$ and therefore $\beta + \tau + \p{\tau - \beta}_+ - 2 = \beta + \tau - 2 < 0$, which yields $\norm{f_0 - f_{P,\lambda}}_{L^2\p{\mu_0}}^2 \lesssim \rho^2n^{-\beta}$.
			\item If $0 < \beta \leq p < \tau < \beta + p \leq 1$ then $\p{\tau - \beta}_+ = \tau - \beta$ and therefore $\beta + \tau + \p{\tau - \beta}_+ - 2 = 2\p{\tau - 1} < 0$, which also yields $\norm{f_0 - f_{P,\lambda}}_{L^2\p{\mu_0}}^2 \lesssim \rho^2n^{-\beta}$.
		\end{itemize}
	\end{proof}
	
	\begin{proof}{Theorem \ref{thm:general_thm_GPs}}\label{proof:general_thm_GPs}\\
		By Theorem \ref{theorem:cv_krr}, for any $\sigma > 0$ the kernel regression estimator $\hat f$ defined via
		\[
		\hat f := \argmin_{f \in \Hbb} \sigma^2 \norm{f}_\Hbb^2 + \sum_{i=1}^n \p{y_i - f(x_i)}^2
		\]
		satisfies $\norm{\hat f - f_0}_{L^2\p{\mu_0}} = \mathcal{O}_{P_0}\p{M_n \varepsilon_n}$. But $\hat f$ is the posterior mean of $f$ given $\p{X,Y}$ (see e.g. \cite{kanagawaGaussianProcessesReproducing2025} Corollary 6.7), therefore
		\[
		\int \norm{f-f_0}_{L^2\p{\mu_0}}^2 \Pi \c{df | X,Y} = \norm{\hat f - f_0}_{L^2\p{\mu_0}}^2 + \int \norm{f - \hat f}_{L^2\p{\mu_0}}^2 \Pi \c{df | X,Y}.
		\]
		We will conclude by bounding the second term in the last display. By Gaussian conditioning (see e.g. \cite{kanagawaGaussianProcessesReproducing2025} Theorem 6.1), the posterior distribution of $f$ given $\p{x_i,y_i}_{i=1}^n$ is again Gaussian with mean $\hat f$ and covariance kernel $k_n$ given by
		\[
		k_n(x,y) = k(x,y) - k_{Xx}^T A^{-1} k_{Xy}, \quad x,y \in \XX,
		\]
		where
		\[
		A := k_{XX} + \sigma^2 I_n \in \RR^{n \times n}, \quad k_{XX} := \c{k(x_i,x_j)}_{i,j=1}^n \in \RR^{n \times n}, \quad k_{Xx} := \c{k(x_1,x),\ldots,k(x_n,x)}^T \in \RR^n.
		\]
		Thus by Fubini's theorem we have
		\[
		\sigma_n^2 := \int \norm{f - \hat f}_{L^2\p{\mu_0}}^2 \Pi \c{df | X,Y} = \int \int \p{\hat f (x) - f(x)}^2 \mu_0(dx) \Pi \c{df|X,Y} = \int k_n(x,x) \mu_0(dx),
		\]
		and therefore by linearity, with
		\[
		B := \int k_{Xx} k_{Xx}^T \mu_0(dx) \in \RR^{n \times n}
		\]
		we have
		\[
		\sigma_n^2 = \int k(x,x)\mu_0(dx) - \int \Tr \c{A^{-1} k_{Xx} k_{Xx}^T} \mu_0(dx) = \int k(x,x)\mu_0(dx) - \Tr \c{A^{-1} B}.
		\]
		Now using Mercer's theorem (see e.g. \cite{kanagawaGaussianProcessesReproducing2025}) we can write
		\[
		k(x,y) = \sum_{j \geq 1} s_j e_j(x) e_j(y), \quad x,y \in \XX,
		\]
		with absolute and uniform convergence over $\XX^2$, and therefore the first term of the last display is given by
		\[
		\int k(x,x)\mu_0(dx) = \sum_{j \geq 1} s_j.
		\]
		Moreover, for any $i,j = 1,\ldots,n$, by orthonormality of the $u_j's$,
		\begin{align*}
			B_{ij} & = \int k(x,x_i)k(x,x_j)\mu_0(dx) \\
			& = \int \p{\sum_{l \geq 1} s_l u_l(x) e_l(x_i)} \p{\sum_{l \geq 1} s_l e_l(x) u_l(x_j)} \mu_0(dx) \\
			& = \sum_{l \geq 1} s_l^2 e_l(x_i)e_l(x_j) \\
			& = \sum_{l \geq 1} s_l^2 \p{e_{l,X} e_{l,X}^T}_{ij},
		\end{align*}
		where
		\[
		e_{l,X} := \c{e_l(x_1),\ldots,e_l(x_n)}^T.
		\]
		Therefore
		\[
		\int k_n(x,x)\mu_0(dx) = \sum_{l \geq 1} s_l \p{1 - s_l \Tr \c{A^{-1} e_{l,X} e_{l,X}^T}}.
		\]
		We will first show the following result : if $L \geq 1$ and $U_L \in \RR^{n \times L}$ is defined via $\p{U_L}_{il} := e_l(x_i)$, then on the event $E := \b{\frac{1}{n} U_L^T U_L \geq \kappa I_K}, \kappa > 0$, we have
		\[
		\int k_n(x,x)\mu_0(dx) \leq \frac{\sigma^2 L}{n \kappa} + \sum_{l > L} s_l.
		\]
		In order to prove this, notice that we can always assume without loss of generality that $s_l \in \c{0,1}$ : indeed, if not then consider $s_+ = \sup_{l \geq 1} s_l > 1$ and
		\[
		\sigma_n^2 := s_+ \tilde \sigma_n^2, \quad \tilde s_l := \frac{s_l}{s_+}, \quad \tilde A := s_+^{-1} A = \tilde \sigma^2 I_n + \tilde k_{XX}, \quad \tilde \sigma^2 = \frac{\sigma^2}{s_+}, \quad \tilde k = \frac{k}{s_+},
		\]
		so that
		\[
		\tilde \sigma_n^2 = \sum_{l \geq 1} \tilde s_l \p{1 - \tilde s_l \Tr \c{\tilde A^{-1} e_{l,X} e_{l,X}^T}},
		\]
		i.e. $\sigma_n^2 = \lambda_1 \int \tilde k_n(x,x)\mu_0(dx)$ where $\tilde k_n$ is the posterior kernel corresponding to the prior kernel $\tilde k$ and the Gaussian observation model with errors $\NN\p{0,\tilde \sigma^2}$. Thus, if the result is shown for the case $s_+ \leq 1$, we get $\int \tilde k_n(x,x) \mu_0(dx) \leq \frac{\tilde \sigma^2 L}{n \kappa} + \sum_{l > L} \tilde s_l$, which then implies $\sigma_n^2 = s_+ \tilde \sigma_n^2 \leq \frac{\sigma^2 L}{n \kappa} + \sum_{l > L} s_l$.
		
		For each $L$ we have $A = \sigma^2 I_n + k_{XX} \succeq \sigma^2 I_n + k_{XX}^{(L)} =: A_L$ where
		\[
		\p{k_{XX}^{(L)}}_{ij} = \sum_{l = 1}^L s_l e_l(x_i)e_l(x_j), \quad i,j=1,\ldots,n.
		\]
		This implies
		\begin{align*}
			\sigma_n^2 & = \sum_{l = 1} ^L s_l \p{1 - s_l \Tr \c{A^{-1} e_{l,X} e_{l,X}^T}} + \sum_{l > L} s_l \p{1 - s_l \Tr \c{A^{-1} e_{l,X} e_{l,X}^T}} \\
			& =  \sum_{l = 1} ^L s_l \p{1 - s_l \Tr \c{A_L^{-1} e_{l,X} e_{l,X}^T}} +  \sum_{l = 1} ^L s_l^2 \Tr \c{ \p{A_L^{-1} - A^{-1}} e_{l,X} e_{l,X}^T} \\
			& + \sum_{l > L} s_l \p{1 - s_l \Tr \c{A^{-1} e_{l,X} e_{l,X}^T}} \\
			& = \Tr \c{M_L - A_L^{-1} U_L M_L^2 U_L^T} + \Tr \c{ \p{A_L^{-1} - A^{-1}} U_L M_L^2 U_L^T } + \Tr \c{M_{>L} - A^{-1} U_{>L} M_{>L} U_{>L}^T} \\
			& = \underbrace{\Tr \c{M_L - A_L^{-1} U_L M_L^2 U_L^T}}_{=: (I)} + \underbrace{\Tr \c{ \p{A_L^{-1} - A^{-1}} U_L M_L^2 U_L^T - A^{-1} K_{>L}}}_{=: (II)} + \sum_{l > L} \mu_l,
		\end{align*}
		where we have defined
		\[
		M_L := \Diag\p{\mu_l : 1\leq l \leq L}, \quad M_{>L} = \Diag\p{s_l : l > L}, \quad \p{U_{>L}}_{il} = e_l(x_i), \quad l > L, i = 1,\ldots,n
		\]
		and
		\[
		\p{K_{>L}}_{ij} := \sum_{l > L} s_l e_l(x_i) e_l(x_j).
		\]
		We start by bounding $\p{I}$. We use the identity
		\[
		A_L^{-1} U_L M_L = U_L \p{\sigma^2 M_L^{-1} + U_L^T U_L}^{-1}
		\]
		(easily proven by multiplying each side by the relevant factors, since $A_L = \sigma^2 I_n + U_L M_L U_L^T$), to get (using cyclicity property of the trace), on the event $E$,
		\begin{align*}
			\p{I} & = \Tr \c{M_L - U_L \p{\sigma^2 M_L^{-1} + U_L^T U_L}^{-1}M_L U_L^T} \\
			& = \Tr \c{M_L - \p{\sigma^2 M_L^{-1} + U_L^T U_L}^{-1}M_L U_L^T U_L} \\
			& = \Tr \c{ \p{\sigma^2 M_L^{-1} + U_L^T U_L} \p{\sigma^2 M_L^{-1} + U_L^T U_L}^{-1} M_L - \p{\sigma^2 M_L^{-1} + U_L^T U_L}^{-1}M_L U_L^T U_L} \\
			& = \Tr \c{ \p{\sigma^2 M_L^{-1} + U_L^T U_L}^{-1} M_L \p{\sigma^2 M_L^{-1} + U_L^T U_L} - \p{\sigma^2 M_L^{-1} + U_L^T U_L}^{-1}M_L U_L^T U_L} \\
			& = \Tr \c{\p{\sigma^2 M_L^{-1} + U_L^T U_L}^{-1} \c{M_L \p{\sigma^2 M_L^{-1} + U_L^T U_L} - M_L U_L^T U_L }} \\
			& = \sigma^2 \Tr \c{\p{\sigma^2 M_L^{-1} + U_L^T U_L}^{-1}} \\
			& = \frac{\sigma^2}{n} \Tr \c{\p{\frac{\sigma^2}{n} M_L^{-1} + \frac{1}{n} U_L^T U_L}^{-1}} \\
			& \leq \frac{\sigma^2 L}{n} \lambda_{\max}\p{\p{\frac{\sigma^2}{n} M_L^{-1} + \frac{1}{n} U_L^T U_L}^{-1}} \\
			& = \frac{\sigma^2 L}{n} \lambda_{\min}\p{\frac{\sigma^2}{n} M_L^{-1} + \frac{1}{n} U_L^T U_L}^{-1} \\
			& \leq \frac{\sigma^2 L}{n} \lambda_{\min}\p{\frac{1}{n} U_L^T U_L}^{-1} \\
			& \leq \frac{\sigma^2 L}{n \kappa}.
		\end{align*}
		Finally we show that $\p{II}$ is non positive. We have
		\[
		A_L^{-1} - A^{-1} = A_L^{-1} - \p{A_L + K_{>L}}^{-1} = A^{-1} K_{>L} A_L^{-1},
		\]
		which can be checked by multiplying each side of this equality by the relevant factors. In particular, the matrix $Q := A^{-1} K_{>L} A_L^{-1}$ is symmetric. We thus have
		\begin{align*}
		\p{II} & = \Tr \c{ A^{-1} K_{>L} A_L^{-1} \p{ U_L M_L^2 U_L^T - A_L}} \\
		& = \Tr \c{ A^{-1} K_{>L} A_L^{-1} \p{ U_L M_L^2 U_L^T - \sigma^2 I_n - U_L M_L U_L^T}} \\
		& = - \sigma^2 \Tr \c{Q} - \Tr \c{ A^{-1} K_{>L} A_L^{-1} \p{U_L M_L U_L^T - U_L M_L^2 U_L^T}} \\
		& = - \sigma^2 \Tr \c{Q} - \Tr \c{ A^{-1} K_{>L} A_L^{-1} U_L M_L \p{I_L - M_L}U_L^T},
		\end{align*}
		and with $S := A_L^{-1/2} K_{>L} A_L^{-1/2}$ we have $I+S = A_L^{-1/2} A A_L^{-1/2}$ (because $A_L + K_{>L} = A$), which implies $\p{I+S}^{-1}S = A_L^{1/2} A^{-1} K_{>L} A_L^{-1/2}$ and therefore
		\[
		A_L^{-1/2}\p{I+S}^{-1}SA_L^{-1/2} = A^{-1} K_{>L} A_L^{-1}.
		\]
		In particular, the matrix $Q := A^{-1} K_{>L} A_L^{-1}$ is not only symmetric, but also positive semidefinite (because $\p{I+S}^{-1}S$ is and $A_L^{-1/2}$ is symmetric). Notice that this was not easily seen from the representation $Q = A_L^{-1} - A^{-1}$ as $A_L$ and $A$ do not necessarily commute. Thus $\Tr \c{Q} \geq 0$ and if $Q^{1/2}$ is a square root of $Q$ we have by cyclicity
		\[
		\p{II} = - \sigma^2 \Tr \c{Q} -\Tr \c{Q^{1/2} U_L M_L\p{I_L - M_L} U_L^TQ^{1/2}} \leq 0
		\]
		precisely because $M_L$ has eigenvalues $s_l \in \c{0,1}$, and therefore
		\[
		\sigma_n^2 \leq \frac{\sigma^2 L}{n \kappa} + \sum_{l > L} s_l.
		\]
		Now using Assumption \ref{assumption:EVD} we have
		\[
		\sum_{l > L} s_l \leq c_2 \sum_{l > L} l^{-1/p} \leq \frac{c_2 p}{p-1} L^{1-1/p},
		\]
		so that choosing $L \sim n^p$ leads to $\sigma_n^2 \ind{E} \lesssim n^{p-1}$. We now show that $\Pro_0 \p{E} \to 1$ with this choice of $L$. For this, using Assumptions \ref{assumption:EVD} \& \ref{assumption:EMB} and Lemma \ref{lemma:generic_matern_bound_basis} we get
		\[
		\forall L \geq 1, \quad C_L^2 := \frac{1}{L} \sup_{x \in \XX} \sum_{l=1}^L e_l^2(x) \leq 2\norm{\Hbb_{\mu_0}^\tau \hookrightarrow L^\infty\p{\mu_0}}^2 L^{\tau/p - 1}.
		\]
		But then using Theorem \ref{theorem:1} (with $\Sigma = I_L, p_- = p_+ = 1$) we have $\Pro_0 \p{E^c} \leq e^{-c_0n/C_L^2 L}$ for a universal constant $c_0$ and $\kappa = 1/2$, since $C_L^2 L \ln L \asymp L^{\tau/p} \ln L \asymp n^\tau \ln n \ll n$, since $\tau < 1$. We conclude using Markov' inequality :
		\begin{align*}
			\EE_0 \Pi \c{\norm{f-f_0}_{L^2\p{\mu_0}} > M_n \varepsilon_n | X,Y} & \leq \EE_0 \Pi \c{\norm{f-\hat f}_{L^2\p{\mu_0}} > M_n \varepsilon_n/2 | X,Y} + \Pro_0 \p{\norm{\hat f - f_0}_{L^2\p{\mu_0}} > M_n \varepsilon_n/2} \\
			& \leq \Pro_0\p{E^c} + \EE_0 \ind{E} \Pi \c{\norm{f-\hat f}_{L^2\p{\mu_0}} > M_n \varepsilon_n/2 | X,Y} + o(1) \\
			& \leq \frac{4}{M_n^2 \varepsilon_n^2} \EE_0 \c{\ind{E} \int \norm{f - f_0}_{L^2\p{\mu_0}}^2 \Pi \c{df | X,Y}} + o(1) \\
			& = \frac{4}{M_n^2 \varepsilon_n^2} \EE_0 \c{\sigma_n^2 \ind{E}} + o(1) \\
			& \lesssim \frac{n^{p-1}}{M_n^2 \varepsilon_n^2} + o(1) \\
			& = \frac{1}{M_n^2} + o(1) \\
			& = o(1).
		\end{align*}
	\end{proof}
	
	\begin{proof}{Corollary \ref{corollary:matern_domains}}\\
		Since $H^s\p{\XX} \hookrightarrow H^{s \wedge \alpha}\p{\XX}$ we can assume without loss of generality that $s \leq \alpha$. Following \cite{fischerSobolevNormLearning2020a} Section 4, Assumption \ref{assumption:EVD} holds with $1/p = 1 + 2\alpha/d \iff p = \frac{d}{2\alpha + d}$ and Assumption \ref{assumption:SRC} holds with $\beta := \frac{2s}{2\alpha + d}$. Moreover since $p_0$ is upper and lower bounded by positive constants, we have $L^2\p{\mu_0} \cong L^2\p{\mu}$ and therefore $\Hbb_{\mu_0}^\tau \cong H^{\tau\p{\alpha + d/2}}$, which by the Sobolev embedding theorem (see \cite{adamsSobolevSpaces2003} Chapter $7$) satisfies $\Hbb^{\tau\p{\alpha + d/2}} \hookrightarrow L^\infty\p{\mu_0}$ because $\tau\p{\alpha + d/2} > d/2 \iff \tau > \frac{d}{2\alpha + d} = p$. To choose $\tau$ we distinguish the cases : if $s > d/2 \iff \beta > p$ we take any $\tau \in \p{p,\beta}$ while if $s \leq d/2 \iff \beta \leq p$ we take any $\tau \in \p{p,\beta + p}$, so that Assumption \ref{assumption:EMB} is satisfied and therefore the result follows by an application of Theorem \ref{thm:general_thm_GPs}.
	\end{proof}
	
	\begin{proof}{Corollary \ref{corollary:matern_compact_dirichlet_spaces}}\\
		As for the proof of Corollary \ref{corollary:matern_domains}, we will show that Assumption \ref{assumption:EVD} holds with $1/p = 1+ 2 \alpha/d$, Assumption \ref{assumption:SRC} holds with $\beta = \frac{2s}{2\alpha + d}$, Assumption \ref{assumption:EMB} holds for any $\tau \in \p{p,\beta}$ when $s>d/2$ and any $\tau \in \p{p,\beta+p}$ when $s \leq d/2$, and conclude by applying Theorem \ref{thm:general_thm_GPs}. Again, we can assume without loss of generality that $s \leq \alpha$. First, by assumption we have $f_0 \in H^s\p{\XX} = H^{\beta\p{\alpha+d/2}}\p{\XX}$, which by Remark \ref{remark:interpolation_sobolev_spaces} is norm equivalent to $\p{L^2\p{\mu},H^{\alpha+d/2}\p{\XX}}_{\beta,2} \cong \p{L^2\p{\mu_0},\Hbb}_{\beta,2} = \Hbb_{\mu_0}^\beta$ (because $\Hbb \simeq H^{\alpha+d/2}\p{\XX}$ and the density $p_0$ is upper and lower bounded by positive constants). Therefore, Assumption \ref{assumption:SRC} holds.
		
		We now show that $\mu_j \asymp j^{-(1 + 2\alpha/d)}$, i.e. that Assumption \ref{assumption:EVD} holds with $p = \frac{d}{2\alpha+d}$. For this, notice that as in Section 4 of \cite{fischerSobolevNormLearning2020a} the eigenvalue $\mu_j$ is the square of the $j-$th approximation number $a_j\p{I_{\mu_0}}$ of the canonical inection $I_{\mu_0} : \Hbb \mapsto L^2\p{\mu_0}$, i.e.
		\[
		\mu_j = a_j^2\p{I_{\mu_0}}, \quad a_j\p{I_{\mu_0}} := \inf_{\substack{T \in \LL\p{\Hbb, L^2\p{\mu_0}} \\ \rk \p{T} < j}} \norm{I_{\mu_0} - T}.
		\]
		By assumption $\Hbb$ and $H^{\alpha+d/2}\p{\XX}$ are equal as sets and norm equivalent, so the identity map $\iota : \Hbb \mapsto H^{\alpha+d/2}\p{\XX}$ and its inverse $\iota^{-1} : H^{\alpha + d/2}\p{\XX} \to \Hbb$ are bounded as linear maps between Hilbert spaces. Thus for any linear operator $T$ of rank strictly less than $j$ we have
		\begin{align*}
			\norm{I_{\mu_0} - T}_{\LL\p{\Hbb,L^2\p{\mu_0}}} & = \sup_{\norm{f}_\Hbb \leq 1} \norm{f - Tf}_{L^2\p{\mu_0}} \\
			& \asymp \sup_{\norm{f}_{H^{\alpha + d/2}\p{\XX}} \leq 1} \norm{f - Tf}_{L^2\p{\mu_0}},
		\end{align*}
		and using the fact that $\frac{d\mu_0}{d\mu}$ is upper and lower bounded by positive constants, this implies
		\[
		\norm{I_{\mu_0} - T}_{\LL\p{\Hbb,L^2\p{\mu_0}}} \asymp \sup_{\norm{f}_{H^{\alpha + d/2}\p{\XX}} \leq 1} \norm{f - Tf}_{L^2\p{\mu}} = \norm{\tilde I_\mu-T}_{\LL\p{H^{\alpha + d/2}\p{\XX},L^2\p{\mu}}},
		\]
		where $\tilde I_\mu : H^{\alpha+d/2}\p{\XX} \mapsto L^2\p{\mu}$ is the canonical injection. Therefore
		\[
		a_j\p{I_{\mu_0}} = \inf_{\substack{T \in \LL\p{\Hbb, L^2\p{\mu_0}} \\ \rk \p{T} < j}} \norm{I_{\mu_0} - T} \asymp \inf_{\substack{T \in \LL\p{H^{\alpha+d/2}\p{\XX}, L^2\p{\mu}} \\ \rk \p{T} < j}} \norm{\tilde I_\mu - T} = a_j\p{\tilde I_\mu},
		\]
		and it remains to show that $a_j\p{\tilde I_{\mu_0}} \asymp j^{-(1/2+\alpha/d)}$. But the reproducing kernel $\tilde k$ of $H^{\alpha + d/2}\p{\XX}$ is by definition given by
		\[
		\tilde k (x,y) = \sum_{j \geq 1} \p{1 + \lambda_j}^{-(\alpha + d/2)} u_j(x)u_j(y),
		\]
		so that the eigenvalues $\tilde \mu_j$ of the integral operator
		\[
		\tilde T_\mu : \p{\begin{array}{ccc}
				L^2\p{\mu} & \to & L^2\p{\mu} \\
				f & \mapsto & \int \tilde k(\cdot,y) f(y) \mu(dy)
		\end{array}}
		\]
		are actually given by $\tilde \mu_j = \p{1 + \lambda_j}^{-(\alpha + d/2)}$. Using Proposition \ref{proposition:growth_eigenpairs_dirichlet_spaces} this yields $\tilde \mu_j \asymp j^{-(1 + 2\alpha/d)}$, and using again the results of Section 4 in \cite{fischerSobolevNormLearning2020a} (but this time for $H^{\alpha+d/2}, \tilde k$ and $\mu$ instead of $\Hbb, k$ and $\mu_0$) this implies
		\[
		a_j^2\p{I_{\mu_0}} \asymp a_j^2\p{\tilde I_\mu} = \tilde \mu_j \asymp j^{-(1+2\alpha/d)},
		\]
		so that Assumption \ref{assumption:EVD} holds with $p = \frac{d}{2\alpha + d}$.
		
		Finally, since $p_0$ is upper and lower bounded $L^2\p{\mu_0} \cong L^2\p{\mu}$ which in turns implies, using $\Hbb \simeq H^{\alpha + d/2}\p{\XX}$,
		\[
		\Hbb_{\mu_0}^\tau = \p{L^2\p{\mu_0}, \Hbb}_{\tau,2} \cong \p{L^2\p{\mu}, H^{\alpha+d/2}\p{\XX}}_{\tau,2} \cong H^{\tau\p{\alpha+d/2}}\p{\XX},
		\]
		which is continuously embedded in $\CC\p{\XX}$ using Proposition \ref{prop:embeddings}, since $p = \frac{d}{2\alpha + d}$ we have $\tau \p{\alpha + d/2} > d/2$ because $\tau > p$ by assumption, proving that Assumption \ref{assumption:EMB} holds as well. The final result then follows by an application of Theorem \ref{thm:general_thm_GPs}.
	\end{proof}
	
	\subsection{Proofs of Section \ref{section:random_series_priors}}\label{appendix:proof_random_series}
	
	\begin{proof}{Theorem \ref{theorem:3}}\label{appendix:proof_thm3}\\
		The proof relies on standard arguments in order to check the assumptions of Theorem \ref{theorem:1}. We first prove a $\norm{\cdot}_n-$contraction result : first, by Lemma 2.7 in \cite{ghosalFundamentalsNonparametricBayesian2017}  and with $P_f\p{\cdot|x}$ the distribution $\otimes_{i=1}^n\NN\p{f\p{x_i},\sigma^2}$ we have
		\[
		KL\p{P_{f_0},P_f} = \frac{n}{2\sigma^2} \norm{f-f_0}_n^2 \leq n\sigma^{-2}\norm{f-f_0}_\infty^2, \quad V_{2,0}\p{P_{f_0},P_f} = \frac{n}{\sigma^2} \norm{f-f_0}_n^2 \leq n\sigma^{-2}\norm{f-f_0}_\infty^2,
		\]
		so that $\Pi \c{KL\p{P_{f_0},P_f} \vee V_{2,0} \p{P_{f_0},P_f} \leq n\varepsilon^2} \geq \Pi \c{\norm{f-f_0}_\infty \leq \sigma \varepsilon}$. Using Assumption \ref{assumption:4} for any $J \geq 1$ we can find $f_J \in V_J$ such that $\norm{f_0 - f_J}_\infty \leq KJ^{-s/d}$. Given Assumption \ref{assumption:3} on $\Psi$ we then easily get the prior mass lower bound : for any $J_n \geq 1$, if $K J_n^{-s/d} \leq \sigma \varepsilon/2$ we have with $f_n := f_{J_n}$
		\begin{align*}
			\Pi \c{KL\p{P_{f_0},P_f} \vee V_{2,0} \p{P_{f_0},P_f} \leq n\varepsilon^2} & \geq \Pi \c{\norm{f-f_0}_\infty \leq \sigma \varepsilon} \\
			& \geq \Pi \c{J = J_n} \Pi \c{\norm{f-f_0}_\infty \leq \sigma \varepsilon|J = J_n} \\
			& \geq \Pi \c{J = J_n} \Pi \c{\norm{f-f_n}_\infty \leq \sigma \varepsilon/2|J = J_n} \\
			& \geq \Pi \c{J = J_n} \Pi \c{\forall 1 \leq j \leq J_n, \quad \abs{Z_j - \inner{u_j|f_n}_2} \leq \sigma \varepsilon/2J_n|J = J_n} \\
			& = \Pi \c{J = J_n} \prod_{j=1}^{J_n} \int_{\inner{u_j|f_n}_2 - \varepsilon/2J_n}^{\inner{u_j|f_n}_2 + \varepsilon/2J_n} \Psi(x)dx \\
			& \geq \Pi \c{J = J_n} \prod_{j=1}^{J_n} \inf_{\c{\inner{u_j|f_n}_2 - \varepsilon/2J_n,\inner{u_j|f_n}_2 + \varepsilon/2J_n}} \Psi,
		\end{align*}
		and since
		\[
		\sup_{n,j} \abs{\inner{u_j|f_n}_2} \leq \norm{f_n}_2 \leq \norm{f_0}_2 + \sup_n KJ_n^{-s/d} \leq \norm{f_0}_2 + K,
		\]
		since $\Psi$ is positive and continuous we get for some $c>0$
		\[
		\Pi \c{KL\p{P_{f_0},P_f} \vee V_{2,0} \p{P_{f_0},P_f} \leq n\varepsilon^2} \geq \Pi \c{J = J_n} \exp \p{-cJ_n \ln \p{\varepsilon/J_n}}.
		\]
		If the prior on $J$ is deterministic we take $J_n$ as the precise value of the truncation, and if it is random we take $J_n = \floor{\p{n/\ln n}^{\frac{d}{2\beta+d}}} \asymp n\varepsilon_n^2/\ln n$. In both cases (and using Assumption \ref{prior:J} in the random $J$ case) we see that for some possibly large $c>0$,
		\[
		\Pi \c{KL\p{P_{f_0},P_f} \vee V_{2,0} \p{P_{f_0},P_f} \leq nc^2\varepsilon_n^2} \geq \exp \p{-nc^2 \varepsilon_n^2}.
		\]
		By now defining the set
		\[
		\FF_n = \b{\sum_{1 \leq j \leq u J_n} a_j u_j : \abs{a_j} \leq \p{v n\varepsilon_n^2}^{1/c_3}}, u \geq 1, v > 0,
		\]
		we get that
		\[
		\Pi \c{\FF_n^c} \leq \Pi \c{J > c_1 J_n} + \sum_{j \leq c_1 J_n} \Pi \c{\abs{Z_j} > u}.
		\]
		Given our prior on $J$ the first term of the last display is either $0$ or less than a multiple of $e^{-b u n \varepsilon_n^2} $ for some $b>0$, while using Assumption \ref{assumption:3} the second term can be bounded by a multiple of $e^{-c_2 vn \varepsilon_n^2}$. For any fixed values of $u,v$ we have $\norm{u_j}_\infty \leq C_j \sqrt{j} = o\p{j^{s/d + 1/2}}$ and therefore the $\norm{\cdot}_\infty$-metric entropy of $\FF_n$ can be seen to be bounded by a multiple of $J_n \ln n$, which is always upper bounded by a multiple of $n\varepsilon_n^2$ in either the random or deterministic truncation cases. Moreover, taking $u,v$ large enough implies $\Pi \c{\FF_n^c} \leq e^{-5c^2 n \varepsilon_n^2}$. Finally, as recalled in Section \ref{section:intro} the $\norm{\cdot}_n$ semi norm satisfies the testing condition \ref{eqn:testing_condition} in model \ref{model:nonparametric_regression}. Applying the testing approach (see e.g. \cite{ghosalConvergenceRatesPosterior2007} or Theorem 1.3 in \cite{castilloBayesianNonparametricStatistics2024}) then gives the existence of $M>0$ such that
		\[
		\EE_0 \Pi \c{\norm{f-f_0}_n > M \varepsilon_n | \Xbb^n} \to 0.
		\]
		Moreover an application Lemma 1.2 in \cite{castilloBayesianNonparametricStatistics2024} together with our previous estimates implies that
		\[
		\EE_0 \Pi \c{\Ecal_{uJ_n}\p{f} = 0 |\Xbb^n} \geq \EE_0\Pi \c{\FF_n|\Xbb_n} \to 1,
		\]
		while the estimate $\Ecal_{J_n}\p{f_0} \leq M_1 \varepsilon_n$ follows by Assumption \ref{assumption:4}.
		Since in both cases
		\[
		n \geq c_0 C_{J_n}^2 J_n \ln J_n \norm{\Sigma}^{-1} \p{p_+/p_-}^2
		\]
		for large $n$ (because $C_j = o \p{j^{s/d}}$ by assumption), we get the desired result by applying Theorem \ref{theorem:2}.
	\end{proof}
	
	\begin{proof}{Theorem \ref{theorem:L2_norm_castillo}}\\
		The proof is similar to the one of Theorem \ref{theorem:3} : as a consequence of the results in
		in \cite{castilloThomasBayesWalk2014} (more precisely the prior mass lower bound obtained in Section 5 \textit{"General conditions for posterior rates"}), we have for some $M'>0$,
		\[
		\EE_0 \Pi \c{\norm{f-f_0}_n > M' \varepsilon_n | \Xbb^n} \to 0,
		\]
		as well as
		\begin{equation}\label{eqn:prior_mass}
			\Pi \c{\norm{f-f_0}_\infty \leq M'\varepsilon_n} \geq \exp \p{-n M'^2\varepsilon_n^2},
		\end{equation}
		On the other hand, with $P_f\p{\cdot|x}$ the distribution $\otimes_{i=1}^n\NN\p{f\p{x_i},\sigma^2}$ we have
		\[
		KL\p{P_{f_0},P_f} = \frac{n}{2\sigma^2} \norm{f-f_0}_n^2 \leq n\sigma^{-2}\norm{f-f_0}_\infty^2, \quad V_{2,0}\p{P_{f_0},P_f} = \frac{n}{\sigma^2} \norm{f-f_0}_n^2 \leq n\sigma^{-2}\norm{f-f_0}_\infty^2,
		\]
		so that $\Pi \c{KL\p{P_{f_0},P_f} \vee V_{2,0} \p{P_{f_0},P_f} \leq nM'^2\varepsilon_n^2} \geq \exp \p{-nM'^2\varepsilon_n^2}$, perhaps for a larger $M'>0$.
		
		We let $J_n \sim \varepsilon_n^{-d/s}$ so that points $1)$ \& $2)$ of Theorem \ref{theorem:2} are satisfied. It only remains show point $3)$ :
		\[
		\EE_0 \Pi \c{\Ecal_{J_n}\p{f} \geq \varepsilon_n | \Xbb^n} \to 0.
		\]
		We do so by applying Lemma 1.2 from \cite{castilloBayesianNonparametricStatistics2024}. Recall that \textit{a priori}, $f|t$ has the distribution of the series
		\[
		\sum_{j \geq 1} e^{-t\lambda_j/2} Z_ju_j, \quad Z_j \iid \NN\p{0,1},
		\]
		and that $t \sim \pi_1$. Therefore, for any $t_n$ and  $\varepsilon>0$,
		\begin{align*}
			\Pi \c{\Ecal_{J_n}\p{f} > \varepsilon} & = \int_0^\infty \Pi \c{\Ecal_{J_n}\p{f} > \varepsilon|t} \pi_1(t)dt \\
			& \leq \pi_1\p{t \leq t_n} + \sup_{t > t_n} \Pi \c{\Ecal_{J_n}\p{f} > \varepsilon|t}.
		\end{align*}
		Moreover, writing $f|t$ as the random series $\sum_{j\geq 1} e^{-t\lambda_j/2} Z_j u_j$ where $Z_j \iid \NN\p{0,1}$, for any $t>t_n$, thanks to the exponential factor we can easily derive an upper bound on $\Ecal_{J_n}\p{f}$ :
		\begin{align*}
			\Ecal_{J_n}\p{f} & \leq \norm{\sum_{j > J_n} e^{-t\lambda_j/2} Z_j u_j}_\infty \\
			& \leq \sum_{j > J_n} e^{-t\lambda_j/2} \abs{Z_j} \norm{u_j}_\infty.
		\end{align*}
		Using Proposition \ref{proposition:growth_eigenpairs_dirichlet_spaces} which as we remind the reader also imply the bound $\norm{u_j}_\infty \leq C \sqrt{j}$, we get
		\[
		\Ecal_{J_n}\p{f} \leq C\sum_{j > J_n} e^{-t\lambda_j/2} \sqrt{j} \abs{Z_j}.
		\]
		Moreover, using again Proposition \ref{proposition:growth_eigenpairs_dirichlet_spaces} above we have $\lambda_j \asymp j^{2/d}$, and therefore for a fixed $c>0$ we have
		\begin{align*}
			\Ecal_{J_n}\p{f} & \leq C\sum_{j > J_n} e^{-2ctj^{2/d}} \sqrt{j} \abs{Z_j} \\
			& \leq C e^{-ctJ_n^{2/d}} \sum_{j > J_n} e^{-ctj^{2/d}} \sqrt{j} \abs{Z_j},
		\end{align*}
		which implies for any $u > 0$ and $t > t_n$,
		\begin{align*}
			\Pi \c{\Ecal_{J_n}\p{f} > \varepsilon | t} & \leq \Pi \c{C e^{-ctJ_n^{2/d}} \sum_{j > J_n} e^{-ctj^{2/d}} \sqrt{j} \abs{Z_j} > \varepsilon | t} \\
			& = \Pi \c{ \sum_{j > J_n} e^{-ctj^{2/d}} \sqrt{j} \abs{Z_j} > C^{-1} e^{ctJ_n^{2/d}}  \varepsilon } \\
			& \leq \Pi \c{ \sum_{j > J_n} e^{-ct_n j^{2/d}} \sqrt{j} \abs{Z_j} > C^{-1} e^{ct_nJ_n^{2/d}}  \varepsilon } \\
			& \leq \exp \p{-e^{ct_nJ_n^{2/d}}u \varepsilon/C} \Pi \c{ \exp \p{u \sum_{j > J_n} e^{-ct_n j^{2/d}} \sqrt{j} \abs{Z_j} }}.
		\end{align*}
		By independence and a simple monotone convergence argument we find
		\begin{align*}
			\Pi \c{\Ecal_{J_n}\p{f} > \varepsilon | t} & \leq \exp \p{-e^{ct_nJ_n^{2/d} }u \varepsilon/C} \Pi_{j > J_n} \Pi \c{ \exp \p{u e^{-ct_n j^{2/d}} \sqrt{j} \abs{Z} }} \\
			& \leq \exp \p{-e^{ct_nJ_n^{2/d} }u \varepsilon/C} \Pi_{j > J_n} \exp \p{ u^2 e^{-2ct_nj^{2/d}}j/2} \\
			& = \exp \p{-e^{ct_nJ_n^{2/d} }u \varepsilon/C} \exp \p{ u^2 \sum_{j > J_n} e^{-2ct_nj^{2/d}}j/2},
		\end{align*}
		and by a series integral comparison,
		\begin{align*}
			\sum_{j > J_n} je^{-2ct_nj^{2/d}} & = t_n^{-d/2}\sum_{j > J_n} \p{t_n j^{2/d}}^{d/2} e^{-2ct_nj^{2/d}} \\
			& \leq t_n^{-d/2} \underbrace{\p{\sup_{x > 0} x^{d/2} e^{-x}}}_{=:c_d} \sum_{j > J_n} e^{-ct_nj^{2/d}} \\
			& \leq c_d t_n^{-d/2} \int_{J_n}^\infty e^{-ct_ns^{2/d}} ds \\
			& = c_d' t_n^{-d} \int_{t_n J_n^{2/d}}^\infty e^{-cx} x^{d/2-1} dx \\
			& \leq c_d'' t_n^{-d} \int_{t_n J_n^{2/d}} e^{-cx/2}dx \\
			& = c_d'' t_n^{-d} e^{-ct_nJ_n^{2/d}/2}.
		\end{align*}
		Therefore, for some $c_d>0$
		\[
		\sup_{t>t_n} \Pi \c{\Ecal_{J_n}\p{f} > \varepsilon | t} \leq \exp \p{-e^{ct_nJ_n^{2/d} }u \varepsilon/C} \exp \p{c_d u^2 t_n^{-d} e^{-ct_nJ_n^{2/d}/2}}
		\]
		which, taking $u = \p{e^{ct_nJ_n^{2/d} }\varepsilon/C}/\p{2c_d u^2 t_n^{-d} e^{-ct_nJ_n^{2/d}/2}}$ implies
		\begin{align*}
			\sup_{t > t_n} \Pi \c{\Ecal_{J_n}\p{f} > \varepsilon | t} & \leq \exp \p{-\p{e^{ct_nJ_n^{2/d} }\varepsilon/C}^2/\p{4c_d t_n^{-d} e^{-ct_nJ_n^{2/d}/2}}} \\
			& = \exp \p{-t_n^d e^{5ct_nJ_n^{2/d}/2 }\varepsilon^2 / 4c_dC^2}.
		\end{align*}
		On the other hand, given Assumption \ref{eqn:prior_t_castillo} on $\pi_1$ we have, as $t_n \to 0$,
		\[
		\pi_1\p{t \leq t_n} = \int_0^{t_n} \pi_1(t)dt \leq t_n \sup_{0 < t \leq t_n} \pi_1(t) \lesssim e^{-t_n^{-d/2} \ln^{1+d/2}\p{1/t_n}/2},
		\]
		which implies
		\begin{equation}\label{eqn:prior_mass_tails}
			\Pi \c{\Ecal_J\p{f}>\varepsilon} \lesssim \exp \p{-t_n^{-d/2} \ln^{1+d/2}\p{1/t_n}/2} + \exp \p{-t_n^d e^{5ct_nJ_n^{2/d}/2 }\varepsilon^2 / 4c_dC^2}.
		\end{equation}

		Thus our estimates \ref{eqn:prior_mass_tails} \& \ref{eqn:prior_mass} show that for any $K>0$, with $t_n$ a small enough multiple of $\p{n \varepsilon_n^2 / \ln^{1+d/2}n}^{-2/d}$ and $J_n$ a large enough multiple of $\p{t_n^{-1} \ln n}^{d/2} \sim \p{\ln n}^{-4/d^2} n \varepsilon_n^2$ we have
		\[
		\Pi \c{\Ecal_{J_n} > M\varepsilon_n} \leq e^{-K n \varepsilon_n^2},
		\]
		and therefore Lemma 1.2 from \cite{castilloBayesianNonparametricStatistics2024} implies that for $K > 0$ large enough
		\[
		\EE_0 \Pi \c{\Ecal_{J_n} > M\varepsilon_n} \to 0,
		\]
		with $J_n$ a constant multiple of $n^{\frac{d}{2s+d}} \p{\ln n}^{\frac{2s}{2s+d} - \frac{4}{d^2}}$. In particular since $n \geq c_0 C_{J_n}^2 J_n \ln J_n \norm{\Sigma}^{-1} \p{p_+/p_-}^2$ for large $n$ we can apply Theorem \ref{theorem:2}, which concludes the proof.
	\end{proof}
	
	\begin{proof}{Theorem \ref{theorem:rggs_d_N}}\\
		By Theorem \ref{theorem:rggs_d_n} there exists $M>0$ such that
		\[
		\EE_0 \Pi \c{\norm{f-f_0}_n > M\varepsilon_n | \Xbb^n} \to 0,
		\]
		where $\varepsilon_n$ is given in \ref{theorem:rggs_d_N}. Moreover, it is shown in the proof of Theorems $3.1 \& 3.4$ in \cite{rosaNonparametricRegressionRandom2024a} that for any $H_1>0$ there exists $M_1 > 0$ such that
		\[
		\Pro_0 \p{\Pi \c{\norm{f-f_0}_{L^\infty\p{\nu}} < M_1 \varepsilon_n} < e^{-nM_1^2 \varepsilon_n^2}} \leq n^{-H_1},
		\]
		the precise value of $H_1>0$ not really being important here as long as it is positive. For the prior with randomised $J$ considered here, for any $J_n > j_0$ we have
		\[
		\Pi \c{J > J_n} \leq \sum_{j > J_n} e^{-a_2 jL_j} \lesssim e^{-a_2 J_n \ln J_n},
		\]
		so that choosing $J_n$ a large enough multiple of $n\varepsilon_n^2/\ln n$ yields
		\[
		\EE_0 \Pi \c{J \leq J_n | \Xbb^n} \to 1,
		\]
		by a similar remaining mass type argument as the one used in the proof of Theorem \ref{theorem:2}, only on a event of probability at least $1 - n^{-H_1}$. For the prior with fixed $J$ we trivially have $\Pi \c{J \leq J_n | \Xbb^n} = 1$ $\Pro_0-$almost surely because $\Pi \c{J = J_n} = 1$. Now, for any $1 \leq J \leq J_n$, on the associated graph with bandwidth $h = h_J$ we have, for any $H_2>0$, for some $M_2>0$,
		\[
		\Pro_0 \p{\Ecal_{J_n}\p{f_0} > M_2 \varepsilon_n} \leq n^{-H_2},
		\]
		where $\Ecal_{J_n}\p{f_0} = \inf_{f \in V_{J_n}} \norm{f-f_0}_{L^\infty\p{\nu}}, \quad V_{J_n} = \Span \b{u_j : 1\leq j \leq J_n}$. We remind the reader that even though the eigenpairs $\p{\lambda_j,u_j}_{j=1}^N$ depend on $h$ (and therefore the function $f_{0n} : \XX_N \to \RR$ depends on $h$ as well), we keep the dependence implicit for ease of notation. In the case of a prior with fixed $J$ (and therefore fixed $h$) this causes no trouble, but in the case of a randomised $J$ we need to show a result uniform in $h \in \HH_n$ in order for the proof to remain valid, which we do by applying a suitable union bound. Specifically, we find that $\Ecal_{J_n}\p{f_0} \leq M_2 \varepsilon_n$ simultaneously for any $h \in \HH_{J_n}$ with a probability at least $1 - J_n n^{-H_2}$, which is $1 - o(1)$ for large $H_2$. Using the triangle inequality, with probability at least $1-o(1)$, for any $h \in \HH_n := \b{h_J : 1 \leq J \leq J_n}$ there exists $f_{0n} \in V_{J_n}$ such that $\norm{f_0 - f_{0n}}_{L^\infty\p{\nu}} \leq M_2 \varepsilon_n$, which in turn implies $\norm{f-f_{0n}}_n \leq M_2 \varepsilon_n + \norm{f-f_0}_n$ for every $f$.
		
		As a consequence, by the triangle inequality
		\[
		\EE_0 \Pi \c{\norm{f-f_{0n}}_n > \p{M_2 + M} \varepsilon_n | \Xbb^n} \leq \EE_0 \Pi \c{\norm{f-f_0}_n > M \varepsilon_n | \Xbb^n} \to 0,
		\]
		and therefore, for $M_3,M_4>0$,
		\begin{align*}
			\EE_0 \Pi \c{\norm{f-f_0}_N > M_3 \varepsilon_n | \Xbb^n} & \leq \EE_0 \Pi \c{\norm{f-f_{0n}}_N > \p{M_3-M_2} \varepsilon_n | \Xbb^n} \\
			& \leq  \EE_0 \ind{\forall h \in \HH_n, \sup_{f \in V_{J_n}} \frac{\norm{f}_N}{\norm{f}_n} \leq M_4} \Pi \c{\norm{f-f_{0n}}_N > \p{M_3-M_2} \varepsilon_n | \Xbb^n} \\
			& + \Pro_0 \c{ \exists h \in \HH_n, \quad \sup_{f \in V_{J_n}} \frac{\norm{f}_N}{\norm{f}_n} > M_4 } \\
			& \leq  \EE_0 \Pi \c{\norm{f-f_{0n}}_n > \frac{\p{M_3-M_2}}{M_4} \varepsilon_n | \Xbb^n} + J_n \max_{h \in \HH_n}\Pro_0 \c{\sup_{f \in V_{J_n}} \frac{\norm{f}_N}{\norm{f}_n} > M_4 }.
		\end{align*}
		We will now find $M_4$ to make the second term of the last display vanish and we will choose $M_3 > M_2 + M_4\p{M_2 + M}$ to get
		\[
		\EE_0 \Pi \c{\norm{f-f_0}_N > M_3 \varepsilon_n | \Xbb^n} \leq \EE_0 \Pi \c{\norm{f-f_0}_n > M \varepsilon_n | \Xbb^n} + o(1) = o(1).
		\]
		To do so we adopt a symmetrisation argument : since the $X_i's$ are \iidt, they are exchangeable and therefore for any $\sigma \in \Scal_N$ (permutation of $\b{1,\ldots,N}$) we have
		\[
		\Pro_0 \c{\sup_{f \in V_{J_n}} \frac{\norm{f}_N}{\norm{f}_n} > M_4} = \Pro_0 \c{\sup_{f \in V_{J_n}} \frac{\norm{f}_N}{\norm{f}_{n,\sigma}} > M_4},
		\]
		where
		\[
		\norm{f-f_0}_{n,\sigma}^2 := \frac{1}{n} \sum_{i=1}^n \p{f(x_{\sigma(i)}) - f_0(x_{\sigma(i)})}^2.
		\]
		Here the specific choice of $h \in \HH_n$ is implicit. As a consequence
		\[
		\frac{1}{N!} \sum_{\sigma \in \Scal_N} \Pro_0 \c{\sup_{f \in V_{J_n}} \frac{\norm{f}_N}{\norm{f}_{n,\sigma}} > M_4} = \Pro_0 \c{\sup_{f \in V_{J_n}} \frac{\norm{f}_N}{\norm{f}_n} > M_4},
		\]
		i.e. as $X_i \iid \mu_0$ and $\sigma$ is a uniformly sampled permutation of $\b{1,\ldots,N}$ we have
		\[
		\Pro_0 \c{\sup_{f \in V_{J_n}} \frac{\norm{f}_N}{\norm{f}_n} > M_4} =\Pro_{0,\sigma} \c{\sup_{f \in V_{J_n}} \frac{\norm{f}_N}{\norm{f}_{n,\sigma}} > M_4},
		\]
		where $\Pro_{0,\sigma}$ is the induced probability distribution on $\p{X_1,\ldots,X_N,\sigma}$.
		
		For every $f = \sum_{j=1}^{J_n} a_j u_j \in V_{J_n}$ we have
		\[
		\norm{f}_N^2 = \frac{1}{N} \sum_{i=1}^N f^2(x_i) = a^T \Sigma_n a, \quad \p{\Sigma_n}_{jk} := \frac{1}{N} \sum_{i=1}^N u_j(x_i) u_k(x_i),
		\]
		and
		\[
		\norm{f}_{n,\sigma}^2 = \frac{1}{n} \sum_{i=1}^n f^2(x_{\sigma(i)}) = a^T \Sigma_{n,\sigma} a, \quad \p{\Sigma_{n,\sigma}}_{jk} := \frac{1}{n} \sum_{i=1}^n u_j(x_{\sigma(i)}) u_k(x_{\sigma(i)}).
		\]
		We will conclude by applying Lemma \ref{lemma:concentration_without_replacement}. Notice that $\HH_n \subset \c{h_-,h_+}$ with $h_- = \frac{J_n^{-1/d}}{\ln^{\tau/d} n}$ and $h_+ = \frac{1}{\ln^{\tau/d} n}$, and $h_-$ satisfies $\frac{Nh_-^d}{\ln N} \gtrsim \frac{N/J_n}{\ln^{1+\tau} n} \geq \frac{n/J_n}{\ln^{1+\tau} n} \to \infty$ since in any case $J_n \ll \frac{n}{\ln^{1+\tau}n}$, so that Lemma B.12 in \cite{rosaNonparametricRegressionRandom2024a} yields $\Pro_0\p{A} \to 1$, where $A$ is the event defined by
		\[
		A := \b{\forall b_0 h^2 \ln n 
			\leq t \leq \frac{b_1}{\ln^2 n}, \quad \max_{x \in \XX_N} p_t(x,x) \leq b_2 t^{-d/2}, \quad \forall b_0 \leq J \leq b_1 \frac{h^{-d}}{\ln^{2d} N}, \quad \lambda_J \leq b_2 J^{2/d} \ln^3 n}
		\]
		for some $b_0,b_1,b_2>0$ and where
		\[
		p_t(x,x) := \sum_{j=1}^N e^{-t\lambda_j} u_j^2(x).
		\]
		In particular, for any $x \in \XX_N$ and $b_0 \leq J \leq b_1 \frac{h^{-d}}{\ln^{2d} N}$, with $\Lambda = b_2 J^{2/d} \ln^3 n$,
		\[
		Q_J(x,x) := \sum_{j=1}^J u_j^2(x)= \sum_{j=1}^N \ind{j \leq J} u_j^2(x) \leq \sum_{j=1}^N \ind{\lambda_j \leq \Lambda} u_j^2(x) \leq e p_{\Lambda^{-1}}(x,x),
		\]
		which then implies, provided that $b_0 h^2 \ln n \leq \Lambda^{-1} \leq \frac{b_1}{\ln^2 n}$,
		\[
		Q_J(x,x) \leq eb_2 \Lambda^{d/2} = eb_2^{1+d/2} J \ln^{3d/2} n.
		\]
		This is valid when $\p{b_1b_2}^{-d/2} \ln^{-d/2} n \leq J = \frac{h^{-d}}{\ln^{\tau} n} \leq \p{b_0 b_2}^{-d/2} \frac{h^{-d}}{\ln^{5d/2} n}$, which holds for large $n$ since $\tau > 5d/2$ and $\kappa > -d/2$.
		
		Applying Lemma \ref{lemma:concentration_without_replacement} we find, with $C^2 = eb_2^{1+d/2} \ln^{3d/2} n$ and for every $h \in \HH_n$ and $t>0$,
		\begin{align*}
			\Pro_{0,\sigma}\c{\norm{\Sigma_n - \Sigma_N} > t} & \leq 2N \exp \p{-\frac{1}{8} \min \p{\p{\frac{nt}{C^2J}}^2 \wedge \frac{nt}{C^2J}}} \\
			& \leq 2N \exp \p{-\frac{1}{8} \min \p{\p{\frac{nt}{C^2J_n}}^2 \wedge \frac{nt}{C^2J_n}}}.
		\end{align*}
		In particular we have $J_n \max_{h \in \HH_n}\Pro_0 \c{\norm{\Sigma_n - \Sigma_N} > t } \to 0$ whenever $J_n \ll nt/C^2 \ln n \asymp nt/\ln^{1+3d/2} n$, which is satisfied by definition of $J_n$. We also make the observation that with high $\Pro_{0,\sigma}-$probability, the matrix $\Sigma_N$ is well conditionned : indeed, for any $a \in \RR^J$, with $f = \sum_{j=1}^J a_j u_j$ we have
		\begin{align*}
			\p{\min_{x \in \XX_N} \nu_x/N} \norm{a}^2 = \p{\min_{x \in \XX_N} \nu_x/N} \norm{f}_{L^2\p{\nu}}^2 & \leq a^T \Sigma_N a = \norm{f}_N^2 = \frac{1}{N} \sum_{i=1}^N f^2(x_i) \\
			& \leq \p{\max_{x \in \XX_N} \nu_x/N} \norm{f}_{L^2\p{\nu}}^2 = \p{\max_{x \in \XX_N} \nu_x/N} \norm{a}^2,
		\end{align*}
		and moreover using Theorem B.2 in \cite{rosaNonparametricRegressionRandom2024a} we find
		\[
		\Pro_0\p{a_0 \leq \min_{x \in \XX_N} \nu_x/N \leq \max_{x \in \XX_N} \nu_x/N \leq a_1} \geq 1 - \exp \p{-cNh_-^d},
		\]
		for some $c,a_0,a_1 > 0$. As a consequence we have $\Pro_0\p{a_0 \leq \lambda_{\min}\p{\Sigma_N} \leq \lambda_{\max}\p{\Sigma_N} \leq a_1} \geq 1 - \exp \p{-cNh_-^d}$. Now if $\norm{\Sigma_n - \Sigma_N} \leq t$ we have, for any $f = \sum_{j=1}^J a_j u_j \in V_J$,
		\[
		\norm{f}_N^2 = a^T \Sigma_N a = a^T \p{\Sigma_N - \Sigma_n} a + \norm{f}_n^2 \leq \norm{\Sigma_N - \Sigma_n} \norm{a}^2 + \norm{f}_n^2 \leq \p{t \norm{\Sigma_n^{-1}} + 1} \norm{f}_n^2.
		\]
		To control $\norm{\Sigma_n^{-1}}$ we observe that, on the event where $a_0 \leq \lambda_{\min}\p{\Sigma_N} \leq \lambda_{\max}\p{\Sigma_N} \leq a_1$,
		\[
		\Sigma_n = \Sigma_N + \Sigma_n - \Sigma_N = \Sigma_N \p{I_J + \Sigma_N^{-1}\p{\Sigma_n - \Sigma_N}},
		\]
		so that, as long as $\norm{\Sigma_N^{-1}\p{\Sigma_n - \Sigma_N}} < 1$ (which is satisfied if $t < a_0$, since in that case $\norm{\Sigma_N^{-1}\p{\Sigma_n - \Sigma_N}} \leq \norm{\Sigma_N^{-1}}\norm{\p{\Sigma_n - \Sigma_N}} \leq t/a_0 < 1$),
		\begin{align*}
			\norm{\Sigma_n^{-1}} & = \norm{\p{I_J + \Sigma_N^{-1}\p{\Sigma_n - \Sigma_N}}^{-1} \Sigma_N^{-1}} \\
			& = \norm{\sum_{l \geq 0} \p{-1}^l \p{\Sigma_N^{-1}\p{\Sigma_n - \Sigma_N}}^l \Sigma_N^{-1}} \\
			& \leq \sum_{l \geq 0} \norm{\p{\Sigma_N^{-1}\p{\Sigma_n - \Sigma_N}}^l \Sigma_N^{-1}} \\
			& \leq \sum_{l \geq 0} \norm{\p{\Sigma_N^{-1}\p{\Sigma_n - \Sigma_N}}^l \Sigma_N^{-1}} \\
			& \leq \sum_{l \geq 0} \norm{\p{\Sigma_N^{-1}\p{\Sigma_n - \Sigma_N}}^l} \norm{\Sigma_N^{-1}} \\
			& \leq \sum_{l \geq 0} \norm{\Sigma_N^{-1}\p{\Sigma_n - \Sigma_N}}^l \norm{\Sigma_N^{-1}} \\
			& \leq \sum_{l \geq 0} \norm{\Sigma_N^{-1}}^l \norm{\Sigma_n - \Sigma_N}^l \norm{\Sigma_N^{-1}} \\
			& \leq \sum_{l \geq 0} \p{t/a_0}^{-l} a_1 \\
			& = \frac{a_1}{1 - t/a_0}.
		\end{align*}
		Thus, choosing $t = \frac{1}{a_1 + a_0^{-1}}$ yields, on the event where $a_0 \leq \lambda_{\min}\p{\Sigma_N} \leq \lambda_{\max}\p{\Sigma_N} \leq a_1$, for every $f \in V_J$,
		\[
		\norm{f}_N^2 \leq 2\norm{f}_n^2.
		\]
		Therefore, choosing $M_4 = 2$ we get
		\begin{align*}
			& J_n \max_{h \in \HH_n}\Pro_0 \c{\sup_{f \in V_{J_n}} \frac{\norm{f}_N}{\norm{f}_n} > M_4 } \\
			& \leq J_n \Pro_{0,\sigma}\p{a_0 > \lambda_{\min}\p{\Sigma_N} \text{ or } \lambda_{\max}\p{\Sigma_N} > a_1} + J_n \Pro_{0,\sigma}\p{\norm{\Sigma_n - \Sigma_N}>t} \\
			& \leq J_n \exp \p{-cNh_-^d} + 2N^2 \exp \p{- \frac{1}{8} \min \p{\p{\frac{nt}{C^2J_n}}^2 \wedge \frac{nt}{C^2J_n}}} \\
			& = o(1),
		\end{align*}
		which concludes the proof.
	\end{proof}
	
	\begin{lemma}\label{lemma:concentration_without_replacement}
		Let $\YY_n$ be a set of cardinality $N \geq 1$, $\p{u_j}_{j=1}^N$ be a family of functions over $\YY_N$ such that,  for some $J_0,J_1,C>0$, for every $J_0 \leq J \leq J_1$ and $y \in \YY_N$,
		\[
		Q_J\p{y,y} := \frac{1}{J} \sum_{j=1}^J u_j(y)^2 \leq C^2.
		\]
		Then if $n \leq N$ and $\p{X_i}_{i=1}^n$ are sampled uniformly without replacement from $\YY_N$, $J_0 \leq J \leq J_1$ and $\Sigma_n, \Sigma_N \in \RR^{J \times J}$ are defined by
		\[
		\p{\Sigma_n}_{jk} := \frac{1}{n} \sum_{i=1}^n u_j(x_i) u_k(x_i), \quad \p{\Sigma_N}_{jk} := \frac{1}{N} \sum_{y \in \YY} u_j(y) u_k(y),
		\]
		we have
		\[
		\Pro \c{\norm{\Sigma_n - \Sigma_N} > t} \leq 2N \exp \p{- \frac{1}{4} \min \p{ \p{\frac{nt}{C^2 J}}^2 \wedge \frac{nt}{C^2J}}}.
		\]
	\end{lemma}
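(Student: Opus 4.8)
The plan is to identify $\Sigma_n-\Sigma_N$ as a normalised sum of random rank-one matrices drawn from a finite collection \emph{without} replacement, and to invoke the matrix Bernstein inequality for sampling without replacement of \cite{grossNoteSamplingReplacing2010}. Writing $u(y):=\c{u_1(y),\ldots,u_J(y)}^T\in\RR^J$ and $M_y:=u(y)u(y)^T$ for $y\in\YY_N$, one has $\Sigma_N=\frac1N\sum_{y\in\YY_N}M_y$ and $\Sigma_n=\frac1n\sum_{i=1}^n M_{X_i}$, hence
\[
\Sigma_n-\Sigma_N=\frac1n\sum_{i=1}^n\bar M_{X_i},\qquad\bar M_y:=M_y-\Sigma_N,
\]
where the $n$ matrices $\bar M_{X_i}$ form a uniform-without-replacement sample from the $N$ centred matrices $\b{\bar M_y:y\in\YY_N}$, whose average is $0$. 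The Gross--Nesme majorisation guarantees that any matrix-Laplace-transform bound valid for the corresponding with-replacement sum is still valid here, so I would reduce to the usual i.i.d.\ matrix Bernstein inequality (as in Chapter~5 of \cite{Eldar_Kutyniok_2012}); it then remains only to supply its two inputs: an almost-sure bound $R$ with $\norm{\bar M_y}\le R$ for all $y$, and a variance proxy $\sigma^2$ with $\norm{\frac1N\sum_{y\in\YY_N}\bar M_y^2}\le\sigma^2$.

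First I would extract these two parameters from the hypothesis on $Q_J$. Since $\norm{u(y)}_{\RR^J}^2=\sum_{j=1}^J u_j(y)^2=J\,Q_J(y,y)\le C^2J$, we get $0\preceq M_y\preceq C^2J\,I_J$ for every $y$, hence $0\preceq\Sigma_N\preceq C^2J\,I_J$ by averaging, and therefore $-C^2J\,I_J\preceq\bar M_y\preceq C^2J\,I_J$; one may take $R=C^2J$, and the same estimate also forces $\norm{\Sigma_n-\Sigma_N}\le C^2J$ deterministically. For the variance proxy, the rank-one identity $M_y^2=\norm{u(y)}_{\RR^J}^2\,M_y\preceq C^2J\,M_y$ gives
\[
\frac1N\sum_{y\in\YY_N}\bar M_y^2=\frac1N\sum_{y\in\YY_N}M_y^2-\Sigma_N^2\preceq\frac1N\sum_{y\in\YY_N}M_y^2\preceq C^2J\,\Sigma_N\preceq\p{C^2J}^2 I_J,
\]
so one may take $\sigma^2=\p{C^2J}^2$.

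Feeding $R=C^2J$ and $\sigma^2=\p{C^2J}^2$ into the matrix Bernstein inequality applied to the sum $\sum_{i=1}^n\bar M_{X_i}$ at the level $nt$, and recording that the ambient dimension is $J\le N$, yields a bound of Bernstein type
\[
\Pro\c{\norm{\Sigma_n-\Sigma_N}>t}\ \le\ 2N\exp\p{-c\,\frac{nt^2}{\p{C^2J}^2+C^2Jt}}
\]
for a universal $c>0$; splitting into the sub-Gaussian range $t\lesssim C^2J$, where the denominator is comparable to $\p{C^2J}^2$ and the exponent is of order $nt^2/\p{C^2J}^2$, and the complementary sub-exponential range, where it is of order $nt/C^2J$, and tidying up constants reduces this to the stated estimate (with the deterministic bound $\norm{\Sigma_n-\Sigma_N}\le C^2J$ showing that nothing is lost at the cross-over). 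The only genuinely delicate point is that the design $\p{X_i}_{i=1}^n$ is sampled without replacement, so the i.i.d.\ matrix Bernstein inequality used in Theorem~\ref{theorem:1} does not apply directly; this is precisely what the Gross--Nesme reduction repairs, after which the rank-one linear algebra and the Bernstein bookkeeping above are routine.
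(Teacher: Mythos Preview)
Your proposal is correct and follows essentially the same approach as the paper: represent $\Sigma_n-\Sigma_N$ as a normalised sum of centred rank-one matrices $u(X_i)u(X_i)^T-\Sigma_N$ sampled without replacement, and apply the matrix Bernstein inequality of \cite{grossNoteSamplingReplacing2010} after bounding $\norm{u(y)}_{\RR^J}^2\le C^2J$. Your variance bound via the rank-one identity $M_y^2=\norm{u(y)}_{\RR^J}^2 M_y$ is in fact slightly tidier than the paper's triangle-inequality computation, but the argument is otherwise the same.
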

	\begin{proof}
		The proof is similar to the one of Theorem \ref{theorem:1}, adapted to the setting of sampling without replacement. We have
		\[
		\Sigma_n = \frac{1}{n} \sum_{i=1}^n K(x_i), \quad \Sigma_N = \frac{1}{N} \sum_{y \in \YY_N} K(y),
		\]
		where, for any $y \in \YY_N$,
		\[
		K(y) := uu^T(y), \quad u(y) = \c{u_1(y),\ldots,u_J(y)}^T.
		\]
		In this context, Theorem 1 in \cite{grossNoteSamplingReplacing2010} applied to the summands $\p{\p{K(x_i) - \Sigma_N}/n}_{i=1}^n$ and
		\[
		S = \sum_{i=1}^n \frac{K(x_i) - \Sigma_N}{n} = \Sigma_n - \Sigma_N
		\]
		implies that
		\[
		\Pro \c{\norm{\Sigma_n - \Sigma_N} > t} = \Pro \c{\norm{S} > t} \leq 2N \exp \p{- \min \p{\frac{t^2}{4V} \wedge \frac{t}{2c}}},
		\]
		where
		\[
		c = \max_{y \in \YY_N} \norm{\frac{K(y) - \Sigma_N}{n}} \leq \frac{2}{n} \max_{y \in \YY_N} \norm{K(y)} = \frac{2}{n} \max_{y \in \YY_N} \norm{u(y)}_{\RR^J}^2 \leq \frac{C^2 J}{n},
		\]
		and
		\begin{align*}
			V & = \norm{\frac{1}{N} \sum_{y \in \YY_N} \p{\frac{K(y) - \Sigma_N}{n}}^2} \\
			& = \norm{\frac{1}{N} \sum_{y \in \YY_N} \frac{K(y)^2 + \Sigma_N^2 - K(y) \Sigma_N - \Sigma_N K(y)}{n^2}} \\
			& = \norm{\frac{1}{Nn^2} \sum_{y \in \YY_N} K(y)^2 - \Sigma_N^2} \\
			& \leq \frac{1}{n^2} \p{\norm{\Sigma_N^2} + \max_{y \in \YY_N} \norm{K(y)^2}} \\
			& \leq \frac{1}{n^2} \p{\norm{\Sigma_N}^2 + \max_{y \in \YY_N} \norm{K(y)}^2} \\
			& = \frac{1}{n^2} \p{\norm{\Sigma_N}^2 + \max_{y \in \YY_N} \norm{u(y)}_{\RR^J}^4}.
		\end{align*}
		Moreover $\norm{u(y)}_{\RR^J}^2 \leq C^2 J$ and
		\[
		\norm{\Sigma_N} = \norm{\frac{1}{N} \sum_{y \in \YY_N} K(y)} \leq \max_{y \in \YY_N} \norm{K(y)} = \max_{y \in \YY_N} \norm{u(y)}_{\RR^J}^2 \leq C^2J,
		\]
		which yields $V \leq C^4J^2/n^2$ and therefore
		\begin{align*}
		\Pro \c{\norm{\Sigma_n - \Sigma_N} > t} & \leq 2N \exp \p{- \min \p{\frac{t^2}{4V} \wedge \frac{t}{2c}}} \\
		& \leq 2N \exp \p{- \min \p{\frac{n^2 t^2}{4C^4J^2} \wedge \frac{nt}{2C^2J}}} \\
		& \leq 2N \exp \p{- \frac{1}{4} \min \p{ \p{\frac{nt}{C^2 J}}^2 \wedge \frac{nt}{C^2J}}}.
		\end{align*}
	\end{proof}
	
\end{document}